\newtheorem{theorem}			     {Theorem} [section]
\newtheorem{corollary}	  [theorem]	 {Corollary}	
\newtheorem{lemma}	      [theorem]  {Lemma}		
\theoremstyle{definition}
\newtheorem{remark} {Remark}
\newtheorem{assumptions}[theorem]{Assumptions}
\newcommand{\C}{\mathbb{C}}
\newcommand{\N}{\mathbb{N}}
\newcommand{\im}{\mathrm{Im}\,}
\newcommand{\re}{\mathrm{Re}\,}
\newcommand{\Ai}{{\rm Ai}}
\let\oldbibliography\thebibliography
\renewcommand{\thebibliography}[1]{\oldbibliography{#1}
\setlength{\itemsep}{-3pt}}
\numberwithin{equation}{section}
\def\ds{\displaystyle}
\def\bigO{{\cal O}}
\begin{document}
\title{The hard-to-soft edge transition: exponential moments, \\ central limit theorems and rigidity}
\author{Christophe Charlier and Jonatan Lenells}
\date{Department of Mathematics, KTH Royal Institute of Technology, \\
100 44 Stockholm, Sweden. \\ \smallskip
E-mail: cchar@kth.se, jlenells@kth.se}

\maketitle

\begin{abstract}
The local eigenvalue statistics of large random matrices near a hard edge transitioning into a soft edge are described by the Bessel process associated with a large parameter $\alpha$. For this point process, we obtain 1) exponential moment asymptotics, up to and including the constant term, 2) asymptotics for the expectation and variance of the counting function, 3) several central limit theorems and 4) a global rigidity upper bound. 
\end{abstract}

\bigskip

\noindent
{\small{\sc AMS Subject Classification (2020)}: 41A60, 60B20,  35Q15.}

\noindent
{\small{\sc Keywords}: Exponential moments, Bessel point process, Random matrix theory, Asymptotic analysis, Rigidity, Riemann--Hilbert problems, Airy point process.}


\section{Introduction and statement of results}

The hard-to-soft edge transition is a universal phenomenon which arises in a wide class of unitary invariant random matrix ensembles \cite{BorodinForrester}, but to make the explanation more concrete we are going to focus on Wishart matrices. Let $X$ denote an $n \times (n+\alpha)$ matrix whose elements are independent standard complex Gaussian random variables, and consider the matrix $XX^{*}$, where $X^*$ is the conjugate transpose of $X$. Such matrices were first studied in 1928 by Wishart \cite{Wishart}, and since then have found applications in various areas, such as numerical analysis \cite{Edelman}, information theory \cite{Telatar} and finance \cite{BaiSil}. Of particular interest is the local statistics of the smallest eigenvalues of $XX^{*}$ in the large dimensional limit. There are two distinct regimes that have been observed \cite{MarPas}. In the limit $n \to + \infty$ while $\alpha$ is kept fixed, the smallest eigenvalues accumulate near the ``hard edge" $0$, and after rescaling, a limiting point process arises \cite{For1,ForNag}, known as the Bessel point process (which depends on $\alpha$). On the other hand, if $\alpha/n$ tends to a positive constant as $n \to + \infty$, then the smallest eigenvalues are pushed away from $0$ and instead are located near a ``soft edge". In this case, after rescaling, they give rise to the Airy point process \cite{For1,ForNag} (which is independent of $\alpha$). The hard-to-soft edge transition corresponds to the regime where $\alpha/n \to 0$ but $\alpha \to + \infty$, and is described by the Bessel process associated with a large parameter $\alpha$. 

\medskip The above example of Wishart matrices only makes sense for $\alpha \in \mathbb{N}_{\geq 0}:=\{0,1,2,\ldots\}$, but in fact the Bessel process can be defined for all values $\alpha \in (-1,+\infty)$. For non-integer values of $\alpha$, it appears for example at the hard edge of the Laguerre Unitary Ensemble \cite{Vanlessen}.

\medskip It is known, see \cite[Theorem 1]{BorodinForrester} or \eqref{BoroForr scaling} below, that the gap probabilities of the Bessel process, when properly rescaled, converge as $\alpha \to +\infty$ to the gap probabilities of the Airy process. In this paper, we establish various other properties of the large $\alpha$ Bessel process. 

\medskip The Bessel point process is a determinantal point process on $[0,+\infty)$ whose kernel is given by
\begin{equation}\label{Bessel kernel}
K_{\alpha}^{\mathrm{Be}}(x,y) = \frac{J_{\alpha}(\sqrt{x})\sqrt{y}J_{\alpha}^{\prime}(\sqrt{y})-\sqrt{x}J_{\alpha}^{\prime}(\sqrt{x})J_{\alpha}(\sqrt{y})}{2(x-y)}, \qquad \alpha > -1,
\end{equation}
where $J_\alpha$ is the Bessel function of the first kind of order $\alpha$. As $\alpha$ increases, the Bessel process increasingly favors configurations whose points are further away from $0$. We refer to \cite{Soshnikov,Borodin,Johansson} for several surveys on determinantal point processes.


\medskip We emphasize that the Bessel process studied in this paper arises near hard edges of unitary invariant random matrix ensembles. The generalization of this process to the so-called $\beta$-ensembles has been introduced in \cite{RamirezRider} and is not determinantal. Its transition to the $\beta$ soft edge has been studied in \cite{RamirezRider, RamirezRider2, DLV2020}. We also mention that other types of hard-to-soft edge transitions than the one considered here have been studied in e.g. \cite{ClaeysKuijlaars, KuijMartFinWie, XDZ2014}.

\medskip Let us introduce the parameters
\begin{equation}\label{def of m vecu and vecx}
m \in \N_{>0}, \quad \vec{u}=(u_1,\dots,u_m) \in \mathbb{R}^{m} \quad \mbox{ and } \quad \vec{x} = (x_{1},\dots,x_{m}) \in \mathbb{R}_{\mathrm{ord}}^{+,m},
\end{equation}
where $\mathbb{R}_{\mathrm{ord}}^{+,m} := \{\vec{x} = (x_{1},\dots,x_{m}): 0 < x_1 < x_2 < \dots < x_m < +\infty\}$. Our main interest in this work lies in the generating function of the Bessel process, which can be written as the following exponential moments
\begin{equation}\label{def of E alpha}
E_{\alpha}(\vec{x},\vec{u}) := \mathbb{E}\Bigg[ \prod_{j=1}^{m} e^{u_{j}N_{\alpha}(x_{j})} \Bigg],
\end{equation}
where $N_{\alpha}(x)$ denotes the random variable that counts the number of points $\leq x$ in the Bessel process. It is known that $E_{\alpha}(\vec{x},\vec{u})$ can be naturally expressed in terms of the solution to a system of $m$ coupled Painlev\'{e} V equations \cite{TraWidLUE, ChDoe}. The goal of this paper is to obtain precise \textit{exponential moment asymptotics} for $E_{\alpha}(r\vec{x},\vec{u})$ as $r \to + \infty$ in the critical regime where $\alpha \to + \infty$. Exponential moment asymptotics have attracted considerable attention recently, partly due to their relevance for the global rigidity of the associated point process, see Section \ref{subsection:rigidity}. Such asymptotics have been obtained for various point processes, see \cite{BW1983, BB1995, BDIK2015, CharlierSine} for the sine process, \cite{BIP2019,CharlierBessel} for the Bessel process with $\alpha$ fixed (or bounded), \cite{BothnerBuckingham,ChCl3} for the Airy process, \cite{ChCl4} for the Meijer-$G$ and Wright's generalized Bessel process, and \cite{DXZ2020} for the Pearcey process. In this work, we obtain asymptotics for $E_{\alpha}(r\vec{x},\vec{u})$ as $r \to + \infty$ uniformly in $\alpha$ and $\vec{x}$ in a way that allows us to discuss the transition to the Airy exponential moment asymptotics, and to obtain a precise matching, up to and including the constant term, with the known exponential moment asymptotics from \cite{BIP2019,CharlierBessel} for the Bessel process with $\alpha$ bounded. More precisely, our main result is stated in Theorem \ref{thm:s1 neq 0} and describes the asymptotics of $E_{\alpha}(r\vec{x},\vec{u})$ in the following three regimes:
\begin{enumerate}
\item \label{item 1} $r \to + \infty$, $\alpha = a\sqrt{r} \to + \infty$ with $a,x_{1},\ldots,x_{m} > 0$ fixed (or mildly varying),
\item \label{item 2} $r \to + \infty$, $\alpha = a\sqrt{r} \to + \infty$, $a \to 0$ with $x_{1},\ldots,x_{m} > 0$ fixed (or mildly varying),
\item \label{item 3} $r \to + \infty$, $\alpha = a\sqrt{r} \to + \infty$, $a$ fixed, and the points $x_{j}$ converging sufficiently slowly to $a^{2}$.
\end{enumerate}
The second regime, in which $a \to 0$, requires a different (and more delicate) proof than the other two, and allows for a matching with the results of \cite{BIP2019,CharlierBessel} for the Bessel process with $\alpha$ bounded, see Corollary \ref{coro:conv to Bessel fixed alpha}. The third regime is needed to discuss the transition to the Airy exponential moment asymptotics obtained in \cite{BothnerBuckingham,ChCl3}, see Corollary \ref{coro:conv to Airy}. 

\medskip Theorem \ref{thm:s1 neq 0} encodes significant information about the hard-to-soft edge transition: we can deduce from it large $r$ asymptotics for the expectation and the variance of $N_{\alpha}(rx)$, see Corollary \ref{coro: s1>0 consequence of thm}, several central limit theorems (CLTs), see Corollaries \ref{coro: CLTs} and \ref{coro: CLT 2}, and an upper bound for the global rigidity of the process, see Theorem \ref{thm:rigidity Bessel}. 

\begin{theorem}[Exponential moment asymptotics for the large $\alpha$ Bessel process]\label{thm:s1 neq 0} 
\hspace{3cm}
\begin{enumerate}
\item\label{item 1 in thm} Let $m \in \mathbb{N}_{>0}$,  and let
\begin{align}\label{vec u and vec x in thm}
a \in (0,+\infty), \quad \vec{u}=(u_1,\dots,u_m) \in \mathbb{R}^{m} \quad \mbox{ and } \quad \vec{x} = (x_{1},\dots,x_{m}) \in \mathbb{R}_{\mathrm{ord}}^{+,m}, 
\end{align}
be such that
\begin{align*}
0 < x_{1} < \cdots < x_{n-1} < a^{2} < x_{n} < \cdots < x_{m},
\end{align*}
with $n \in \{1,\dots,m+1\}$, $x_{0}:=0$, $x_{m+1}:=+\infty$, and for $r>0$, define $\alpha = a \sqrt{r}$. As $r \to + \infty$, we have
\begin{multline}\label{F asymptotics thm s1 neq 0}
E_{\alpha}(r\vec{x},\vec{u}) = \exp \Bigg( \sum_{j=n}^{m}  u_{j} \mu_{\alpha}(rx_{j}) + \sum_{j=n}^{m} \frac{u_{j}^{2}}{2} \sigma_{\alpha}^{2}(rx_{j}) + \sum_{n \leq j < k \leq m} u_{j} u_{k} \Sigma_{a}(x_{k},x_{j})  \\ + \sum_{j=n}^{m} \log\big(G(1+\tfrac{u_{j}}{2\pi i})G(1-\tfrac{u_{j}}{2\pi i})\big) + \bigO \bigg( \frac{\log r}{\sqrt{r}} \bigg) \Bigg),
\end{multline}
where $G$ is Barnes' $G$-function, and $\mu_{\alpha}$, $\sigma_{\alpha}^{2}$ and $\Sigma_{a}$ are given by
\begin{align}
& \mu_{\alpha}(rx) = \frac{1}{\pi}\int_{\alpha^{2}}^{rx} \frac{\sqrt{u-\alpha^{2}}}{2u}du =\frac{\sqrt{r}}{\pi} \int_{a^{2}}^{x}\frac{\sqrt{u-a^{2}}}{2u}du, \label{def of mu} \\
& \sigma_{\alpha}^{2}(rx) = \frac{\log \big(4 (rx-\alpha^{2})^{3/2} (rx)^{-1}\big)}{2 \pi^{2}} = \frac{\log r}{4\pi^{2}} + \frac{\log \big(4 (x-a^{2})^{3/2} x^{-1} \big)}{2 \pi^{2}} , \label{def of sigma} \\
& \Sigma_{a}(x_{k},x_{j}) = \frac{1}{2\pi^{2}} \log\left(\frac{\sqrt{x_{j}-a^{2}}+\sqrt{x_{k}-a^{2}}}{|\sqrt{x_{j}-a^{2}}-\sqrt{x_{k}-a^{2}}|}\right). \label{def of Sigma}
\end{align}
Furthermore, the asymptotics \eqref{F asymptotics thm s1 neq 0} are uniform for $(x_{1},\ldots,x_{n-1},a^{2},x_{n},\ldots,x_{m})$ in compact subsets of $\mathbb{R}_{\mathrm{ord}}^{+,m+1}$, and uniform for $u_{1},\ldots,u_{m}$ in compact subsets of $\mathbb{R}$.
\item\label{item 2 in thm} Let $m \in \mathbb{N}_{>0}$, $\vec{u} \in \mathbb{R}^{m}$ and $\vec{x} \in \mathbb{R}_{\mathrm{ord}}^{+,m}$, and for $r>0$ and $a\in (0,x_{1})$, define $\alpha = a \sqrt{r}$. The asymptotics of $E_{\alpha}(r\vec{x},\vec{u})$ as $r \to + \infty$ and simultaneously $a \to 0$, $a\sqrt{r}\to + \infty$ are also given by the right-hand side of \eqref{F asymptotics thm s1 neq 0} (necessarily with $n=1$). Furthermore, these asymptotics are uniform for $u_{1},\ldots,u_{m}$ in compact subsets of $\mathbb{R}$, and uniform for $\vec{x}$ in compact subsets of $\mathbb{R}_{\mathrm{ord}}^{+,m}$.
\item\label{item 3 in thm} Let $m \in \mathbb{N}_{>0}$ and $a \in (0,+\infty)$ be fixed, let $\vec{u} \in \mathbb{R}^{m}$ and $\vec{x} \in \mathbb{R}_{\mathrm{ord}}^{+,m}$, and for $r>0$, define $\alpha = a \sqrt{r}$. Assume that $0 < a^{2} < x_{1} < \cdots < x_{m}$ and that the points $\{x_j\}_1^m$ tend to $a^2$ at the same sufficiently slow rate in the sense that the following hold:
\begin{subequations}\label{Airy regime in the main first thm}
\begin{align}
& |x_{m}-a^{2}| \to 0 \mbox{ as } r \to +\infty, 	
	\\ \label{Airy regime in the main first thm b}
& \frac{x_j - a^2}{x_m - a^2}  \text{ stays in a bounded subset of $(0,+\infty)$ for each $j = 1, \dots, m-1$},
	\\
& \frac{\log r}{|x_{m}-a^{2}|^{4}\sqrt{r}} \to 0 \mbox{ as } r \to +\infty.
\end{align}
\end{subequations}
In this regime, the asymptotics of $E_{\alpha}(r\vec{x},\vec{u})$ as $r \to + \infty$ are given by the right-hand side of \eqref{F asymptotics thm s1 neq 0} but with the error term $\bigO(\log r/\sqrt{r})$ replaced by
\begin{align}\label{error term Airy}
\bigO \bigg( \frac{\log r}{|x_{m}-a^{2}|^{4}\sqrt{r}} \bigg).
\end{align}
Furthermore, these asymptotics are also uniform for $u_{1},\ldots,u_{m}$ in compact subsets of $\mathbb{R}$.
\end{enumerate}
The asymptotic formulas in each of the above three regimes can be differentiated any number of times with respect to $u_{1},\ldots,u_{m}$ at the cost of increasing slightly the error term as follows. Let $\widetilde{E}_{\alpha}(r\vec{x},\vec{u})$ be the right-hand side of \eqref{F asymptotics thm s1 neq 0} without the error term and let $\mathcal{E}=\log E_{\alpha}(r\vec{x},\vec{u})- \log \widetilde{E}_{\alpha}(r\vec{x},\vec{u})$ be the error term. If $k_{1},\ldots,k_{m}\in \mathbb{N}_{\geq 0}$, $k=k_{1}+\ldots+k_{m}\geq 1$ and $\partial_{u}^{k}=\partial_{u_{1}}^{k_{1}}\ldots \partial_{u_{m}}^{k_{m}}$, then
\begin{align}
& \partial_{u}^{k} \mathcal{E}=\bigO \bigg( \frac{(\log r)^{k}}{\sqrt{r}} \bigg) & & \mbox{for regimes \ref{item 1 in thm} and \ref{item 2 in thm}}, \label{der of error in regimes 1 and 2} \\
& \partial_{u}^{k} \mathcal{E}=\bigO \bigg( \frac{(\log r)^{k}}{|x_{m}-a^{2}|^{4}\sqrt{r}} \bigg) & & \mbox{for regime \ref{item 3 in thm}}. \label{der of error in regime 3}
\end{align}
\end{theorem}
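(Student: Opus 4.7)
My plan is to follow the Riemann--Hilbert (RH) / Deift--Zhou nonlinear steepest descent framework that has become standard for exponential moment asymptotics of determinantal point processes, as used for instance in \cite{BIP2019,CharlierBessel,ChCl3,ChCl4,BothnerBuckingham}. The first step is to rewrite $E_{\alpha}(r\vec{x},\vec{u})$ as a Fredholm determinant of the Bessel kernel perturbed by piecewise constant factors: setting $v_{k}=\sum_{j\geq k}u_{j}$, one has
\begin{equation*}
E_{\alpha}(r\vec{x},\vec{u}) = \det\Big(I - \sum_{k=1}^{m}(1-e^{v_{k}})\, K_{\alpha}^{\mathrm{Be}}\, \chi_{(rx_{k-1},rx_{k}]}\Big),
\end{equation*}
and to attach to it a $2\times 2$ RH problem $Y=Y(z;r,\vec{x},\vec{u},\alpha)$ whose jump on $[0,+\infty)$ combines the Bessel weight with the $e^{v_{k}}$-factors and whose endpoint behavior at $0$ is of Bessel type. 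A standard differential identity (in $u_{j}$ or $x_{j}$) then expresses $\partial_{u_{j}}\log E_{\alpha}$ as a contour integral built from entries of $Y$ near $rx_{j}$.

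The core of the argument is a Deift--Zhou steepest descent analysis of $Y$ in the regime $\alpha=a\sqrt{r}$. The relevant equilibrium measure is supported on $[a^{2},+\infty)$ with density $\tfrac{1}{2\pi u}\sqrt{u-a^{2}}$ (precisely the density behind $\mu_{\alpha}$ in \eqref{def of mu}); I would introduce its $g$-function and perform the usual chain of transformations $Y\to T\to S\to R$: (a) normalization at infinity via $g$; (b) opening of lenses along $[a^{2},+\infty)$; (c) matching against a global parametrix built from the square-root behavior of the density at $a^{2}$; and (d) local parametrices---a Bessel parametrix near $z=0$, an Airy parametrix near $z=ra^{2}$, and a confluent hypergeometric (CHF) parametrix at each $rx_{k}$ to absorb the jump $e^{v_{k}}$. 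The small-norm problem for $R$ then yields asymptotics of $Y$. Substituting into the differential identity and integrating from $\vec{u}=0$ (where $E_{\alpha}=1$) reconstructs \eqref{F asymptotics thm s1 neq 0}: the $\mu_{\alpha}$ term comes from the equilibrium density, the $\sigma_{\alpha}^{2}$ and $\Sigma_{a}$ terms from the diagonal and off-diagonal interactions of the CHF parametrices through the global parametrix, and the Barnes-$G$ factors from exact integration of each CHF self-interaction, in the style of classical Basor--Widom/Ehrhardt computations.

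The main obstacle, and the reason regime \ref{item 2 in thm} requires a separate treatment, is that as $a\to 0$ the soft edge at $ra^{2}$ collides with the hard edge at $0$, so the Airy parametrix near $ra^{2}$ and the Bessel parametrix near $0$ can no longer be separated. To circumvent this I would construct a single unified local parametrix covering a full neighborhood $[0,\varepsilon r]$ of the origin, valid whenever $a\sqrt{r}\to+\infty$, built from the model RH problem solvable in terms of Bessel functions of the large order $\alpha$---these Bessel functions interpolating to Airy functions via the classical uniform asymptotics and thereby matching regime \ref{item 1 in thm} in one direction and the $\alpha$-bounded Bessel results of \cite{BIP2019,CharlierBessel} in the other. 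Controlling the matching of this unified parametrix with the global parametrix uniformly as $a\to 0$ is the delicate technical point of the whole proof. Regime \ref{item 3 in thm} is the dual degeneration in which the CHF parametrices at the points $rx_{k}$ merge with the Airy parametrix at $ra^{2}$; there I would run the steepest descent in the Airy-scaled variable $\zeta=r^{2/3}(z/r-a^{2})$ and keep the dependence on $|x_{m}-a^{2}|$ explicit, which is what produces the weaker error \eqref{error term Airy}.

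Finally, the differentiability statements \eqref{der of error in regimes 1 and 2}--\eqref{der of error in regime 3} follow from a standard Cauchy-formula argument: the entire steepest descent analysis is analytic in $\vec{u}$ on a complex polydisc around any real compact set, with $\mathcal{E}$ uniformly bounded there by the stated quantities. Choosing the polydisc radii proportional to $1/\log r$ and applying Cauchy's estimates $k$ times produces the factor $(\log r)^{k}$ in the claimed bounds.
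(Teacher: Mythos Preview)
Your overall strategy---RH formulation, Deift--Zhou steepest descent with the $g$-function tied to the density $\tfrac{1}{2\pi u}\sqrt{u-a^{2}}$, Airy parametrix at the soft edge, confluent hypergeometric parametrices at the discontinuities, integration of a differential identity---is exactly what the paper does. A few points of divergence are worth noting. In regime~\ref{item 1 in thm} the paper needs \emph{no} parametrix near the origin: after the $g$-function normalization the jumps on $(-a^{2},0)$ are already exponentially small, so only an Airy parametrix at $-a^{2}$ and CHF parametrices at those $-x_{j}$ with $x_{j}>a^{2}$ are required (at the $-x_{j}$ with $x_{j}<a^{2}$ a trivial explicit parametrix suffices). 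Your ``Bessel parametrix near $0$'' in that regime is therefore redundant. For regime~\ref{item 2 in thm} your idea of a single parametrix near the origin built out of Bessel functions of large order $\alpha$ is precisely the paper's construction of $P^{(0)}$ via $\Phi_{\mathrm{Be}}(\tfrac{rz}{4};\alpha)$ and its uniform large-$\alpha$ asymptotics. The most substantive difference is regime~\ref{item 3 in thm}: rather than passing to an Airy-scaled variable and devising a new model problem that merges the Airy edge with the CHF discontinuities, the paper simply reruns the regime~\ref{item 1 in thm} analysis while letting the disk radii $\delta$ shrink proportionally to $|x_{m}-a^{2}|$ and tracking the resulting powers of $\delta$ through the small-norm estimates; this is considerably simpler and directly produces the $\delta^{-4}$ in \eqref{error term Airy}. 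Finally, for the differentiability statements the paper does not use a Cauchy-estimate argument but instead differentiates the entire steepest-descent chain in the parameters $\beta_{j}=u_{j}/(2\pi i)$, observing that each $\partial_{\beta_{j}}$ hits factors $r^{\pm\beta_{j}}$ in the CHF parametrices and so contributes one power of $\log r$; your Cauchy approach with polydisc radii $\asymp 1/\log r$ would give the same bounds.
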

\begin{remark}
If $a^{2}>x_{m}$, i.e. $n=m+1$, then each sum in \eqref{F asymptotics thm s1 neq 0} should be interpreted as $0$.
\end{remark}
As stated in part \ref{item 2 in thm} of Theorem \ref{thm:s1 neq 0}, the error term in \eqref{F asymptotics thm s1 neq 0} does not get worse as $a \to 0$. It is however important in our proof that $\alpha = a\sqrt{r} \to + \infty$. The asymptotics of $E_{\alpha}(r\vec{x},\vec{u})$ as $r \to + \infty$ with $\alpha$ bounded are not covered by Theorem \ref{thm:s1 neq 0}, but they have been obtained in \cite[eq (1.35)]{BIP2019} for $m=1$ and in \cite[Theorem 1.1]{CharlierBessel} for $m \geq 2$. These asymptotics are given by the right-hand side of \eqref{F asymptotics thm s1 neq 0} with $n=1$ and with $\mu_{\alpha}$, $\sigma_{\alpha}^{2}$ and $\Sigma_{a}$ replaced by 
\begin{align}\label{def of mu sigma cov alpha bounded}
& \tilde{\mu}_{\alpha}(rx) = \frac{\sqrt{rx}}{\pi}-\frac{\alpha}{2}, & & \tilde{\sigma}_{\alpha}^{2}(rx) = \frac{\log (4\sqrt{rx})}{2\pi^{2}}, & & \widetilde{\Sigma}(x_{k},x_{j}) = \frac{1}{2\pi^{2}} \log\left(\frac{\sqrt{x_{j}}+\sqrt{x_{k}}}{|\sqrt{x_{j}}-\sqrt{x_{k}}|}\right),
\end{align}
respectively. Corollary \ref{coro:conv to Bessel fixed alpha} below shows that Theorem \ref{thm:s1 neq 0} matches explicitly with \cite[Theorem 1.1]{CharlierBessel} in the regime $a \to 0$, $\alpha=a\sqrt{r}\to + \infty$, up to an error $\bigO \big( \frac{\log r}{\sqrt{r}} + a^{2}\sqrt{r} \big)$. In other words, Theorem \ref{thm:s1 neq 0} and \cite[Theorem 1.1]{CharlierBessel} taken together describe the asymptotics of $E_{a\sqrt{r}}(r\vec{x},\vec{u})$ as $r \to + \infty$, up to and including the constant, uniformly for $a \in [\frac{\tilde{\alpha}}{\sqrt{r}},a_{0}]$ for any fixed $a_{0} \in (0,x_{1})$ and fixed $\tilde{\alpha} \in (-1,+\infty)$. 
\begin{corollary}[Matching with the Bessel exponential moment asymptotics with $\alpha$ bounded]\label{coro:conv to Bessel fixed alpha}
Let $m \in \mathbb{N}_{>0}$, $\vec{u} \in \mathbb{R}^{m}$ and $\vec{x}\in \mathbb{R}_{\mathrm{ord}}^{+,m}$, and for $r>0$ and $a>0$, define $\alpha = a \sqrt{r}$. The asymptotics of $E_{\alpha}(r\vec{x},\vec{u})$ as $r \to + \infty$ and simultaneously $a \to 0$, $a\sqrt{r}\to + \infty$ can be written as
\begin{multline}\label{F asymptotics thm s1 neq 0 Bessel alpha bounded}
E_{\alpha}(r\vec{x},\vec{u}) = \exp \Bigg( \sum_{j=1}^{m}  u_{j} \tilde{\mu}_{\alpha}(rx_{j}) + \sum_{j=1}^{m} \frac{u_{j}^{2}}{2} \tilde{\sigma}_{\alpha}^{2}(rx_{j}) + \sum_{1 \leq j < k \leq m} u_{j} u_{k} \widetilde{\Sigma}(x_{k},x_{j})  \\ + \sum_{j=1}^{m} \log G(1+\tfrac{u_{j}}{2\pi i})G(1-\tfrac{u_{j}}{2\pi i}) + \bigO \bigg( \frac{\log r}{\sqrt{r}} \bigg) + \bigO \big( a^{2}\sqrt{r} \big) \Bigg),
\end{multline}
where $\tilde{\mu}_{\alpha}$, $\tilde{\sigma}_{\alpha}^{2}$, $\widetilde{\Sigma}$ are defined in \eqref{def of mu sigma cov alpha bounded}. Furthermore, these asymptotics are uniform for $u_{1},\ldots,u_{m}$ in compact subsets of $\mathbb{R}$, and uniform for $\vec{x}$ in compact subsets of $\mathbb{R}_{\mathrm{ord}}^{+,m}$.
\end{corollary}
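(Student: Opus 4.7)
The plan is to deduce Corollary \ref{coro:conv to Bessel fixed alpha} directly from Theorem \ref{thm:s1 neq 0}(\ref{item 2 in thm}) by comparing the three ``$a$-dependent'' functions $\mu_{\alpha}, \sigma_{\alpha}^{2}, \Sigma_{a}$ with their ``bounded-$\alpha$'' counterparts $\tilde{\mu}_{\alpha}, \tilde{\sigma}_{\alpha}^{2}, \widetilde{\Sigma}$ of \eqref{def of mu sigma cov alpha bounded} as $a\to 0$. Note first that since $a\to 0$ and $x_{1}$ stays in a compact subset of $(0,+\infty)$, eventually $a^{2}<x_{1}$, so the index $n$ in \eqref{F asymptotics thm s1 neq 0} equals $1$ and all three sums in \eqref{F asymptotics thm s1 neq 0} already run over $j=1,\dots,m$, matching \eqref{F asymptotics thm s1 neq 0 Bessel alpha bounded}.

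The first (and essentially only non-trivial) step is to compute $\mu_{\alpha}(rx)$ in closed form using the antiderivative
\begin{equation*}
\int \frac{\sqrt{u-a^{2}}}{2u}\,du = \sqrt{u-a^{2}} - a\,\arctan\!\left(\frac{\sqrt{u-a^{2}}}{a}\right),
\end{equation*}
which gives $\mu_{\alpha}(rx) = \frac{\sqrt{r}}{\pi}\bigl[\sqrt{x-a^{2}} - a\arctan(\sqrt{x-a^{2}}/a)\bigr]$. Taylor expanding $\sqrt{x-a^{2}} = \sqrt{x} - \frac{a^{2}}{2\sqrt{x}} + O(a^{4})$ and $\arctan(\sqrt{x-a^{2}}/a) = \frac{\pi}{2} - \frac{a}{\sqrt{x}} + O(a^{3})$ as $a\to 0$, the leading terms combine to $\frac{\sqrt{rx}}{\pi} - \frac{a\sqrt{r}}{2} = \tilde{\mu}_{\alpha}(rx)$, leaving
\begin{equation*}
\mu_{\alpha}(rx) - \tilde{\mu}_{\alpha}(rx) = O(a^{2}\sqrt{r})
\end{equation*}
uniformly for $x$ in a compact subset of $(0,+\infty)$.

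The other two comparisons are essentially immediate: from \eqref{def of sigma} and \eqref{def of mu sigma cov alpha bounded} one reads off
\begin{equation*}
\sigma_{\alpha}^{2}(rx)-\tilde{\sigma}_{\alpha}^{2}(rx) = \frac{3}{4\pi^{2}}\log\!\left(1-\frac{a^{2}}{x}\right) = O(a^{2}),
\end{equation*}
while $\sqrt{x_{j}-a^{2}} = \sqrt{x_{j}} + O(a^{2})$ together with the fact that $|\sqrt{x_{j}}-\sqrt{x_{k}}|$ stays bounded away from $0$ on compact subsets of $\mathbb{R}_{\mathrm{ord}}^{+,m}$ gives $\Sigma_{a}(x_{k},x_{j}) - \widetilde{\Sigma}(x_{k},x_{j}) = O(a^{2})$, both uniformly in the stated parameters.

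Finally, one substitutes these three expansions into the exponent on the right-hand side of \eqref{F asymptotics thm s1 neq 0}. Since $u_{1},\dots,u_{m}$ stay in a compact subset of $\mathbb{R}$, the accumulated perturbation is
\begin{equation*}
\sum_{j=1}^{m} u_{j}\cdot O(a^{2}\sqrt{r}) + \sum_{j=1}^{m}\tfrac{u_{j}^{2}}{2}\cdot O(a^{2}) + \sum_{j<k} u_{j}u_{k}\cdot O(a^{2}) = O(a^{2}\sqrt{r}),
\end{equation*}
where the $O(a^{2})$ contributions are absorbed into $O(a^{2}\sqrt{r})$ because $a\sqrt{r}\to +\infty$. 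Adding this to the $O(\log r/\sqrt{r})$ error term from Theorem \ref{thm:s1 neq 0}(\ref{item 2 in thm}) yields \eqref{F asymptotics thm s1 neq 0 Bessel alpha bounded}. There is no real obstacle here: the proof is purely a bookkeeping of elementary Taylor expansions, with the only mildly delicate point being the closed-form evaluation of $\mu_{\alpha}$ and the expansion of $\arctan$ near $+\infty$ (which is what produces the constant term $-a\sqrt{r}/2$ in $\tilde{\mu}_{\alpha}$).
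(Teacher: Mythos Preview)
Your proof is correct and follows the same approach as the paper: both deduce the corollary from Theorem \ref{thm:s1 neq 0}(\ref{item 2 in thm}) by establishing the three expansions $\mu_{\alpha}(rx) = \tilde{\mu}_{\alpha}(rx) + O(a^{2}\sqrt{r})$, $\sigma_{\alpha}^{2}(rx) = \tilde{\sigma}_{\alpha}^{2}(rx) + O(a^{2})$, and $\Sigma_{a}(x_{k},x_{j}) = \widetilde{\Sigma}(x_{k},x_{j}) + O(a^{2})$, and then substituting into \eqref{F asymptotics thm s1 neq 0}. The paper merely states these expansions without derivation, while you supply the explicit antiderivative and Taylor computations; otherwise the arguments are identical.
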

\begin{proof}
Since the error term in Theorem \ref{thm:s1 neq 0} is uniform as $a \to 0$, the claim follows after letting $a \to 0$ in \eqref{def of mu}--\eqref{def of Sigma} with $n = 1$, and noting that
\begin{align}\label{asymptotics as a is small mean var cov}
& \hspace{-0.2cm} \mu_{\alpha}(rx) = \tilde{\mu}_{\alpha}(rx) + \bigO(a^{2}\sqrt{r}), \; \sigma_{\alpha}^{2}(rx) = \tilde{\sigma}_{\alpha}^{2}(rx) + \bigO( a^{2} ), \; \Sigma_{a}(x_{k},x_{j}) = \widetilde{\Sigma}(x_{k},x_{j}) + \bigO( a^{2} )
\end{align}
uniformly for $\vec{x}$ in compact subsets of $\mathbb{R}_{\mathrm{ord}}^{+,m}$ and for $r \geq 1$.
\end{proof}
The Airy process is a determinantal point process on $\mathbb{R}$ whose kernel is given by
\begin{align*}
K_{\Ai}(x, y) = \frac{ \Ai(x) \Ai'(y) - \Ai'(x) \Ai(y) }{ x - y }, \qquad x,y \in \mathbb{R},
\end{align*}
where $\Ai$ denotes the Airy function. It has been proved by Borodin and Forrester in \cite[Theorem 1]{BorodinForrester} that 
\begin{align}\label{BoroForr scaling}
\mathbb{P}_{\mathrm{Bessel}}\Big[ \mbox{there is a gap on }[0,\alpha^{2}+2^{\frac{2}{3}}\alpha^{\frac{4}{3}}y+\bigO(\alpha)] \Big] \to \mathbb{P}_{\mathrm{Airy}}\Big[ \mbox{there is a gap on }[-y,+\infty] \Big]
\end{align}
as $\alpha \to \infty$, for any $y \in \mathbb{R}$. Here ``a gap" on an interval $A \subset \mathbb{R}$ means that no points in the process fall in $A$. It is important to note that $y$ is fixed in \eqref{BoroForr scaling}.
In Corollary \ref{coro:conv to Airy} below, we show an analogue of \eqref{BoroForr scaling} at the level of the exponential moment asymptotics, i.e. we consider the double scaling limit where both $\alpha \to +\infty$ and simultaneously $y \to + \infty$. Let $N_{\mathrm{Ai}}(y)$ denote the random variable that counts the number of points in $[-y,+\infty)$ in the Airy process, $y \in \mathbb{R}$, and consider the exponential moment 
\begin{align*}
E_{\mathrm{Ai}}(\vec y,\vec u):=\mathbb{E}\Bigg[\prod_{j = 1}^m e^{u_j N_{\mathrm{Ai}}(y_j)}\Bigg], \qquad \vec{y} = (y_{1},\ldots,y_{m}) \in \mathbb{R}^{m}, \; \vec{u} = (u_{1},\ldots,u_{m}) \in \mathbb{R}^{m}.
\end{align*} 
The asymptotics for $E_{\mathrm{Ai}}(r_{\mathrm{Ai}}\vec y,\vec u)$ as $r_{\mathrm{Ai}} \to + \infty$ have been obtained in \cite{BothnerBuckingham} for $m=1$ and in \cite[Theorem 1.1]{ChCl3} for $m \geq 2$, and are as follows: 
\begin{multline}\label{F asymptotics thm s1 neq 0 Airy outside of thm}
E_{\mathrm{Ai}}(r_{\mathrm{Ai}}\vec y,\vec u) = \exp \Bigg( \sum_{j=1}^{m}  u_{j} \mu_{\mathrm{Ai}}(r_{\mathrm{Ai}}y_{j}) + \sum_{j=1}^{m} \frac{u_{j}^{2}}{2} \sigma_{\mathrm{Ai}}^{2}(r_{\mathrm{Ai}}y_{j}) + \sum_{1 \leq j < k \leq m} u_{j} u_{k} \Sigma_{\mathrm{Ai}}(y_{k},y_{j})  \\ + \sum_{j=1}^{m} \log G(1+\tfrac{u_{j}}{2\pi i})G(1-\tfrac{u_{j}}{2\pi i}) + \bigO \bigg( \frac{\log r_{\mathrm{Ai}}}{r_{\mathrm{Ai}}^{3/2}} \bigg) \Bigg),
\end{multline}
where
\begin{align}\label{def of mean var cov Airy}
& \mu_{\mathrm{Ai}}(y) = \frac{2}{3\pi}y^{3/2}, & & \sigma_{\mathrm{Ai}}^{2}(y) = \frac{3}{4\pi^2}\log (4y), & & \Sigma_{\mathrm{Ai}}(y_{k},y_{j}) = \frac{1}{2\pi^{2}} \log\left(\frac{\sqrt{y_{j}}+\sqrt{y_{k}}}{|\sqrt{y_{j}}-\sqrt{y_{k}}|}\right).
\end{align}
Furthermore, the error term in \eqref{F asymptotics thm s1 neq 0 Airy outside of thm} is uniform for $u_{1},\ldots,u_{m}$ in compact subsets of $\mathbb{R}$. 
The result \eqref{BoroForr scaling} suggests that the asymptotics of $E_{\alpha}(r\vec{x},\vec{u})$ as $r \to + \infty$, $\alpha \to + \infty$ and simultaneously 
\begin{align*}
& rx_{j} = \alpha^{2} + 2^{\frac{2}{3}} \alpha^{\frac{4}{3}} r_{\mathrm{Ai}}y_{j}, \qquad j=1,\ldots,m, & & 0<y_{1}< \cdots<y_{m},
\end{align*}
with $r_{\mathrm{Ai}} \to + \infty$ at a sufficiently slow speed should somehow be related to the asymptotics of $E_{\mathrm{Ai}}(r_{\mathrm{Ai}}\vec y,\vec u)$. Corollary \ref{coro:conv to Airy} below confirms this expectation. 
\begin{corollary}[Matching with the Airy exponential moment asymptotics]\label{coro:conv to Airy}
Let $m \in \mathbb{N}_{>0}$, $a \in (0,+\infty)$ and $0<y_{1}< \cdots<y_{m}$ be fixed. For $r>0$, define $\alpha = a \sqrt{r}$, and let $\vec{u} \in \mathbb{R}^{m}$ and $\vec{x} \in \mathbb{R}_{\mathrm{ord}}^{+,m}$ be such that
\begin{align}\label{scaling inside coro Airy 1}
& x_{j} = \frac{\alpha^{2}}{r} + 2^{\frac{2}{3}}  \frac{\alpha^{\frac{4}{3}}r_{\mathrm{Ai}}}{r}y_{j} = a^{2} + 2^{\frac{2}{3}}  \frac{a^{\frac{4}{3}}r_{\mathrm{Ai}}}{r^{\frac{1}{3}}}y_{j}, \qquad j=1,\ldots,m, \qquad  r_{\mathrm{Ai}}>0.
\end{align}
As $r \to + \infty$ and simultaneously
\begin{align}\label{scaling inside coro Airy 2}
M_r r^{\frac{5}{24}}(\log r)^{\frac{1}{4}} \leq r_{\Ai} \leq \frac{1}{M_r} r^{\frac{1}{3}}
\end{align}
where $M_r > 0$ tends to infinity at an arbitrarily slow rate as $r \to +\infty$, we have
\begin{multline}\label{F asymptotics thm s1 neq 0 Airy}
E_{\alpha}(r\vec{x},\vec{u}) = \exp \Bigg( \sum_{j=1}^{m}  u_{j} \mu_{\mathrm{Ai}}(r_{\mathrm{Ai}}y_{j})\bigg( 1+\bigO\Big(\frac{r_{\mathrm{Ai}}}{r^{\frac{1}{3}}}\Big) \bigg)  + \sum_{j=1}^{m} \frac{u_{j}^{2}}{2} \bigg( \sigma_{\mathrm{Ai}}^{2}(r_{\mathrm{Ai}}y_{j}) +\bigO\Big(\frac{r_{\mathrm{Ai}}}{r^{\frac{1}{3}}}\Big) \bigg)   \\ + \sum_{1 \leq j < k \leq m} u_{j} u_{k} \Sigma_{\mathrm{Ai}}(y_{k},y_{j}) + \sum_{j=1}^{m} \log G(1+\tfrac{u_{j}}{2\pi i})G(1-\tfrac{u_{j}}{2\pi i}) + \bigO \bigg( \frac{r^{\frac{5}{6}}\log r}{r_{\mathrm{Ai}}^{4}} \bigg) \Bigg).
\end{multline}
Furthermore, the error term is uniform for $u_{1},\ldots,u_{m}$ in compact subsets of $\mathbb{R}$.
\end{corollary}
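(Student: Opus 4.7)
The plan is to reduce the corollary to part \ref{item 3 in thm} of Theorem~\ref{thm:s1 neq 0} and then Taylor-expand the quantities $\mu_{\alpha}(rx_{j})$, $\sigma_{\alpha}^{2}(rx_{j})$ and $\Sigma_{a}(x_{k},x_{j})$ in the small parameter
\begin{align*}
\delta_{j} := \frac{x_{j}-a^{2}}{a^{2}} = \frac{2^{2/3}\,r_{\mathrm{Ai}}}{a^{2/3}\,r^{1/3}}\,y_{j}, \qquad j=1,\ldots,m,
\end{align*}
which tends to $0$ under \eqref{scaling inside coro Airy 2}.

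First I would verify that the hypotheses \eqref{Airy regime in the main first thm} of regime~\ref{item 3 in thm} are satisfied. The upper bound $r_{\mathrm{Ai}}\le M_{r}^{-1}r^{1/3}$ ensures $|x_{m}-a^{2}|=2^{2/3}a^{4/3}r_{\mathrm{Ai}}y_{m}/r^{1/3}\to 0$; the ratios $(x_{j}-a^{2})/(x_{m}-a^{2})=y_{j}/y_{m}$ are fixed constants in $(0,+\infty)$; and a direct computation gives
\begin{align*}
\frac{\log r}{|x_{m}-a^{2}|^{4}\sqrt{r}} = \bigO\!\bigg(\frac{r^{5/6}\log r}{r_{\mathrm{Ai}}^{4}}\bigg),
\end{align*}
which tends to $0$ because the lower bound $r_{\mathrm{Ai}}\ge M_{r}\,r^{5/24}(\log r)^{1/4}$ gives $r_{\mathrm{Ai}}^{4}\ge M_{r}^{4}\,r^{5/6}\log r$. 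Theorem~\ref{thm:s1 neq 0}(\ref{item 3 in thm}) then applies with $n=1$ and yields the asymptotics of $\log E_{\alpha}(r\vec{x},\vec{u})$ with error $\bigO(r^{5/6}\log r/r_{\mathrm{Ai}}^{4})$, matching the last error term in \eqref{F asymptotics thm s1 neq 0 Airy}.

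Next I would expand the three quantities. Substituting $u=a^{2}(1+s)$ in \eqref{def of mu},
\begin{align*}
\mu_{\alpha}(rx_{j}) = \frac{a\sqrt{r}}{2\pi}\int_{0}^{\delta_{j}}\frac{\sqrt{s}}{1+s}\,ds = \frac{a\sqrt{r}}{3\pi}\,\delta_{j}^{3/2}\bigl(1+\bigO(\delta_{j})\bigr),
\end{align*}
and since $\delta_{j}^{3/2}=2\,r_{\mathrm{Ai}}^{3/2}y_{j}^{3/2}/(a\sqrt{r})$, this collapses to $\mu_{\alpha}(rx_{j})=\mu_{\mathrm{Ai}}(r_{\mathrm{Ai}}y_{j})\bigl(1+\bigO(r_{\mathrm{Ai}}/r^{1/3})\bigr)$. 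For the variance, inserting $(x_{j}-a^{2})^{3/2}=2a^{2}r_{\mathrm{Ai}}^{3/2}y_{j}^{3/2}/\sqrt{r}$ and $x_{j}^{-1}=a^{-2}(1+\bigO(\delta_{j}))$ into \eqref{def of sigma} makes the $\log r$ contributions cancel and leaves $\sigma_{\alpha}^{2}(rx_{j})=\sigma_{\mathrm{Ai}}^{2}(r_{\mathrm{Ai}}y_{j})+\bigO(r_{\mathrm{Ai}}/r^{1/3})$. For the covariance, $\sqrt{x_{j}-a^{2}}=2^{1/3}a^{2/3}(r_{\mathrm{Ai}}/r^{1/3})^{1/2}\sqrt{y_{j}}$, so the common prefactor cancels in the ratio in \eqref{def of Sigma} and $\Sigma_{a}(x_{k},x_{j})=\Sigma_{\mathrm{Ai}}(y_{k},y_{j})$ \emph{exactly}. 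Assembling these three identities inside the exponential in \eqref{F asymptotics thm s1 neq 0} (with $n=1$) and noting that the Barnes-$G$ terms are identical gives \eqref{F asymptotics thm s1 neq 0 Airy}; the uniformity in $u_{1},\ldots,u_{m}$ on compacts is inherited directly from Theorem~\ref{thm:s1 neq 0}. There is no real obstacle beyond bookkeeping here: the only mildly delicate point is that the two bounds in \eqref{scaling inside coro Airy 2} are precisely what is needed to make, respectively, the regime-3 error $r^{5/6}\log r/r_{\mathrm{Ai}}^{4}$ and the Taylor error $r_{\mathrm{Ai}}/r^{1/3}$ both vanish as $r\to+\infty$.
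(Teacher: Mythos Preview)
Your proposal is correct and follows essentially the same approach as the paper: apply part~\ref{item 3 in thm} of Theorem~\ref{thm:s1 neq 0} and then Taylor-expand $\mu_{\alpha}$, $\sigma_{\alpha}^{2}$, $\Sigma_{a}$ in the small parameter $r_{\mathrm{Ai}}/r^{1/3}$ (equivalently $r_{\mathrm{Ai}}/\alpha^{2/3}$, since $a$ is fixed). Your write-up is in fact more explicit than the paper's own proof, which simply records the three expansions \eqref{asymp mean var cov airy} and invokes the theorem; your added verification of the regime-3 hypotheses and the detailed calculation of each expansion are all correct bookkeeping.
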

\begin{proof}
Substituting $rx=\alpha^{2}+2^{\frac{2}{3}}\alpha^{\frac{4}{3}}r_{\mathrm{Ai}}y$ in \eqref{def of mu}--\eqref{def of Sigma} and letting $\frac{r_{\mathrm{Ai}}}{\alpha^{\frac{2}{3}}} \to 0$, we get
\begin{align}\label{asymp mean var cov airy}
& \hspace{-0.2cm} \mu_{\alpha}(rx) = \mu_{\mathrm{Ai}}(r_{\mathrm{Ai}}y)\bigg( 1+\bigO\Big(\frac{r_{\mathrm{Ai}}}{\alpha^{\frac{2}{3}}}\Big) \bigg), \; \sigma_{\alpha}^{2}(rx) = \sigma_{\mathrm{Ai}}^{2}(r_{\mathrm{Ai}}y) +\bigO\Big(\frac{r_{\mathrm{Ai}}}{\alpha^{\frac{2}{3}}}\Big), \; \Sigma_{a}(x_{k},x_{j}) = \Sigma_{\mathrm{Ai}}(y_{k},y_{j}).
\end{align}
The claim now follows directly from part \ref{item 3 in thm} of Theorem \ref{thm:s1 neq 0}.
\end{proof}
\subsection{Large $r$ asymptotics of $\mathbb{E}[N_{\alpha}(rx_{1})]$, $\mathrm{Var}[N_{\alpha}(rx_{1})]$ and $\mathrm{Cov}[ N_{\alpha}(rx_{1}),N_{\alpha}(rx_{2}) ]$}
It is directly seen from \eqref{def of E alpha} with $m=1$ and $m=2$ that
\begin{align}
& \partial_u \log E_{\alpha}(x_{1},u)|_{u=0} = \mathbb{E}[N_{\alpha}(x_{1})], \quad \partial_u^2 \log E_{\alpha}(x_{1},u)|_{u=0} = \mathrm{Var}[N_{\alpha}(x_{1})],
 \label{asymp u to 0 1}
 	\\
& \partial_u^2 \log\bigg( \frac{E_{\alpha}\big((x_{1},x_{2}),(u,u)\big)}{E_{\alpha}(x_{1},u)E_{\alpha}(x_{2},u)}\bigg)\bigg|_{u=0} = 2\, \mbox{Cov}(N_{\alpha}(x_{1}),N_{\alpha}(x_{2})). \label{asymp u to 0 2}
\end{align}
Recall that the asymptotics of $E_{\alpha}(rx_{1},u)$ and $E_{\alpha}\big((rx_{1},rx_{2}),(u,u)\big)$ as $r \to + \infty, \alpha \to + \infty$, which are given by Theorem \ref{thm:s1 neq 0} with $m=1$ and $m=2$, can be differentiated at the cost of increasing the error term as in \eqref{der of error in regimes 1 and 2}--\eqref{der of error in regime 3}. Hence, from \eqref{asymp u to 0 1}-\eqref{asymp u to 0 2}, we obtain the following asymptotics for $\mathbb{E}[N_{\alpha}(rx_{1})]$, $\mathrm{Var}[N_{\alpha}(rx_{1})]$ and $\mathrm{Cov}[ N_{\alpha}(rx_{1}),N_{\alpha}(rx_{2}) ]$ as $r \to + \infty, \alpha \to + \infty$.

\begin{corollary}\label{coro: s1>0 consequence of thm}
Let $0<x_{1}<x_{2}$ be fixed, let $a \in (0,x_{1})$, and for $r>0$, define $\alpha = a\sqrt{r}$. As $r \to + \infty$, we have
\begin{align}
& \mathbb{E}[N_{\alpha}(rx_{1})] = \mu_{\alpha}(rx_{1}) + \bigO \bigg( \frac{\log r}{\sqrt{r}} \bigg) = \frac{\sqrt{r}}{\pi} \int_{a^{2}}^{x_{1}}\frac{\sqrt{u-a^{2}}}{2u}du + \bigO \bigg( \frac{\log r}{\sqrt{r}} \bigg), \label{expected and variance asymp 1} \\
& \mathrm{Var}[N_{\alpha}(rx_{1})] = \sigma_{\alpha}^{2}(rx_{1}) + \frac{1 + \gamma_{\mathrm{E}}}{2 \pi^{2}} + \bigO \bigg( \frac{\log^{2} r}{\sqrt{r}} \bigg) \nonumber \\
& \hspace{2cm} = \frac{\log r}{4 \pi^{2}} + \frac{1 + \log \big(4 (x_{1}-a^{2})^{3/2} x_{1}^{-1} \big) + \gamma_{\mathrm{E}}}{2 \pi^{2}} + \bigO \bigg( \frac{\log^{2} r}{\sqrt{r}} \bigg), \label{expected and variance asymp 2} \\
& \mathrm{Cov}[ N_{\alpha}(rx_{1}),N_{\alpha}(rx_{2}) ] = \Sigma_{a}(x_{2},x_{1}) + \bigO \bigg( \frac{\log^{2} r}{\sqrt{r}} \bigg) = \frac{1}{2\pi^{2}} \log \left(\frac{\sqrt{x_{2}-a^{2}}+\sqrt{x_{1}-a^{2}}}{\sqrt{x_{2}-a^{2}}-\sqrt{x_{1}-a^{2}}}\right) + \bigO \bigg( \frac{\log^{2} r}{\sqrt{r}} \bigg), \nonumber
\end{align}
where $\gamma_{\mathrm{E}} \approx 0.5772$ is Euler's gamma constant. Furthermore, these asymptotics are uniform for $a$ in compact subsets of $[0,x_{1})$.
\end{corollary}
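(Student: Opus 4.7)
The plan is to deduce the three asymptotics from Theorem \ref{thm:s1 neq 0} by differentiating $\log E_{\alpha}$ in $u$ at $u=0$ via the identities \eqref{asymp u to 0 1}--\eqref{asymp u to 0 2}. Since the corollary requires uniformity for $a$ in compact subsets of $[0, x_{1})$, I would combine part \ref{item 1 in thm} of the theorem (with $n = 1$), which gives uniform asymptotics whenever $a$ is bounded away from $0$ and from $\sqrt{x_{1}}$, with part \ref{item 2 in thm}, which handles the remaining regime $a \to 0$ with $a\sqrt{r} \to +\infty$. Both parts deliver the same formula \eqref{F asymptotics thm s1 neq 0} with $n = 1$, and \eqref{der of error in regimes 1 and 2} controls the $u$-derivatives of the remainder by $\bigO((\log r)^{k}/\sqrt{r})$, which is exactly the error order appearing in the statement.

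The one auxiliary input needed is the Taylor expansion of the Barnes $G$-function,
\begin{align*}
\log G(1+z) = \frac{\log(2\pi) - 1}{2}\, z - \frac{1+\gamma_{\mathrm{E}}}{2}\, z^{2} + \bigO(z^{3}), \qquad z \to 0.
\end{align*}
Specialising to $z = \pm u/(2\pi i)$ and using $(\pm u/(2\pi i))^{2} = -u^{2}/(4\pi^{2})$, the linear terms cancel and one obtains
\begin{align*}
\log\!\bigl(G(1+\tfrac{u}{2\pi i})\, G(1-\tfrac{u}{2\pi i})\bigr) = \frac{1+\gamma_{\mathrm{E}}}{4\pi^{2}}\, u^{2} + \bigO(u^{4}).
\end{align*}
This is the source of the constant $\tfrac{1+\gamma_{\mathrm{E}}}{2\pi^{2}}$ in \eqref{expected and variance asymp 2} after applying $\partial_u^2|_{u=0}$.

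With this in hand, I would apply Theorem \ref{thm:s1 neq 0} with $m = 1$ and differentiate \eqref{F asymptotics thm s1 neq 0} once (for the mean) and twice (for the variance) at $u = 0$: the first derivative picks up only the linear contribution $u\, \mu_{\alpha}(rx_{1})$, while the second combines $\sigma_{\alpha}^{2}(rx_{1})$ with the quadratic Barnes $G$ coefficient computed above, yielding \eqref{expected and variance asymp 1}--\eqref{expected and variance asymp 2}. For the covariance I would apply the theorem with $m = 2$ and $u_{1} = u_{2} = u$: in the ratio $E_{\alpha}((rx_{1}, rx_{2}), (u, u))/[E_{\alpha}(rx_{1}, u)\, E_{\alpha}(rx_{2}, u)]$, every sum in \eqref{F asymptotics thm s1 neq 0} of the form $\sum_{j} f_{j}(\cdots)$ cancels exactly, leaving only the genuinely two-point term $u^{2}\, \Sigma_{a}(x_{2}, x_{1})$ plus a differentiated error; the second $u$-derivative combined with \eqref{asymp u to 0 2} then produces the third asymptotic formula. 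The whole argument is essentially bookkeeping, so the only point requiring genuine care is to verify the cancellation of the odd-in-$u$ piece of $\log G(1+\cdot)G(1-\cdot)$ and to identify $\tfrac{1+\gamma_{\mathrm{E}}}{2\pi^{2}}$ as the right constant in the variance.
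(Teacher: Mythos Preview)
Your proposal is correct and follows exactly the approach the paper takes: the paper derives Corollary \ref{coro: s1>0 consequence of thm} from the identities \eqref{asymp u to 0 1}--\eqref{asymp u to 0 2}, Theorem \ref{thm:s1 neq 0} with $m=1,2$, and the derivative bounds \eqref{der of error in regimes 1 and 2}. You have simply made explicit the Barnes $G$-function Taylor expansion yielding the constant $\tfrac{1+\gamma_{\mathrm{E}}}{2\pi^{2}}$ and the cancellation in the covariance ratio, both of which the paper leaves implicit.
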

\begin{remark}\label{remark: conjecture}
The asymptotics \eqref{expected and variance asymp 2}, without the error term, have been obtained independently in the recent work \cite{SDoussMajSchehr}. The main result of \cite{SDoussMajSchehr} is a general asymptotic formula for the variance of the counting function of an arbitrary \textit{non-interacting fermion process}. Non-interacting fermion processes are determinantal point processes whose kernel can be written in the form 
\vspace{-0.2cm}\begin{align*}
K_{\mu}(x,y) = \sum_{k=1}^{n} \overline{\psi_{k}(x)}\psi_{k}(y), \qquad n = \#\{k: \epsilon_{k} \leq \mu \}, \quad \mu \in \mathbb{R},
\end{align*}\\[-0.4cm]
where $\epsilon_{1} \leq \epsilon_{2} \leq \epsilon_{3} \leq \cdots$ denote the eigenvalues of a Schr\"{o}dinger operator $\mathcal{H} = - \partial_{x}^{2}+V$ associated to a certain potential $V: \mathbb{R}\to \mathbb{R}\cup \{+\infty\}$, and $\psi_{k}$ denotes the eigenfunction associated to $\epsilon_{k}$. It has been observed, see \cite[Section 6.2]{LaCroix}, that the Bessel kernel, after a proper rescaling, arises as the limit $\mu \to + \infty$ of the correlation kernel of the non-interacting fermion process associated to the potential
\begin{align}\label{inverse square well potential}
V(x) = \begin{cases}
\frac{\alpha^{2}-1/4}{2x^{2}}, & \mbox{if } x >0, \\
+\infty, & \mbox{if } x \leq 0.
\end{cases}
\end{align}
The general formula of \cite{SDoussMajSchehr} specialized to the potential \eqref{inverse square well potential} agrees with \eqref{expected and variance asymp 2} up to and including the constant; this consistency has been verified in detail in \cite[Section V (ii)]{SDoussMajSchehr}. 
\end{remark}
The large $r$ asymptotics of $\mathbb{E}[N_{\alpha}(rx_{1})]$, $\mathrm{Var}[N_{\alpha}(rx_{1})]$ and $\mathrm{Cov}[ N_{\alpha}(rx_{1}),N_{\alpha}(rx_{2}) ]$ with $\alpha$ bounded have been obtained in \cite{SoshnikovSineAiryBessel} (for the leading terms) and in \cite[Corollary 1.2]{CharlierBessel} (for the subleading terms). Using \eqref{asymptotics as a is small mean var cov}, it is straightforward to verify that if $a \to 0$ sufficiently fast, the asymptotics of Corollary \ref{coro: s1>0 consequence of thm} agree with \cite[Corollary 1.2]{CharlierBessel}, up to and including the constant term. More precisely, we have the following.
\begin{corollary}[Matching with the Bessel asymptotics with $\alpha$ bounded]\label{coro: s1>0 consequence of thm Bessel bounded} 
Let $0<x_{1}<x_{2}$ be fixed, and for $a>0$, $r>0$, define $\alpha = a \sqrt{r}$. As $r \to + \infty$, $\alpha \to + \infty$ and $a^{2}\sqrt{r} \to 0$, we have 
\begin{align*}
& \mathbb{E}[N_{\alpha}(rx_{1})] = \tilde{\mu}_{\alpha}(rx_{1}) + \bigO \bigg( \frac{\log r}{\sqrt{r}} + a^{2}\sqrt{r} \bigg) , \\
& \mathrm{Var}[N_{\alpha}(rx_{1})] = \tilde{\sigma}_{\alpha}^{2}(rx_{1}) + \frac{1 + \gamma_{\mathrm{E}}}{2 \pi^{2}} + \bigO \bigg( \frac{\log^{2} r}{\sqrt{r}} + a^{2} \bigg), \nonumber \\
& \mathrm{Cov}[ N_{\alpha}(rx_{1}),N_{\alpha}(rx_{2}) ] = \widetilde{\Sigma}(x_{2},x_{1}) + \bigO \bigg( \frac{\log^{2} r}{\sqrt{r}} + a^{2} \bigg). \nonumber
\end{align*}
\end{corollary}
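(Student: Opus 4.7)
The plan is to combine Corollary \ref{coro: s1>0 consequence of thm} with the small-$a$ expansions \eqref{asymptotics as a is small mean var cov} already used in the proof of Corollary \ref{coro:conv to Bessel fixed alpha}. Under the hypotheses $r\to+\infty$, $\alpha=a\sqrt{r}\to+\infty$ and $a^{2}\sqrt{r}\to 0$, we in particular have $a\to 0$, so eventually $a$ lies in any prescribed compact subset of $[0,x_{1})$. Since the error terms in Corollary \ref{coro: s1>0 consequence of thm} are uniform on such compact sets, that corollary supplies asymptotic formulas for $\mathbb{E}[N_{\alpha}(rx_{1})]$, $\mathrm{Var}[N_{\alpha}(rx_{1})]$ and $\mathrm{Cov}[N_{\alpha}(rx_{1}),N_{\alpha}(rx_{2})]$ with leading terms $\mu_{\alpha}(rx_{1})$, $\sigma_{\alpha}^{2}(rx_{1})+(1+\gamma_{\mathrm{E}})/(2\pi^{2})$ and $\Sigma_{a}(x_{2},x_{1})$ respectively, and it only remains to replace these by $\tilde{\mu}_{\alpha}$, $\tilde{\sigma}_{\alpha}^{2}$ and $\widetilde{\Sigma}$.

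The three replacement estimates \eqref{asymptotics as a is small mean var cov} come from elementary Taylor expansions in $a$. For the variance and covariance they are immediate, since $\log(4(x-a^{2})^{3/2}x^{-1})$ and $\log\big((\sqrt{x_{j}-a^{2}}+\sqrt{x_{k}-a^{2}})/|\sqrt{x_{j}-a^{2}}-\sqrt{x_{k}-a^{2}}|\big)$ are smooth functions of $a^{2}$ in a neighbourhood of $a=0$, which gives the $\bigO(a^{2})$ error uniformly for $x,x_{1},x_{2}$ in compact subsets of $(0,+\infty)$. For the mean I would first evaluate the integral in \eqref{def of mu} explicitly (for instance via the substitution $u=a^{2}\sec^{2}\theta$) to obtain
\[
\mu_{\alpha}(rx) = \frac{\sqrt{r}}{\pi}\Bigl(\sqrt{x-a^{2}} - a\arccos(a/\sqrt{x})\Bigr),
\]
and then expand each term in $a$ to recover $\sqrt{rx}/\pi - a\sqrt{r}/2 + \bigO(a^{2}\sqrt{r}) = \tilde{\mu}_{\alpha}(rx)+\bigO(a^{2}\sqrt{r})$.

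Plugging these three expansions into the formulas \eqref{expected and variance asymp 1}--\eqref{expected and variance asymp 2} and the covariance line of Corollary \ref{coro: s1>0 consequence of thm}, and simply adding the two sources of error in each case, produces the three asymptotics in the statement. I do not anticipate any substantive obstacle; the only point to watch is that the $a$-dependent error for the mean is $\bigO(a^{2}\sqrt{r})$ rather than $\bigO(a^{2})$, because $\mu_{\alpha}$ itself is of order $\sqrt{r}$ while $\sigma_{\alpha}^{2}$ and $\Sigma_{a}$ are only of order $\log r$ and $1$ respectively. This accounts for the asymmetry between the three error terms in the statement and is exactly why the matching is said to hold up to the error $\bigO(\log r/\sqrt{r} + a^{2}\sqrt{r})$ in the mean and $\bigO(\log^{2} r/\sqrt{r} + a^{2})$ in the second-order quantities.
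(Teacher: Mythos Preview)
Your proposal is correct and is exactly the approach the paper takes: the paper does not give a separate proof but simply states (just before the corollary) that the result follows from Corollary~\ref{coro: s1>0 consequence of thm} together with the expansions \eqref{asymptotics as a is small mean var cov}. Your write-up supplies precisely these details, including a correct closed-form evaluation of $\mu_{\alpha}(rx)$ and its small-$a$ expansion, so there is nothing to add.
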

The following asymptotics related to the Airy process have been obtained in \cite{SoshnikovSineAiryBessel} (for the leading terms) and in \cite{ChCl3} (for the subleading terms), and are valid as $r_{\mathrm{Ai}} \to + \infty$ with fixed $0 <y_{1}<y_{2}$:
\begin{align*}
& \mathbb{E}[N_{\mathrm{Ai}}(r_{\mathrm{Ai}}y_{1})] = \mu_{\mathrm{Ai}}(r_{\mathrm{Ai}}y_{1}) + o(1), \\
& \mathrm{Var}[N_{\mathrm{Ai}}(r_{\mathrm{Ai}}y_{1})] = \sigma_{\mathrm{Ai}}^{2}(r_{\mathrm{Ai}}y_{1}) + \frac{1+\gamma_{E}}{2\pi^{2}} + o(1), \\
& \mathrm{Cov}[ N_{\mathrm{Ai}}(r_{\mathrm{Ai}}y_{1}),N_{\mathrm{Ai}}(r_{\mathrm{Ai}}y_{2}) ] = \Sigma_{\mathrm{Ai}}(y_{1},y_{2}) + o(1).
\end{align*}
Using the expansions given in the proof of Corollary \ref{coro:conv to Airy}, we establish the following asymptotics (the result follows in the same way as Corollary \ref{coro: s1>0 consequence of thm}).

\begin{corollary}[Matching with the Airy asymptotics]
Let $0<y_{1}<y_{2}$ and $a \in (0,+\infty)$ be fixed. For $r>0$, define $\alpha = a \sqrt{r}$, and let $x_{1},x_{2}$ be such that
\begin{align*}
x_{j} = \frac{\alpha^{2}}{r}+2^{\frac{2}{3}} \frac{\alpha^{\frac{4}{3}}r_{\mathrm{Ai}}}{r}y_{j} = a^{2}+2^{\frac{2}{3}} \frac{a^{\frac{4}{3}}r_{\mathrm{Ai}}}{r^{\frac{1}{3}}}y_{j}, \qquad j=1,2.
\end{align*}
As $r_{\mathrm{Ai}} \to +\infty$ and $r \to + \infty$ such that \eqref{scaling inside coro Airy 2} holds, we have
\begin{align*}
& \mathbb{E}[N_{\alpha}(rx_{1})] = \mu_{\mathrm{Ai}}(r_{\mathrm{Ai}}y_{1})\bigg( 1+\bigO\Big(\frac{r_{\mathrm{Ai}}}{r^{\frac{1}{3}}}\Big) \bigg) + \bigO \bigg( \frac{r^{\frac{5}{6}}\log r}{r_{\mathrm{Ai}}^{4}} \bigg), \\
& \mathrm{Var}[N_{\alpha}(rx_{1})] = \sigma_{\mathrm{Ai}}^{2}(r_{\mathrm{Ai}}y_{1}) + \frac{1 + \gamma_{\mathrm{E}}}{2 \pi^{2}} + \bigO \bigg(\frac{r_{\mathrm{Ai}}}{r^{\frac{1}{3}}} +  \frac{r^{\frac{5}{6}}\log^{2} r}{r_{\mathrm{Ai}}^{4}} \bigg), \nonumber \\
& \mathrm{Cov}[ N_{\alpha}(rx_{1}),N_{\alpha}(rx_{2}) ] = \Sigma_{\mathrm{Ai}}(y_{1}, y_{2}) + \bigO \bigg(  \frac{r^{\frac{5}{6}}\log^{2} r}{r_{\mathrm{Ai}}^{4}} \bigg). \nonumber
\end{align*}
\end{corollary}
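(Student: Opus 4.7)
The plan is to mimic verbatim the derivation of Corollary \ref{coro: s1>0 consequence of thm}, but starting from the Airy-scaled asymptotics in Corollary \ref{coro:conv to Airy} (equivalently, from part \ref{item 3 in thm} of Theorem \ref{thm:s1 neq 0}) instead of from regimes \ref{item 1 in thm}--\ref{item 2 in thm}. First, I rewrite the three quantities of interest via the generating-function identities \eqref{asymp u to 0 1}--\eqref{asymp u to 0 2}: the mean is $\partial_{u}\log E_{\alpha}(rx_{1},u)|_{u=0}$, the variance is $\partial_{u}^{2}\log E_{\alpha}(rx_{1},u)|_{u=0}$, and the covariance is $\tfrac{1}{2}\partial_{u}^{2}\log\big[E_{\alpha}((rx_{1},rx_{2}),(u,u))/(E_{\alpha}(rx_{1},u)E_{\alpha}(rx_{2},u))\big]\big|_{u=0}$.

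Next, I invoke the Airy-scaled expansion \eqref{F asymptotics thm s1 neq 0 Airy} with $m=1$ and $m=2$. Under the scaling \eqref{scaling inside coro Airy 1} one has $|x_{m}-a^{2}|=2^{2/3}a^{4/3}r_{\mathrm{Ai}}y_{m}/r^{1/3}$, so that $|x_{m}-a^{2}|^{4}\sqrt{r}$ is of order $r_{\mathrm{Ai}}^{4}/r^{5/6}$, and the two-sided bound \eqref{scaling inside coro Airy 2} is precisely what forces the regime-\ref{item 3 in thm} hypotheses \eqref{Airy regime in the main first thm} of Theorem \ref{thm:s1 neq 0} to hold. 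By \eqref{der of error in regime 3}, these expansions can be differentiated $k$ times in the $u_{j}$ at the cost of inflating the error to $\bigO((\log r)^{k}/(|x_{m}-a^{2}|^{4}\sqrt{r})) = \bigO(r^{5/6}(\log r)^{k}/r_{\mathrm{Ai}}^{4})$. Taking $k=1$ yields the single-$\log r$ factor appearing in the mean; taking $k=2$ yields the $\log^{2}r$ factor in the variance and covariance estimates.

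The final algebraic step is to evaluate the differentiated explicit expression at $u=0$. For the mean, the linear summand $u_{1}\mu_{\mathrm{Ai}}(r_{\mathrm{Ai}}y_{1})(1+\bigO(r_{\mathrm{Ai}}/r^{1/3}))$ differentiates to the main term. For the variance, the quadratic summand $\tfrac{u_{1}^{2}}{2}(\sigma_{\mathrm{Ai}}^{2}(r_{\mathrm{Ai}}y_{1})+\bigO(r_{\mathrm{Ai}}/r^{1/3}))$ contributes $\sigma_{\mathrm{Ai}}^{2}(r_{\mathrm{Ai}}y_{1})+\bigO(r_{\mathrm{Ai}}/r^{1/3})$, while the Barnes $G$-summand contributes the constant $\partial_{u}^{2}\log[G(1+\tfrac{u}{2\pi i})G(1-\tfrac{u}{2\pi i})]|_{u=0}=\tfrac{1+\gamma_{\mathrm{E}}}{2\pi^{2}}$, obtained from the standard expansion $\log G(1+z)=\tfrac{z}{2}\log(2\pi)-\tfrac{z(z+1)}{2}-\tfrac{\gamma_{\mathrm{E}}z^{2}}{2}+\bigO(z^{3})$ after noting that the odd-order terms cancel under $z\mapsto\pm u/(2\pi i)$. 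For the covariance, when forming the log-ratio $\log[E_{\alpha}((rx_{1},rx_{2}),(u,u))/(E_{\alpha}(rx_{1},u)E_{\alpha}(rx_{2},u))]$, the $\mu_{\mathrm{Ai}}$, $\sigma_{\mathrm{Ai}}^{2}$ and Barnes $G$ contributions cancel identically, leaving the cross term $u^{2}\Sigma_{\mathrm{Ai}}(y_{2},y_{1})$ at leading order, whose second derivative at $u=0$ is $2\Sigma_{\mathrm{Ai}}(y_{2},y_{1})$.

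The main technical obstacle I anticipate is bookkeeping rather than anything structurally new: one must verify that the multiplicative $(1+\bigO(r_{\mathrm{Ai}}/r^{1/3}))$ factor in front of $\mu_{\mathrm{Ai}}$ and the additive $\bigO(r_{\mathrm{Ai}}/r^{1/3})$ in the $\sigma_{\mathrm{Ai}}^{2}$ slot cancel cleanly between the bivariate and the two univariate expansions, leaving a residual that is absorbed by $\bigO(r^{5/6}\log^{2}r/r_{\mathrm{Ai}}^{4})$. This works because, for each $j$, the regime-\ref{item 3 in thm} error is produced by the same Theorem \ref{thm:s1 neq 0} substitution, so the univariate and the $j$-th diagonal term of the bivariate expansion match exactly; only the genuine $u_{1}u_{2}$ cross term $\Sigma_{\mathrm{Ai}}(y_{2},y_{1})$ survives. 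Once this cancellation is tracked, the three asymptotics drop out exactly as stated.
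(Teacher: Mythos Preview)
Your proposal is correct and follows essentially the same approach as the paper, which states only that ``the result follows in the same way as Corollary \ref{coro: s1>0 consequence of thm}'' using ``the expansions given in the proof of Corollary \ref{coro:conv to Airy}.'' Your write-up is in fact more detailed than the paper's: you correctly track the error inflation under differentiation via \eqref{der of error in regime 3}, correctly identify $|x_m-a^2|^4\sqrt{r}\asymp r_{\mathrm{Ai}}^4/r^{5/6}$, and correctly observe that the diagonal contributions in the covariance ratio cancel exactly because they arise from the same $\mu_\alpha(rx_j)$, $\sigma_\alpha^2(rx_j)$ in both the univariate and bivariate expansions (while $\Sigma_a(x_2,x_1)=\Sigma_{\mathrm{Ai}}(y_2,y_1)$ is an exact identity, explaining why no $r_{\mathrm{Ai}}/r^{1/3}$ appears in the covariance error).
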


\subsection{Central limit theorems}
We first obtain a CLT for the counting function of the large $\alpha$ Bessel process.
\begin{corollary}\label{coro: CLTs}
Let $\alpha = a \sqrt{r}$, $a>0$, $r>0$, and let $m \in \mathbb{N}_{>0}$ and $\vec{x} \in \mathbb{R}_{\mathrm{ord}}^{+,m}$ be such that $0<a<x_{1}< \cdots<x_{m}$. Consider the random variables $\mathcal{N}_{j}^{(r)}$ defined by
\begin{align*}
\mathcal{N}_{j}^{(r)} = \frac{N_{\alpha}(rx_{j})-\mu_{\alpha}(rx_{j})}{\sqrt{\sigma_{\alpha}^{2}(rx_{j})}}, \qquad j=1,\ldots,m.
\end{align*}
\begin{enumerate}
\item As $r \to +\infty$ and $\alpha \to + \infty$, we have
\begin{align}\label{convergence in distribution 1}
\big( \mathcal{N}_{1}^{(r)},\mathcal{N}_{2}^{(r)},\ldots,\mathcal{N}_{m}^{(r)}\big) \quad \overset{d}{\longrightarrow} \quad \mathcal{N}(\vec{0},I_{m}),
\end{align}
uniformly for $(a,x_{1},\ldots,x_{m})$ in compact subsets of $\mathbb{R}_{\mathrm{ord}}^{+,m+1}$, where ``$\overset{d}{\longrightarrow}$" means convergence in distribution, $I_{m}$ is the $m \times m$ identity matrix, and $\mathcal{N}(\vec{0},I_{m})$ is a multivariate normal random variable of mean $\vec{0}=(0,\ldots,0)$ and covariance matrix $I_{m}$.
\item The convergence in distribution \eqref{convergence in distribution 1} still holds in the regime where $r \to +\infty$ and $\alpha=a\sqrt{r} \to + \infty$ such that $a \to 0$ and $\vec{x}$ lies in a compact subset of $\mathbb{R}_{\mathrm{ord}}^{+,m}$.
\item The convergence in distribution \eqref{convergence in distribution 1} still holds in the regime where $r \to +\infty$, $\alpha=a\sqrt{r} \to + \infty$, $a \in (0,+\infty)$ is fixed and $\vec{x} \in \mathbb{R}_{\mathrm{ord}}^{+,m}$ satisfies $a^{2} < x_{1} < \cdots < x_{m}$ and \eqref{Airy regime in the main first thm}.
\end{enumerate}

\end{corollary}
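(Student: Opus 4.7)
The plan is to establish the multivariate CLT \eqref{convergence in distribution 1} via convergence of moment generating functions (Curtiss's theorem in the multivariate form). Since the joint Laplace transform of the counting functions is precisely $E_{\alpha}$ evaluated at a suitable $\vec{u}$, Theorem \ref{thm:s1 neq 0} provides exactly the kind of uniform asymptotic expansion needed. All three regimes in the corollary correspond to the three regimes in the theorem, and the argument will be the same in each case, the only differences being the size of the error term and the regime of uniformity.

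\textbf{Main computation.} Fix $\vec{t}=(t_1,\ldots,t_m)\in\mathbb{R}^m$ and set
\begin{align*}
u_j(r) \;=\; \frac{t_j}{\sqrt{\sigma_{\alpha}^{2}(rx_j)}}, \qquad j=1,\ldots,m.
\end{align*}
By \eqref{def of sigma} we have $\sigma_{\alpha}^{2}(rx_j)\sim\frac{\log r}{4\pi^{2}}\to+\infty$ in all three regimes (a direct calculation using \eqref{Airy regime in the main first thm b} and the third condition of \eqref{Airy regime in the main first thm} shows that $\sigma_{\alpha}^{2}(rx_j)\geq c\log r$ for some $c>0$ in the Airy regime as well). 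Hence $u_j(r)\to 0$ and eventually lies in any prescribed compact subset of $\mathbb{R}$, so Theorem \ref{thm:s1 neq 0} applies. The MGF of the normalized vector is
\begin{align*}
M_r(\vec{t})
\;=\; \mathbb{E}\!\left[e^{\sum_{j}t_j\mathcal{N}_{j}^{(r)}}\right]
\;=\; \exp\!\left(-\sum_{j=1}^{m}\frac{t_j\,\mu_{\alpha}(rx_j)}{\sqrt{\sigma_{\alpha}^{2}(rx_j)}}\right) E_{\alpha}(r\vec{x},\vec{u}(r)).
\end{align*}
Plugging in \eqref{F asymptotics thm s1 neq 0}, the drift term is cancelled exactly, the quadratic term yields $\sum_{j}t_j^{2}/2$, and we are left with
\begin{align*}
\log M_r(\vec{t}) \;=\; \sum_{j=1}^{m}\frac{t_j^{2}}{2}
\;+\; \sum_{j<k} u_j u_k\,\Sigma_{a}(x_k,x_j)
\;+\; \sum_{j=1}^{m}\log\!\bigl(G(1+\tfrac{u_j}{2\pi i})G(1-\tfrac{u_j}{2\pi i})\bigr) + \mathcal{E}_r(\vec{t}),
\end{align*}
where $\mathcal{E}_r(\vec{t})$ is the error term from the theorem.

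\textbf{Vanishing of the remainder.} Since $u_j u_k=O(1/\log r)$ and $\Sigma_{a}(x_k,x_j)$ is bounded (uniformly in the parameter ranges specified in each of the three parts of the corollary; in part 3 this uses \eqref{Airy regime in the main first thm b}, which keeps the arguments $\sqrt{x_j-a^{2}}/\sqrt{x_k-a^{2}}$ bounded and bounded away from $1$), the covariance cross terms contribute $O(1/\log r)$. Because $G(1)=1$ and $G$ is entire, $\log\!\bigl(G(1+w)G(1-w)\bigr)=O(w^{2})$ near $w=0$, which gives an $O(1/\log r)$ contribution from the Barnes-$G$ terms. Finally, the error $\mathcal{E}_r(\vec{t})$ is $O(\log r/\sqrt{r})$ in regimes \ref{item 1 in thm} and \ref{item 2 in thm}, and $O(\log r/(|x_m-a^{2}|^{4}\sqrt{r}))$ in regime \ref{item 3 in thm}, which tends to zero by the last assumption in \eqref{Airy regime in the main first thm}. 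Therefore $\log M_r(\vec{t})\to\sum_{j=1}^{m}t_j^{2}/2$, uniformly for $\vec{t}$ in compact subsets of $\mathbb{R}^{m}$ and for the other parameters in the ranges specified in each part. The limit is the MGF of $\mathcal{N}(\vec{0},I_m)$, which is finite on all of $\mathbb{R}^{m}$, so by Curtiss's theorem (multivariate form) we obtain \eqref{convergence in distribution 1}, with uniformity inherited from the uniformity in Theorem \ref{thm:s1 neq 0}.

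\textbf{Main obstacle.} The proof is essentially a bookkeeping exercise once Theorem \ref{thm:s1 neq 0} is available. The only real subtlety is in part 3: one must verify that the normalizing constant $\sqrt{\sigma_{\alpha}^{2}(rx_j)}$ is both positive and tending to infinity even though $x_j-a^{2}\to 0$. This is the purpose of the specific rate $\log r/|x_m-a^{2}|^{4}\sqrt{r}\to 0$ in \eqref{Airy regime in the main first thm}: it forces $|x_m-a^{2}|\gtrsim (\log r/\sqrt{r})^{1/4}$, whence $\log((x_j-a^{2})^{3/2})\geq -\tfrac{3}{8}\log r + O(\log\log r)$ and $\sigma_{\alpha}^{2}(rx_j)\geq \tfrac{5\log r}{32\pi^{2}}+O(\log\log r)\to+\infty$.
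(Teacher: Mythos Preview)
Your proof is correct and follows essentially the same approach as the paper: substitute $u_j=t_j/\sqrt{\sigma_\alpha^2(rx_j)}$ into Theorem~\ref{thm:s1 neq 0}, observe that the drift cancels and the quadratic term gives $\sum_j t_j^2/2$, and conclude via convergence of moment generating functions. You provide more explicit bookkeeping than the paper (which simply absorbs the cross, Barnes-$G$, and error contributions into a single $o(1)$), including the verification that $\sigma_\alpha^2(rx_j)\to+\infty$ in regime~\ref{item 3 in thm}, but the argument is the same.
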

\begin{proof}
Recall that the asymptotics of part \ref{item 1 in thm} of Theorem \ref{thm:s1 neq 0} are valid uniformly for $u_{1},\ldots,u_{m}$ in compact subsets of $\mathbb{R}$, and uniformly for $\vec{x}$ in compact subsets of $\mathbb{R}_{\mathrm{ord}}^{+,m}$. Therefore, it follows from \eqref{F asymptotics thm s1 neq 0} with $n=1$ and $u_{j}=t_{j}/\sqrt{\sigma_{\alpha}^{2}(rx_{j})}$ that
\begin{align*}
\mathbb{E}_{\alpha} \Bigg[ \exp\bigg(\sum_{j=1}^{m} t_{j}\mathcal{N}_{j}^{(r)} \bigg) \Bigg] = \exp \Bigg( \sum_{j=1}^{m} \frac{t_{j}^{2}}{2} + o(1) \Bigg), \qquad \mbox{as } r \to + \infty, \; \alpha \to + \infty,
\end{align*}
uniformly for $\vec{x}$ in compact subsets of $\mathbb{R}_{\mathrm{ord}}^{+,m}$, which implies the convergence in distribution \eqref{convergence in distribution 1} stated in part $1$ of the claim. The regimes considered in parts $2$ and $3$ of the claim follow similarly from parts \ref{item 2 in thm} and \ref{item 3 in thm} of Theorem \ref{thm:s1 neq 0}.
\end{proof}
The Bessel process is locally finite, has almost surely all points distinct and possesses almost surely a smallest point. Let $0<\xi_{\alpha,1}<\xi_{\alpha,2}< \cdots$ be the points in the Bessel point process. Our next result is a CLT for the fluctuations of the points around their classical locations. We will use the notation $[x] = \lfloor x+\frac{1}{2}\rfloor$, i.e. $[x]$ is the closest integer to $x$. Also, since $\mu_{\alpha}:[\alpha^{2},+\infty) \to \mathbb{R}$, defined by \eqref{def of mu}, is strictly increasing, it possesses an inverse which we denote by $\mu_{\alpha}^{-1}$.
\begin{corollary}\label{coro: CLT 2}
Let $\alpha = a \sqrt{r}$, $a>0$, $r>0$, let $m \in \mathbb{N}_{>0}$ and $\vec{x} \in \mathbb{R}_{\mathrm{ord}}^{+,m}$ be such that $0<a<x_{1}< \cdots<x_{m}$, and let $k_{j}=[\mu_{\alpha}(rx_{j})]$, $j=1,\ldots,m$. Consider the random variables $Y_{j}^{(r)}$ defined by
\begin{align*}
Y_{j}^{(r)} = \frac{\mu_{\alpha}(\xi_{\alpha,k_{j}})-k_{j}}{\sqrt{\sigma_{\alpha}^{2}\circ \mu_{\alpha}^{-1}(k_{j})}}, \qquad j=1,\ldots,m.
\end{align*}
\begin{enumerate}
\item As $r \to +\infty$, $\alpha \to + \infty$, such that $(a,x_{1},\ldots,x_{m})$ lies in a compact subset of $\mathbb{R}_{\mathrm{ord}}^{+,m+1}$, we have
\begin{align}\label{convergence in distribution 2}
\big( Y_{1}^{(r)},Y_{2}^{(r)},\ldots,Y_{m}^{(r)}\big) \quad \overset{d}{\longrightarrow} \quad \mathcal{N}(\vec{0},I_{m}).
\end{align}
\item The convergence in distribution \eqref{convergence in distribution 2} still holds in the regime where $r \to +\infty$ and $\alpha=a\sqrt{r} \to + \infty$ such that $a \to 0$ and $\vec{x}$ lies in a compact subset of $\mathbb{R}_{\mathrm{ord}}^{+,m}$.
\item The convergence in distribution \eqref{convergence in distribution 2} still holds in the regime where $r \to +\infty$, $\alpha=a\sqrt{r} \to + \infty$, $a \in (0,+\infty)$ is fixed and $\vec{x}$ satisfies \eqref{Airy regime in the main first thm}.
\end{enumerate}
\end{corollary}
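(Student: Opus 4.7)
The plan is to transfer Corollary \ref{coro: CLTs} from the counting function to the point positions via the elementary duality $\{\xi_{\alpha,k} \leq y\} = \{N_{\alpha}(y) \geq k\}$. Fix $\vec{t}=(t_{1},\ldots,t_{m}) \in \mathbb{R}^{m}$ and introduce the $\vec{t}$-dependent shifted configuration
\begin{align*}
\tilde{x}_{j} := \frac{1}{r}\,\mu_{\alpha}^{-1}\bigg(k_{j}+t_{j}\sqrt{\sigma_{\alpha}^{2}(\mu_{\alpha}^{-1}(k_{j}))}\,\bigg), \qquad j=1,\ldots,m,
\end{align*}
which is built so that $\mu_{\alpha}(r\tilde{x}_{j}) = k_{j}+t_{j}\sqrt{\sigma_{\alpha}^{2}(\mu_{\alpha}^{-1}(k_{j}))}$. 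The monotonicity of $\mu_{\alpha}$ on $[\alpha^{2},+\infty)$ then gives $\{Y_{j}^{(r)} \leq t_{j}\} = \{\xi_{\alpha,k_{j}} \leq r\tilde{x}_{j}\} = \{N_{\alpha}(r\tilde{x}_{j}) \geq k_{j}\}$, and in terms of the normalized variable $\widetilde{\mathcal{N}}_{j}^{(r)} := (N_{\alpha}(r\tilde{x}_{j})-\mu_{\alpha}(r\tilde{x}_{j}))/\sqrt{\sigma_{\alpha}^{2}(r\tilde{x}_{j})}$ this rewrites as $\{\widetilde{\mathcal{N}}_{j}^{(r)} \geq -t_{j}\,\rho_{j}^{(r)}\}$ with $\rho_{j}^{(r)}:=\sqrt{\sigma_{\alpha}^{2}(\mu_{\alpha}^{-1}(k_{j}))/\sigma_{\alpha}^{2}(r\tilde{x}_{j})}$.

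The next step is to check that (i) $\tilde{\vec{x}}$ still satisfies the assumptions of Corollary \ref{coro: CLTs} in each of the three regimes, so the latter applies to $(\widetilde{\mathcal{N}}_{1}^{(r)},\ldots,\widetilde{\mathcal{N}}_{m}^{(r)})$, and that (ii) $\rho_{j}^{(r)} \to 1$. Using $k_{j}=\mu_{\alpha}(rx_{j})+\bigO(1)$ and the explicit derivative $\mu_{\alpha}'(rx)=\sqrt{rx-\alpha^{2}}/(2\pi rx)$, a first-order expansion of $\mu_{\alpha}^{-1}$ around $\mu_{\alpha}(rx_{j})$ yields
\begin{align*}
r\tilde{x}_{j}-rx_{j} = \bigO\!\bigg(\frac{x_{j}\sqrt{r\log r}}{\sqrt{x_{j}-a^{2}}}\bigg).
\end{align*}
In regimes \ref{item 1 in thm} and \ref{item 2 in thm}, $x_{j}-a^{2}$ is bounded away from $0$, so $\tilde{x}_{j} \to x_{j}$ and $\tilde{\vec{x}}$ lies in a (slightly enlarged) compact subset of the type required by Corollary \ref{coro: CLTs}. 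In regime \ref{item 3 in thm}, the hypothesis $\log r/(|x_{m}-a^{2}|^{4}\sqrt{r}) \to 0$ forces $r(x_{j}-a^{2})^{3} \to +\infty$, which yields $\tilde{x}_{j}-x_{j} = o(x_{j}-a^{2})$; in particular the bounded-ratio condition \eqref{Airy regime in the main first thm b} is preserved under the shift. Combining these bounds with the explicit form of $\sigma_{\alpha}^{2}$ in \eqref{def of sigma} shows that $\rho_{j}^{(r)}=1+o(1)$ in all three regimes.

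The conclusion is then immediate. Corollary \ref{coro: CLTs} applied at $\tilde{\vec{x}}$ gives the joint convergence $(\widetilde{\mathcal{N}}_{1}^{(r)},\ldots,\widetilde{\mathcal{N}}_{m}^{(r)}) \overset{d}{\longrightarrow} \mathcal{N}(\vec{0},I_{m})$; squeezing with $\rho_{j}^{(r)}=1+o(1)$ and exploiting the symmetry $\mathcal{N}(0,1)\sim -\mathcal{N}(0,1)$ produces
\begin{align*}
\mathbb{P}\big(Y_{1}^{(r)}\leq t_{1},\ldots,Y_{m}^{(r)}\leq t_{m}\big) \longrightarrow \prod_{j=1}^{m}\Phi(t_{j}), \qquad \vec{t}\in \mathbb{R}^{m},
\end{align*}
which, since the limit is continuous in $\vec{t}$, is exactly \eqref{convergence in distribution 2}. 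The main technical obstacle is the stability check in regime \ref{item 3 in thm}: one must simultaneously verify that $\tilde{\vec{x}}$ still obeys all three lines of \eqref{Airy regime in the main first thm} and that $\sigma_{\alpha}^{2}$ is asymptotically nonvanishing (needed for $\rho_{j}^{(r)} \to 1$). Both reduce algebraically to the single lower bound $r(x_{j}-a^{2})^{3} \to +\infty$, which follows from the assumed slow-decay rate after a short computation.
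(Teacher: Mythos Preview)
Your proposal is correct and follows essentially the same route as the paper: the same duality $\{\xi_{\alpha,k_j}\le y\}=\{N_\alpha(y)\ge k_j\}$, the same shifted points $\tilde x_j=\tfrac{1}{r}\mu_\alpha^{-1}(k_j+t_j\sqrt{\sigma_\alpha^2\circ\mu_\alpha^{-1}(k_j)})$, the same reduction to Corollary~\ref{coro: CLTs} applied at $\tilde{\vec x}$, and the same use of the symmetry of the standard normal. The only cosmetic difference is that you package the variance ratio as $\rho_j^{(r)}$ and give a single displacement estimate valid across all three regimes, whereas the paper treats regimes~\ref{item 1 in thm}--\ref{item 2 in thm} and regime~\ref{item 3 in thm} separately with slightly different bookkeeping; in regime~\ref{item 3 in thm} your claim that the hypothesis forces ``$r(x_j-a^2)^3\to+\infty$'' should more precisely read $r(x_j-a^2)^3/\log r\to+\infty$, but this is what your displacement bound actually requires and it does follow from \eqref{Airy regime in the main first thm} as you indicate.
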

\begin{proof}
The proof is inspired by (but different from) the proof of \cite[Theorem 1.2]{Gustavsson}. Given $y_{1},\ldots,y_{m} \in \mathbb{R}$, we have
\begin{align}
& \mathbb{P}\big[ Y_{j}^{(r)}\leq y_{j} \mbox{ for all } j=1, \ldots,m \big] = \mathbb{P}\Big[\xi_{\alpha,k_{j}} \leq \mu_{\alpha}^{-1}\Big(k_{j} + y_{j}\sqrt{\sigma_{\alpha}^{2}\circ \mu_{\alpha}^{-1}(k_{j})}\Big) \mbox{ for all } j=1, \ldots,m \Big], \nonumber \\
& = \mathbb{P}\Big[N_{\alpha}\Big(\mu_{\alpha}^{-1}\Big(k_{j} + y_{j}\sqrt{\sigma_{\alpha}^{2}\circ \mu_{\alpha}^{-1}(k_{j})}\Big)\Big) \geq k_{j} \mbox{ for all } j=1, \ldots,m \Big]. \label{prob1}
\end{align}
Let us define 
\begin{align*}
\tilde{x}_{j} := \frac{1}{r} \, \mu_{\alpha}^{-1}\Big(k_{j} + y_{j}\sqrt{\sigma_{\alpha}^{2}\circ \mu_{\alpha}^{-1}(k_{j})}\Big),  \qquad j=1,\ldots,m.
\end{align*}
As $r \to +\infty$, $\alpha \to + \infty$ such that $(a,x_{1},\ldots,x_{m})$ lies in a compact subset of $\mathbb{R}_{\mathrm{ord}}^{+,m+1}$, we verify from \eqref{def of mu} and \eqref{def of sigma} that 
\begin{align*}
k_{j} = [\mu_{\alpha}(rx_{j})] = \bigO(\sqrt{r}), \qquad \tilde{x}_{j}=x_{j}\Big(1+\bigO\Big(\sqrt{\tfrac{\log r}{r}}\Big)\Big), 
\end{align*}
and in particular $(a,\tilde{x}_{1},\ldots,\tilde{x}_{m})$ lies also in a compact subset of $\mathbb{R}_{\mathrm{ord}}^{+,m+1}$ for all sufficiently large $r$. 
Now, we rewrite \eqref{prob1} as
\begin{align}
& \mathbb{P}\big[ Y_{j}^{(r)} \leq y_{j} \mbox{ for all } j=1, \ldots,m \big] \nonumber \\
& = \mathbb{P}\bigg[ \frac{N_{\alpha}(r\tilde{x}_{j})-\mu_{\alpha}(r\tilde{x}_{j})}{\sqrt{\sigma_{\alpha}^{2} (r\tilde{x}_{j})}} \geq \frac{k_{j}-\mu_{\alpha}(r\tilde{x}_{j})}{\sqrt{\sigma_{\alpha}^{2} (r\tilde{x}_{j})}} \mbox{ for all } j=1, \ldots,m \bigg] \nonumber \\
& = \mathbb{P}\bigg[ \frac{N_{\alpha}(r\tilde{x}_{j})-\mu_{\alpha}(r\tilde{x}_{j})}{\sqrt{\sigma_{\alpha}^{2} (r\tilde{x}_{j})}} \geq -y_{j}\frac{\sqrt{\sigma_{\alpha}^{2} \circ \mu_{\alpha}^{-1}(k_{j})}}{\sqrt{\sigma_{\alpha}^{2} (r\tilde{x}_{j})}} \mbox{ for all } j=1, \ldots,m \bigg] \nonumber \\
& = \mathbb{P}\bigg[ \frac{\mu_{\alpha}(r\tilde{x}_{j})-N_{\alpha}(r\tilde{x}_{j})}{\sqrt{\sigma_{\alpha}^{2}(r\tilde{x}_{j})}} \leq y_{j}(1+o(1)) \mbox{ for all } j=1, \ldots,m \bigg], \label{Prob Y leq yj}
\end{align}
where $o(1)$ in the last equality is understood as $r \to + \infty$, $\alpha \to + \infty$ such that $(a,\tilde{x}_{1},\ldots,\tilde{x}_{m})$ lies in a compact subset of $\mathbb{R}_{\mathrm{ord}}^{+,m+1}$. 
Part $1$ of the claim now follows from \eqref{Prob Y leq yj} and part 1 of Corollary \ref{coro: CLTs}, because if $\mathcal{N}$ is a multivariate normal random variable of mean $\vec{0}$ and covariance matrix $I_{m}$, then so is $-\mathcal{N}$. Part $2$ follows similarly from \eqref{Prob Y leq yj} and part $2$ of Corollary \ref{coro: CLTs}. For part $3$ of the claim, we note that
\begin{align*}
k_{j} =  \bigO(\sqrt{r}|x_{m}-a^{2}|^{3/2}), \qquad \tilde{x}_{j}=x_{j}\Big(1+\bigO\Big(\tfrac{\sqrt{\log (r|x_{m}-a^{2}|^{3/2})}}{\sqrt{r}|x_{m}-a^{2}|^{3/2}}\Big)\Big).
\end{align*}
Since $\tfrac{\sqrt{\log (r|x_{m}-a^{2}|^{3/2})}}{\sqrt{r}|x_{m}-a^{2}|^{3/2}} = \bigO\big( \frac{\log r}{|\tilde{x}_{m}-a^{2}|^{4}\sqrt{r}} \big)$ as $r \to + \infty$, $\alpha \to + \infty$ with $a \in (0,+\infty)$ fixed and $\vec{x}$ satisfying \eqref{Airy regime in the main first thm}, the claim follows from \eqref{Prob Y leq yj} and part $3$ of Corollary \ref{coro: CLTs}.
\end{proof}

\subsection{Rigidity}\label{subsection:rigidity}
There exist several notions of \textit{rigidity} in the literature. For example, the Bessel process is said to be \textit{rigid} in \cite{BufetovRigidityAiryBessel, BufetovConditional, MolagStevens} because $N_{\alpha}(x)$ is almost surely determined by the points on $(x,+\infty)$.
In this work, the \textit{rigidity} of a point process refers to the study of the maximal deviation of the points with respect to their classical locations. This notion of rigidity has been widely studied in recent years, see e.g. \cite{ErdosYauYin,ArguinBeliusBourgade} for important early works, \cite{HolcombPaquette} for the sine process, \cite{ChCl4} for the Airy and Bessel point processes, and \cite{CharlierPearcey} for the Pearcey process. A global rigidity upper bound for the Bessel process with $\alpha$ bounded has been established in \cite{ChCl4}. In this work, we contribute in this direction by providing a global rigidity upper bound for the large $\alpha$ Bessel process. Our techniques are inspired by previous works and rely on the first exponential moment asymptotics, which are given by \eqref{F asymptotics thm s1 neq 0} with $m=1$. There already exist several general rigidity theorems that are available in the literature, see \cite{CFLW, ChCl4}, but none of them can be directly applied to our case. The reason is that here the Bessel process varies as $r$ increases (recall that $\alpha \to + \infty$ as $r \to + \infty$). 

\medskip In fact, we will go slightly further and prove a general rigidity result which can be applied to varying point processes that possess a smallest point almost surely and for which the first exponential moment asymptotics are known. This result, which is stated in Theorem \ref{thm:rigidity} below, generalizes \cite[Theorem 1.2]{ChCl4} and allows us to obtain a global rigidity upper bound for the large $\alpha$ Bessel process. It can also be applied to other varying point processes, such as the conditional Airy and Bessel point processes considered in \cite[Theorem 1.2]{ChCl3} and \cite[Theorem 1.4]{CharlierBessel}, but we do not pursue this matter here.


\medskip Assume that we have a family of point processes $\{X_{r}\}_{r \geq 0}$, each of them with an almost surely smallest point. Let $\mathrm{N}_{r}(x)$ be the random variable that counts the number of points of $X_{r}$ that are $\leq x$. Our general rigidity result will be valid under the following assumptions on $\{X_{r}\}_{r \geq 0}$. 

\begin{assumptions}\label{assumptions}
There exist constants $\mathfrak{a} >0$, $M > \sqrt{2/\mathfrak{a}}$, $\mathrm{C}>0$, $r_0>0$, $\{\eta_{r,2},\eta_{r,1},\delta_{r}\}_{r \geq r_{0}}$ with $0 < \eta_{r,1} < \eta_{r,2} < + \infty$, $\delta_{r} \in (0,\frac{\eta_{r,2}-\eta_{r,1}}{5})$ and functions $\{\upmu_{r},\upsigma_{r}:[\eta_{r,1} r,\eta_{r,2}r]\to [0,+\infty)\}_{r \geq r_{0}}$ such that the following statements hold:
\begin{enumerate}[label=(\arabic*)]
\item We have
\begin{equation}\label{expmomentbound}
\mathbb{E} \big[e^{\gamma \mathrm{N}_{r}(xr)}\big]\leq \mathrm{C} \, e^{\gamma \upmu_{r}(xr)+\frac{\gamma^{2}}{2}\upsigma_{r}^2(xr)},
\end{equation}
for all $\gamma\in[-M,M]$, all $x \in (\eta_{r,1}+\frac{\delta_{r}}{2},\eta_{r,2}-\frac{\delta_{r}}{2})$, and  all $r>r_0$.
\item For each $r>r_{0}$, the functions $x \mapsto \upmu_{r}(x)$ and $x \mapsto \upsigma_{r}(x)$ are strictly increasing and differentiable, and satisfy 
\begin{align*}
\lim_{r\to + \infty} \upmu_{r}((\eta_{r,1}+\delta_{r}) r) = + \infty \qquad \mbox{ and } \qquad \lim_{r\to + \infty} \upsigma_{r}((\eta_{r,1}+\delta_{r}) r) = + \infty.
\end{align*}
Moreover, $x\mapsto x\upmu_{r}'(x)$ is non-decreasing and
\begin{align}\label{some limits are + inf in Assumptions}
\lim_{r\to+\infty} \inf_{x,y \in (\eta_{r,1}+\frac{\delta_{r}}{2},\eta_{r,2}-\frac{\delta_{r}}{2})} \frac{\delta_{r}r\upmu_{r}'(x r)}{\upsigma_{r}^2(y r)}=+\infty.
\end{align}
\item For each $r>r_{0}$, the function $\upsigma_{r}^2\circ\upmu_{r}^{-1}:[\upmu_{r}(\eta_{r,1}r),\upmu_{r}(\eta_{r,2}r)]\to [0,+\infty)$ is strictly concave, strictly increasing, and
\begin{align}\label{afrak def}
& (\upsigma_{r}^2\circ\upmu_{r}^{-1})(k)  = \mathfrak{a} \log k +\bigO(1) \qquad \mbox{as} \quad r\to +\infty,
\end{align}
uniformly for $k \in [\upmu_{r}((\eta_{r,1}+\frac{\delta_{r}}{2})r),\upmu_{r}((\eta_{r,2}-\frac{\delta_{r}}{2})r)]$. 
\item The quantities $\eta_{r,1}+\delta_{r}$ and $\eta_{r,2}-\eta_{r,1}-4\delta_{r}$ remain bounded away from $0$ as $r \to + \infty$, and $\eta_{r,2}$ remains bounded away from $+\infty$ as $r \to + \infty$.
\end{enumerate}
\end{assumptions}

\begin{theorem}[Rigidity]\label{thm:rigidity}
Suppose that $\{X_{r}\}_{r \geq 0}$ is a family of locally finite point processes on the real line with almost surely a smallest point and such that Assumptions \ref{assumptions} hold. Let $\mathfrak{a}, \eta_{r,1},\eta_{r,2},\delta_{r}$ be the constants appearing in Assumptions \ref{assumptions}, and let $\upxi_{r,k}$ denote the $k$-th smallest point of $X_{r}$, $k \geq 1$, $r \geq 0$. Let $\lambda >1$. There exist constants $c>0$ and $r_{0}>0$ such that for any small enough $\epsilon>0$ and for all $r \geq r_{0}$, 
\begin{align}\label{estimate of main thm}
\mathbb P\left(\max_{k \in I_{r}\cap \mathbb{N}_{\geq 0}} \frac{|\upmu_{r}(\upxi_{r,k}) - k|}{(\upsigma_{r}^2\circ\upmu_{r}^{-1})(k)} \leq  \sqrt{\frac{2}{\mathfrak{a}} (1+ \epsilon )} \right)\geq 1-\frac{c\, \upmu_{r}((\eta_{r,1}+\delta_{r}) r)^{-\frac{\epsilon}{\lambda}}}{\epsilon},
\end{align}
where $I_{r} = (\upmu_{r}((\eta_{r,1}+2\delta_{r}) r),\upmu_{r}((\eta_{r,2}-2\delta_{r}) r))$.
In particular, for any $\epsilon>0$,
\begin{align}\label{upper bound sharp general}
\lim_{r \to + \infty}\mathbb P\left(\max_{k \in I_{r}\cap \mathbb{N}_{\geq 0}} \frac{|\upmu_{r}(\upxi_{r,k}) - k|}{(\upsigma_{r}^2\circ\upmu_{r}^{-1})(k)} \leq  \sqrt{\frac{2}{\mathfrak{a}}}+\epsilon \right)=1.
\end{align}
\end{theorem}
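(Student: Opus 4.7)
The plan is to reduce the rigidity bound to one-sided tail estimates for the counting function $\mathrm{N}_{r}$ and then apply a union bound over $k \in I_{r} \cap \mathbb{N}_{\geq 0}$. The starting point is the elementary duality
$$\upmu_{r}(\upxi_{r,k}) > k + s \; \iff \; \mathrm{N}_{r}\bigl(\upmu_{r}^{-1}(k+s)\bigr) < k, \qquad \upmu_{r}(\upxi_{r,k}) < k - s \; \iff \; \mathrm{N}_{r}\bigl(\upmu_{r}^{-1}(k-s)\bigr) \geq k,$$
which is valid because $\upmu_{r}$ is strictly increasing (Assumption \ref{assumptions}(2)) and $X_{r}$ has an almost surely smallest point. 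Writing $y_{\pm} = \upmu_{r}^{-1}(k \pm s)$ so that $\upmu_{r}(y_{\pm}) = k \pm s$, the two rare events become one-sided deviations of $\mathrm{N}_{r}(y_{\pm}) - \upmu_{r}(y_{\pm})$ by $\mp s$, and it suffices to control each of them separately.

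Next I would apply exponential Chebyshev using Assumption \ref{assumptions}(1): for $\gamma \in (0,M]$,
$$\mathbb{P}\bigl(\mathrm{N}_{r}(y_{+}) - \upmu_{r}(y_{+}) < -s\bigr) \leq \mathrm{C}\, e^{\gamma^{2}\upsigma_{r}^{2}(y_{+})/2 - \gamma s},$$
and analogously for the upper tail with $-\gamma \in [-M,0)$. The optimal choice $\gamma_{\star} = s/\upsigma_{r}^{2}(y_{\pm})$ produces the Gaussian-type bound $\mathrm{C}\, e^{-s^{2}/(2\upsigma_{r}^{2}(y_{\pm}))}$. I would then take
$$s = s_{k} := \sqrt{\tfrac{2(1+\epsilon)}{\mathfrak{a}}}\,(\upsigma_{r}^{2}\circ\upmu_{r}^{-1})(k),$$
so that \eqref{afrak def} gives $s_{k} = \sqrt{2(1+\epsilon)\mathfrak{a}}\,\log k + \bigO(1)$ while $\gamma_{\star}$ tends to the finite constant $\sqrt{2(1+\epsilon)/\mathfrak{a}}$ as $k \to \infty$; the hypothesis $M > \sqrt{2/\mathfrak{a}}$ ensures $\gamma_{\star} \leq M$ once $\epsilon$ is small enough. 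Two admissibility checks are then needed: that $y_{\pm}$ lies inside the interval $[(\eta_{r,1}+\delta_{r}/2)r,\,(\eta_{r,2}-\delta_{r}/2)r]$ on which \eqref{expmomentbound} is available, and that the optimizer $\gamma_{\star}$ belongs to $[-M,M]$. The first check forces reserving the margin $\delta_{r}$ in the definition of $I_{r}$: the displacement $|y_{\pm} - \upmu_{r}^{-1}(k)| \lesssim s_{k}/\upmu_{r}'(y)$ is shown to be $\ll \delta_{r}r$ via the divergence \eqref{some limits are + inf in Assumptions}.

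Plugging \eqref{afrak def} in once more, the exponent becomes $s_{k}^{2}/(2\upsigma_{r}^{2}(y_{\pm})) = (1+\epsilon)\log k + \bigO(1)$, the strict concavity of $\upsigma_{r}^{2}\circ\upmu_{r}^{-1}$ being used to control the ratio $\upsigma_{r}^{2}(y_{\pm})/(\upsigma_{r}^{2}\circ\upmu_{r}^{-1})(k)$ uniformly in $r$. For any fixed $\lambda > 1$, taking $r$ large enough so that the $\bigO(1)$ error absorbs into $\epsilon(1-1/\lambda)\log k$, each individual tail is at most $\mathrm{C}\,k^{-(1+\epsilon/\lambda)}$. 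A union bound then gives
$$\mathbb{P}\Bigg(\max_{k \in I_{r}\cap\mathbb{N}_{\geq 0}}\frac{|\upmu_{r}(\upxi_{r,k}) - k|}{(\upsigma_{r}^{2}\circ\upmu_{r}^{-1})(k)} > \sqrt{\tfrac{2(1+\epsilon)}{\mathfrak{a}}}\Bigg) \leq 2\mathrm{C}\!\!\sum_{k \geq \lceil \upmu_{r}((\eta_{r,1}+2\delta_{r})r)\rceil}\!\! k^{-(1+\epsilon/\lambda)} \leq \frac{c\,\upmu_{r}((\eta_{r,1}+\delta_{r})r)^{-\epsilon/\lambda}}{\epsilon},$$
which is precisely \eqref{estimate of main thm}. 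The second statement \eqref{upper bound sharp general} is immediate: for arbitrary $\epsilon > 0$, pick $\epsilon' > 0$ with $\sqrt{2(1+\epsilon')/\mathfrak{a}} \leq \sqrt{2/\mathfrak{a}} + \epsilon$, apply the first estimate with $\epsilon'$, and use $\upmu_{r}((\eta_{r,1}+\delta_{r})r) \to \infty$ from Assumption \ref{assumptions}(2).

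The main obstacle I anticipate is not the exponential Chebyshev step, which is entirely classical, but the bookkeeping around the perturbation $k \mapsto k \pm s_{k}$: one has to track the gap between $\upsigma_{r}^{2}(y_{\pm})$ and $(\upsigma_{r}^{2}\circ\upmu_{r}^{-1})(k)$, keep $\gamma_{\star}$ inside $[-M,M]$, and keep $y_{\pm}$ inside the interval on which \eqref{expmomentbound} holds, all while verifying that the accumulated $\bigO(1)$ corrections to the exponent degrade $1+\epsilon$ only into $1+\epsilon/\lambda$ and no worse. This is precisely where the monotonicity of $x\mapsto x\upmu_{r}'(x)$, the strict concavity of $\upsigma_{r}^{2}\circ\upmu_{r}^{-1}$, and the uniform divergence \eqref{some limits are + inf in Assumptions} all enter the argument, and where one must be careful that the implied constants are genuinely $r$-independent.
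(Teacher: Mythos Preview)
Your proposal is correct and uses the same core ingredients as the paper (exponential Chebyshev from Assumption~\ref{assumptions}(1), a union bound over integers, and the logarithmic asymptotic \eqref{afrak def}), but it arranges them in a genuinely different order.

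The paper proceeds in two steps. First (Lemma~\ref{lemma: A r eps}) it proves a \emph{counting-function} rigidity statement: it discretizes the continuum supremum $\sup_{x}|\mathrm{N}_{r}(x)-\upmu_{r}(x)|/\upsigma_{r}^{2}(x)$ over the classical grid $\kappa_{r,k}=\upmu_{r}^{-1}(k)$, applies exponential Chebyshev at those grid points, and sums. Second (Lemma~\ref{lemma: xk not far away from kappa k}) it shows that on that event one has $\upxi_{r,k}\in((\eta_{r,1}+\delta_{r})r,(\eta_{r,2}-\delta_{r})r)$ for all $k\in I_{r}$, and then specializes the counting-function bound to $x=\upxi_{r,k}$ (where $\mathrm{N}_{r}(\upxi_{r,k})=k$), converting the ratio $\upsigma_{r}^{2}(\upxi_{r,k})/(\upsigma_{r}^{2}\circ\upmu_{r}^{-1})(k)$ via concavity. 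By contrast, you go straight for the points $\upxi_{r,k}$: the duality $\{\upmu_{r}(\upxi_{r,k})>k+s\}=\{\mathrm{N}_{r}(y_{+})<k\}$ lets you apply Chebyshev at the \emph{displaced} locations $y_{\pm}=\upmu_{r}^{-1}(k\pm s_{k})$, and the localisation step becomes the check that $y_{\pm}$ stays in the window where \eqref{expmomentbound} holds.

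Your route is a bit more direct and avoids stating counting-function rigidity as a separate lemma; the paper's route gives that intermediate result for free, which is of independent interest. The technical bookkeeping is comparable: where the paper must control $\upsigma_{r}^{2}(\kappa_{r,k})-\upsigma_{r}^{2}(\kappa_{r,k-1})$ and then $\upsigma_{r}^{2}(\upxi_{r,k})/(\upsigma_{r}^{2}\circ\upmu_{r}^{-1})(k)$, you must instead control $\upsigma_{r}^{2}(y_{\pm})/(\upsigma_{r}^{2}\circ\upmu_{r}^{-1})(k)$ and keep $\gamma_{\star}\le M$; both reductions use the same concavity and the divergence \eqref{some limits are + inf in Assumptions} in essentially the same way. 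Your anticipated obstacles are exactly the right ones, and your outline handles them.
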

\begin{remark}
We expect \eqref{estimate of main thm} to hold also for $\lambda=1$, but this sharper case requires a more delicate analysis and we do not pursue this here.
\end{remark}
\begin{remark}\label{remark:big lebesgue measure}
Assumptions \ref{assumptions} imply that the Lebesgue measure of $I_{r}$ tends to $+ \infty$ as $r \to + \infty$. Indeed, using parts (2) and (4) of Assumptions \ref{assumptions}, one has
\begin{align*}
|I_{r}| & \geq (\eta_{r,2}-\eta_{r,1}-4\delta_{r}) \inf_{} r\upmu_{r}'(x r) \geq  \tfrac{(\eta_{r,2}-\eta_{r,1}-4\delta_{r})(\eta_{r,1}+2\delta_{r})}{\eta_{r,2}-2\delta_{r}} r\upmu_{r}'((\eta_{r,1}+2\delta_{r})r)  \geq c_{1}r\upmu_{r}'((\eta_{r,1}+2\delta_{r})r),
\end{align*}
where the infimum is taken over $x \in (\eta_{r,1}+2\delta_{r},\eta_{r,2}-2\delta_{r})$ and $c_{1}>0$ is independent of $r$. It follows from part (2) of Assumptions \ref{assumptions} that the right-hand side tends to $+\infty$ as $r \to + \infty$. This means that the maximum in \eqref{upper bound sharp general} is taken over a large number of values of $k$ as $r$ gets large. 

The parameters $\eta_{r,2}$ and $\eta_{r,1}$ influence the size of $|I_{r}|$. Since the bound \eqref{upper bound sharp general} gives more information about $\{X_{r}\}_{r \geq 0}$ if $|I_{r}|$ is larger, it is important, in concrete examples, to choose $\eta_{r,2}$ as large as possible, and $\eta_{r,1}$ and $\delta_{r}$ as small as possible, such that \eqref{expmomentbound} holds.  
\end{remark}
In the next subsection, we use Theorem \ref{thm:rigidity} to obtain a global rigidity upper bound for the large $\alpha$ Bessel process.
\subsection{Rigidity of the large $\alpha$ Bessel point process}\label{section: rigidity of large alpha Bessel process}
Assume that $\alpha = \alpha(r) > 0$ is an increasing function of $r$ such that $\{a:=\alpha^{2}/r\}_{r\geq 1}$ is bounded and $\alpha \to + \infty$ as $r \to + \infty$. Let $X_{r}$ be the Bessel process associated with $\alpha$. It follows from parts $1$ and $2$ of Theorem \ref{thm:s1 neq 0} that the sequence $\{X_{r}\}_{r \geq 1}$ satisfies Assumptions \ref{assumptions} with $\eta_{r,1} = \alpha^{2}/r$, $\eta_{r,2} = K$, $r_{0}$ sufficiently large,
\begin{align*}
& \mathfrak{a} = \frac{1}{2\pi^{2}}, \qquad  \upmu_{r}(\xi) = \mu_{\alpha}(\xi), \qquad \upsigma_{r}^{2}(\xi) = \sigma_{\alpha}^{2}(\xi), \qquad \delta_{r} = \delta > 0, \quad \mbox{ and } \quad M> \sqrt{2/\mathfrak{a}},
\end{align*}
$\mathrm{C} = 2\sup_{u \in [-M,M]}G(1+\frac{u}{2\pi i})G(1-\frac{u}{2\pi i})$, where $\mu_{\alpha}$ and $\sigma_{\alpha}^{2}$ are defined in \eqref{def of mu}--\eqref{def of sigma}, where $M>\sqrt{2/\mathfrak{a}}$ and $K> \sup_{r \geq 1} \alpha^{2}/r$ can be chosen arbitrarily large but finite and where $\delta>0$ can be chosen arbitrarily small but fixed. Indeed, all statements in Assumptions \ref{assumptions} follow from straightforward computations except \eqref{afrak def}. To verify \eqref{afrak def}, suppose $k \in [\upmu_{r}((\eta_{r,1}+\frac{\delta}{2})r),\upmu_{r}((\eta_{r,2}-\frac{\delta}{2})r)]$. Since $\mu_{\alpha}$ is increasing, it follows that $\mu_\alpha^{-1}(k) \in [(\eta_{r,1}+\frac{\delta}{2})r,  (\eta_{r,2}-\frac{\delta}{2})r]$ and hence $\alpha^2 + cr \leq \mu_\alpha^{-1}(k) \leq Cr$ for some constants $c,C>0$. Since, by \eqref{def of sigma},
$$\sigma_{\alpha}^{2}(\xi) = \frac{\log \xi}{4\pi^2} + \bigO(1), \qquad \mbox{as }\alpha \to + \infty \mbox{ uniformly for } \xi \in [\alpha^2 + cr,Cr],$$
we conclude that
\begin{align}\label{upsigmaupmuinv}
(\upsigma_{r}^2\circ\upmu_{r}^{-1})(k) = (\sigma_{\alpha}^{2} \circ \mu_\alpha^{-1})(k)
= \frac{\log(\mu_\alpha^{-1}(k))}{4\pi^2} + \bigO(1)
= \frac{\log{r}}{4\pi^2} + \bigO(1), \qquad r \to +\infty.
\end{align}
Moreover, by \eqref{def of mu}, there exist constants $c_1, C_1 > 0$ such that $c_1\sqrt{\xi} \leq \mu_\alpha(\xi) \leq C_1 \sqrt{\xi}$ whenever $\xi \in [\alpha^2 + cr, Cr]$; thus $c_2 \sqrt{r} \leq k \leq C_2 \sqrt{r}$ for some constants $c_2, C_2 > 0$. This implies that $\log{r} = \log(k^2) + \bigO(1)$ and \eqref{afrak def} then follows from \eqref{upsigmaupmuinv}. Therefore, by specializing Theorem \ref{thm:rigidity} to the large $\alpha$ Bessel process, we obtain the following.

\begin{theorem}[Rigidity of the large $\alpha$ Bessel point process]\label{thm:rigidity Bessel} 
Assume that $\alpha$ is an increasing function of $r$ such that $\{a:=\alpha^{2}/r\}_{r\geq 1}$ is bounded and $\alpha \to + \infty$ as $r \to + \infty$, and let $\xi_{\alpha,1}<\xi_{\alpha,2}< \cdots$ be the points in the Bessel point process $X_{r}$. Fix $\lambda > 1$. For any $K > \sup_{r \geq 1} \alpha^{2}/r$, any $\delta \in (0,K)$ and any small enough $\epsilon>0$, there exist $r_{0}>0, c >0$ such that
\begin{align}\label{rigidity inside thm Bessel}
\mathbb P\left(\max_{k \in (\delta \sqrt{r},K\sqrt{r}) \cap \mathbb{N}_{\geq 0}} \frac{|\mu_{\alpha}(\xi_{\alpha,k}) - k|}{\log k} \leq  \frac{\sqrt{1+\epsilon}}{\pi} \right)\geq 1-\frac{c}{\epsilon}\sqrt{r}^{-\frac{\epsilon}{\lambda}},
\end{align}
for all $r \geq r_{0}$. In particular, for any $\epsilon > 0$,
\begin{align}\label{global rigidity upper bound for Bessel large alpha}
\lim_{r \to + \infty}\mathbb P\left(\max_{k \in (\delta \sqrt{r},K\sqrt{r})\cap \mathbb{N}_{\geq 0}} \frac{|\frac{1}{\pi}\int_{\alpha^{2}}^{\xi_{\alpha,k}} \frac{\sqrt{u-\alpha^{2}}}{2u}du - k|}{\log k} \leq  \frac{1}{\pi}+\epsilon \right)=1.
\end{align}
\end{theorem}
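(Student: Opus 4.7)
The plan is to obtain Theorem \ref{thm:rigidity Bessel} as an immediate specialization of the general rigidity Theorem \ref{thm:rigidity}. The only work required is (a) to confirm that all four parts of Assumptions \ref{assumptions} are satisfied for the family $\{X_{r}\}_{r\geq 1}$ of Bessel processes with the parameter choices
\[
\mathfrak{a}=\tfrac{1}{2\pi^{2}},\quad \upmu_{r}=\mu_{\alpha},\quad \upsigma_{r}^{2}=\sigma_{\alpha}^{2},\quad \eta_{r,1}=\alpha^{2}/r,\quad \eta_{r,2}=K,\quad \delta_{r}=\delta,
\]
and (b) to translate the abstract conclusion \eqref{estimate of main thm} into the concrete form \eqref{rigidity inside thm Bessel}. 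Task (a) is essentially already carried out in the paragraph preceding Theorem \ref{thm:rigidity Bessel}, so the writing of the proof reduces to cleanly assembling the pieces.

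For Assumptions \ref{assumptions}, the exponential moment bound \eqref{expmomentbound} follows from parts \ref{item 1 in thm} and \ref{item 2 in thm} of Theorem \ref{thm:s1 neq 0} applied with $m=1$ and $u_{1}=\gamma\in[-M,M]$: part \ref{item 1 in thm} handles the regime where $\alpha^{2}/r$ stays bounded away from $0$, and part \ref{item 2 in thm} handles the regime $\alpha^{2}/r\to 0$, together covering every sequence $r\to\infty$ with $\alpha^{2}/r$ bounded. Parts (2) and (4) reduce to elementary monotonicity and limit computations using the explicit formulas \eqref{def of mu}--\eqref{def of sigma}. For (3), the crucial identity $(\sigma_{\alpha}^{2}\circ\mu_{\alpha}^{-1})(k)=\tfrac{1}{2\pi^{2}}\log k+\bigO(1)$ is derived by observing that $\mu_{\alpha}^{-1}(k)\in[\alpha^{2}+cr,Cr]$ on the relevant range so $\sigma_{\alpha}^{2}(\mu_{\alpha}^{-1}(k))=\tfrac{\log r}{4\pi^{2}}+\bigO(1)$, combined with $k\asymp\sqrt{r}$ giving $\log r=2\log k+\bigO(1)$; this is where the constant $\mathfrak{a}=1/(2\pi^{2})$ emerges. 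Concavity and monotonicity of $\sigma_{\alpha}^{2}\circ\mu_{\alpha}^{-1}$ follow from direct differentiation.

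With these inputs, Theorem \ref{thm:rigidity} produces a bound in which the abstract quantities simplify as $\sqrt{2/\mathfrak{a}}=2\pi$ and $(\sigma_{\alpha}^{2}\circ\mu_{\alpha}^{-1})(k)=\tfrac{\log k}{2\pi^{2}}(1+o(1))$; hence $|\mu_{\alpha}(\xi_{\alpha,k})-k|/\log k$ is controlled by $\tfrac{\sqrt{1+\epsilon'}}{\pi}(1+o(1))$, and absorbing the $(1+o(1))$ factor into a small enlargement $\epsilon'\mapsto\epsilon$ yields \eqref{rigidity inside thm Bessel}. The probability factor uses $\mu_{\alpha}((\eta_{r,1}+\delta)r)\asymp\sqrt{r}$, so $\mu_{\alpha}((\eta_{r,1}+\delta)r)^{-\epsilon/\lambda}\asymp \sqrt{r}^{-\epsilon/\lambda}$, and the index range $I_{r}$ of Theorem \ref{thm:rigidity} contains $(\delta\sqrt{r},K\sqrt{r})\cap\mathbb{N}_{\geq 0}$ after routine relabeling of the free constants. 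The statement \eqref{global rigidity upper bound for Bessel large alpha} then follows from \eqref{rigidity inside thm Bessel} by letting $r\to\infty$ with $\epsilon$ fixed. I do not anticipate any genuine obstacle; the one point that requires care is the uniformity in $\alpha^{2}/r$ as it transitions from a bounded regime down to $0$, which is precisely what parts \ref{item 1 in thm} and \ref{item 2 in thm} of Theorem \ref{thm:s1 neq 0} are engineered to deliver.
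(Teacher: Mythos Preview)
Your proposal is correct and follows essentially the same route as the paper: verify Assumptions \ref{assumptions} for the Bessel family (as done in Section \ref{section: rigidity of large alpha Bessel process}), apply Theorem \ref{thm:rigidity}, then convert $(\sigma_{\alpha}^{2}\circ\mu_{\alpha}^{-1})(k)$ to $\tfrac{\log k}{2\pi^{2}}(1+o(1))$ and $\mu_{\alpha}((\eta_{r,1}+\delta)r)^{-\epsilon/\lambda}$ to $\asymp\sqrt{r}^{-\epsilon/\lambda}$, absorbing the $o(1)$ into a slight enlargement of $\epsilon$. The paper carries out the relabeling of the index range a bit more explicitly (writing $I_{r}\supseteq(c_{\delta}\sqrt{r},c_{K}\sqrt{r})$ and arguing $c_{\delta}\to 0$, $c_{K}\to\infty$ as $\delta\to 0$, $K\to\infty$), but this is exactly the ``routine relabeling'' you describe.
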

\begin{proof}
Applying Theorem \ref{thm:rigidity} to the large $\alpha$ Bessel process, we infer that there exist $c'$, $\epsilon_{0}'$ and $r_{0}'$ such that 
\begin{multline*}
\mathbb P\left(\max_{k \in I_{r} \cap \mathbb{N}_{\geq 0}} \frac{|\mu_{\alpha}(\xi_{\alpha,k}) - k|}{(\sigma_\alpha^2 \circ \mu_\alpha^{-1})(k)} \leq  2\pi \sqrt{1+\epsilon'} \right) \geq 1-\frac{c'\mu_{\alpha}(\alpha^{2}+\delta r)^{-\frac{\epsilon'}{\lambda}}}{\epsilon'} \\
= 1-\frac{c'}{\epsilon'}\bigg( \frac{\sqrt{r}}{\pi}\int_{a^{2}}^{a^{2}+\delta} \frac{\sqrt{u-a^{2}}}{2u}du  \bigg)^{-\frac{\epsilon'}{\lambda}} \geq 1-\frac{\hat{c}}{\epsilon'}\sqrt{r}^{-\frac{\epsilon'}{\lambda}},
\end{multline*}
for all $r \geq r_{0}'$ and $\epsilon' \in (0,\epsilon_{0}']$, where $\hat{c}:= c' \sup_{r \geq r_{0}', \epsilon' \in (0,\epsilon_{0}']} \big( \frac{1}{\pi}\int_{a^{2}}^{a^{2}+\delta} \frac{\sqrt{u-a^{2}}}{2u}du  \big)^{-\frac{\epsilon'}{\lambda}}< + \infty$ and
\begin{align*}
I_{r} = (\mu_{\alpha}(\alpha^{2}+2\delta r),\mu_{\alpha}(Kr-2\delta r)) =  \bigg( \frac{\sqrt{r}}{\pi}\int_{a^{2}}^{a^{2}+2\delta} \frac{\sqrt{u-a^{2}}}{2u}du \, , \, \frac{\sqrt{r}}{\pi}\int_{a^{2}}^{K-2\delta} \frac{\sqrt{u-a^{2}}}{2u}du \bigg).
\end{align*}
Note that 
\begin{align*}
I_{r} \supseteq (c_{\delta}\sqrt{r}, c_{K}\sqrt{r}), \quad \mbox{where } c_{\delta} := \sup_{r \geq r_{0}} \frac{1}{\pi}\int_{a^{2}}^{a^{2}+2\delta} \frac{\sqrt{u-a^{2}}}{2u}du, \; c_{K} := \inf_{r \geq r_{0}} \frac{1}{\pi}\int_{a^{2}}^{K-2\delta} \frac{\sqrt{u-a^{2}}}{2u}du.
\end{align*}
Let $\delta$ be sufficiently small and/or let $K$ be sufficiently large such that $c_{\delta}<c_{K}$. Then,
\begin{align*}
\mathbb P\left(\max_{k \in (c_{\delta}\sqrt{r}, c_{K}\sqrt{r}) \cap \mathbb{N}_{\geq 0}} \frac{|\mu_{\alpha}(\xi_{\alpha,k}) - k|}{(\sigma_\alpha^2 \circ \mu_\alpha^{-1})(k)} \leq  2\pi \sqrt{1+\epsilon'} \right) \geq 1-\frac{\hat{c}}{\epsilon'}\sqrt{r}^{-\frac{\epsilon'}{\lambda}},
\end{align*}
for all $r \geq r_{0}'$ and $\epsilon' \in (0,\epsilon_{0}']$. Using that (cf. \eqref{upsigmaupmuinv})
\begin{align*}
\max_{k \in (c_{\delta}\sqrt{r}, c_{K}\sqrt{r}) \cap \mathbb{N}_{\geq 0}} \bigg| (\sigma_\alpha^2 \circ \mu_\alpha^{-1})(k)\Big( \frac{\log k}{2\pi^2} \Big)^{-1} -1 \bigg| = \bigO\Big( \frac{1}{\log r}\Big) \qquad \mbox{as } r \to + \infty,
\end{align*}
we conclude that there exists a $c_0 >0$ such that
\begin{align*}
\mathbb P\left(\max_{k \in (c_{\delta}\sqrt{r}, c_{K}\sqrt{r}) \cap \mathbb{N}_{\geq 0}} \frac{|\mu_{\alpha}(\xi_{\alpha,k}) - k|}{\log k} \leq  \frac{\sqrt{1+\epsilon'}}{\pi(1 - \frac{c_0}{\log{r}})} \right)\geq 1-\frac{\hat{c}}{\epsilon'}\sqrt{r}^{-\frac{\epsilon'}{\lambda}},
\end{align*}
for all sufficiently large $r$ and all $\epsilon' \in (0,\epsilon_{0}']$. Hence for each $\epsilon'\in (0,\epsilon_{0}']$,
\begin{align*}
\mathbb P\left(\max_{k \in (c_{\delta}\sqrt{r}, c_{K}\sqrt{r}) \cap \mathbb{N}_{\geq 0}} \frac{|\mu_{\alpha}(\xi_{\alpha,k}) - k|}{\log k} \leq  \frac{\sqrt{1+\epsilon}}{\pi} \right)\geq 1-\frac{2\hat{c}}{\epsilon}\sqrt{r}^{-\frac{\epsilon'}{\lambda}}=1-\frac{2\,\hat{c}\,e^{\frac{c_{1}}{2\lambda}}}{\epsilon}\sqrt{r}^{-\frac{\epsilon}{\lambda}},
\end{align*}
for all sufficiently large $r$, where $\epsilon = \epsilon'+\frac{c_{1}}{\log r}$ and $c_{1}$ is sufficiently large but fixed. Since $c_{\delta}$ can be made arbitrarily small and $c_{K}$ can be made arbitrarily large by choosing $\delta$ sufficiently small and $K$ sufficiently large, the claim follows. 
\end{proof}
If we let $\alpha \to + \infty$ at a slow speed so that $\alpha^{2}/\sqrt{r} \to 0$ as $r \to + \infty$ in Theorem \ref{thm:rigidity Bessel}, we obtain the following corollary, which is consistent with the known rigidity result \cite[Theorem 1.6]{ChCl4} for the Bessel process with $\alpha$ fixed.
\begin{corollary}[Matching with the rigidity of the Bessel process with $\alpha$ bounded] \label{coro: rigi bessel alpha bounded} 
Assume that $\alpha$ is an increasing function of $r$ such that  $\alpha^{2}/\sqrt{r} \to 0$ and $\alpha \to + \infty$ as $r \to + \infty$, and let $\xi_{\alpha,1}<\xi_{\alpha,2}< \cdots$ be the points in the Bessel point process $X_{r}$. For any $K > \delta>0$ and any $\epsilon >0$, we have
\begin{align*}
\lim_{r \to + \infty}\mathbb P\left( \max_{k \in I_{r}\cap \mathbb{N}_{\geq 0}} \frac{|\frac{\sqrt{\xi_{\alpha,k}}}{\pi} - \frac{\alpha}{2} - k|}{\log k} \leq  \frac{1}{\pi}+\epsilon \right)=1, \quad \mbox{ where } \quad \mathcal{I}_{r} = \big( \delta \sqrt{r} \, , \, K \sqrt{r} \big).
\end{align*}
\end{corollary}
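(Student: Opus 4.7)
The plan is to reduce this corollary to Theorem \ref{thm:rigidity Bessel} by replacing $\mu_{\alpha}(\xi)$ with its small-$a$ approximation $\tilde{\mu}_{\alpha}(\xi)=\frac{\sqrt{\xi}}{\pi}-\frac{\alpha}{2}$, using the comparison \eqref{asymptotics as a is small mean var cov}.

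First I would check that Theorem \ref{thm:rigidity Bessel} applies: since $\alpha^{2}/\sqrt{r}\to 0$ one has $\alpha^{2}/r\to 0$, so for any $K>\delta>0$ the hypothesis $K>\sup_{r\geq r_{1}}\alpha^{2}/r$ holds for some $r_{1}$, and I would apply \eqref{global rigidity upper bound for Bessel large alpha} with slightly shrunken $(\delta/2,2K)$ in place of $(\delta,K)$ to allow some slack. This supplies, for every $\epsilon>0$, a high-probability event $\mathcal{A}_{r}$ on which
\[
|\mu_{\alpha}(\xi_{\alpha,k})-k|\leq\bigl(\tfrac{1}{\pi}+\tfrac{\epsilon}{2}\bigr)\log k \qquad \text{for all } k\in(\tfrac{\delta}{2}\sqrt{r},2K\sqrt{r})\cap\mathbb{N}_{\geq 0}.
\]
The second step is a short bootstrap to localize the points: on $\mathcal{A}_{r}$, the bound $\mu_{\alpha}(\xi_{\alpha,k})=k+\bigO(\log r)$ combined with the explicit formula \eqref{def of mu} (which behaves like $\sqrt{rx}/\pi$ as $a\to 0$, uniformly for $x$ in compacts of $(0,+\infty)$) forces $\xi_{\alpha,k}/r$ to lie in an interval $[c_{1},c_{2}]\subset(0,+\infty)$ independent of $r$, for every $k\in(\delta\sqrt{r},K\sqrt{r})$.

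Once the points are localized, the comparison \eqref{asymptotics as a is small mean var cov} gives
\[
\bigl|\mu_{\alpha}(\xi_{\alpha,k})-\tilde{\mu}_{\alpha}(\xi_{\alpha,k})\bigr| = \bigO(a^{2}\sqrt{r}) = \bigO(\alpha^{2}/\sqrt{r}) = o(1),
\]
uniformly in $k$. The triangle inequality then yields, on $\mathcal{A}_{r}$,
\[
\frac{|\tilde{\mu}_{\alpha}(\xi_{\alpha,k})-k|}{\log k}\leq\frac{|\mu_{\alpha}(\xi_{\alpha,k})-k|}{\log k}+\frac{o(1)}{\log(\delta\sqrt{r})}\leq\tfrac{1}{\pi}+\tfrac{\epsilon}{2}+o(1),
\]
which is $\leq\tfrac{1}{\pi}+\epsilon$ for all sufficiently large $r$. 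Since $\mathbb{P}(\mathcal{A}_{r})\to 1$, the claim follows upon recalling $\tilde{\mu}_{\alpha}(\xi_{\alpha,k})=\sqrt{\xi_{\alpha,k}}/\pi-\alpha/2$.

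The only step I expect to require any care is the bootstrap that traps $\xi_{\alpha,k}/r$ inside a fixed compact subset of $(0,+\infty)$: the estimate \eqref{asymptotics as a is small mean var cov} is uniform only for $x$ in such subsets, so one cannot substitute the random argument $\xi_{\alpha,k}/r$ without first controlling it, and this control must be supplied by the rigidity bound itself. Everything else is a direct combination of Theorem \ref{thm:rigidity Bessel} with \eqref{asymptotics as a is small mean var cov}, using that $\log(\delta\sqrt{r})\to\infty$ absorbs the $o(1)$ error term.
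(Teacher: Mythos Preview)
Your proposal is correct and follows the same approach as the paper, which simply states that the claim follows from Theorem \ref{thm:rigidity Bessel} and the first expansion in \eqref{asymptotics as a is small mean var cov}. You have filled in the details the paper omits, in particular the bootstrap step localizing $\xi_{\alpha,k}/r$ to a compact set so that \eqref{asymptotics as a is small mean var cov} applies uniformly; this is a genuine detail worth spelling out, and your treatment of it is sound.
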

\begin{proof}
The claim follows easily from Theorem \ref{thm:rigidity Bessel} and the first expansion in \eqref{asymptotics as a is small mean var cov}. 
\end{proof}

\paragraph{Outline.} The proof of Theorem \ref{thm:s1 neq 0} is divided into several steps that are carried out in Sections \ref{section: model rh problem}-\ref{Section: integration s1 >0}. In Section \ref{section: model rh problem}, we present a differential identity from \cite{CharlierBessel} that relates $E_{\alpha}$ to a model Riemann-Hilbert (RH) problem whose solution is denoted by $\Phi$. In Section \ref{Section: Steepest descent with s1>0}, we obtain asymptotics for $\Phi$ in the three regimes of Theorem \ref{thm:s1 neq 0} via the Deift--Zhou \cite{DeiftZhou} steepest descent method. As mentioned in the introduction, the regime $\alpha/\sqrt{r} \to 0$ (part \ref{item 2 in thm} of Theorem \ref{thm:s1 neq 0}) requires a different analysis from the regimes \ref{item 1 in thm} and \ref{item 3 in thm} where $\alpha/\sqrt{r}$ remains bounded away from $0$. In Section \ref{Section: integration s1 >0}, we substitute the asymptotics of $\Phi$ into the differential identity to obtain large $r$ asymptotics for $E_{\alpha}(r \vec{x},\vec{u})$. The proof of Theorem \ref{thm:rigidity} is independent from the other proofs and is given in Section \ref{Section: rigidity}. The appendix collects three model RH problems that are important for the construction of local parametrices in the steepest descent analysis. Of particular interest is that we describe and employ the Bessel model problem in the regime where the order $\alpha$ of the Bessel functions tends to infinity. This appears to be the first time that the Bessel model problem is used in this way.

\section*{Acknowledgements}
We are grateful to Gr\'{e}gory Schehr for suggesting this problem to us, for interesting discussions and for pointing out a mistake in an early draft of this manuscript. CC acknowledges support from  the European Research Council, Grant Agreement No. 682537.
JL acknowledges support from the European Research Council, Grant Agreement No. 682537, the Swedish Research Council, Grant No. 2015-05430, and the Ruth and Nils-Erik Stenb\"ack Foundation.

\section{Background from \cite{ChDoe,CharlierBessel}}\label{section: model rh problem}
We will prove Theorem \ref{thm:s1 neq 0} via a differential identity that was established in \cite{CharlierBessel}. This identity relates $E_{\alpha}$ with the solution $\Phi$ to a model RH problem. We first recall the properties of $\Phi$ in Subsection \ref{subsection: model RH problem Phi}, and then present the differential identity in Subsection \ref{subsection: diff identity}.

\subsection{Model RH problem $\Phi$ from \cite{ChDoe}}\label{subsection: model RH problem Phi}
The model RH problem for $\Phi$ depends on parameters $\alpha$, $\vec{x}=(x_{1},\ldots,x_{m})$, $\vec{s}=(s_{1},\ldots,s_{m})$ satisfying
\begin{equation}\label{param for phi}
\alpha > -1, \qquad 0<x_{1}< \cdots <x_{m}<+\infty, \qquad s_{1},\dots,s_{m} \in [0,+\infty),
\end{equation}
and is as follows.

\begin{figure}[t]
    \begin{center}
\begin{tikzpicture}
\draw[fill] (0,0) circle (0.05);
\draw (0,0) -- (8,0);
\draw (0,0) -- (120:3);
\draw (0,0) -- (-120:3);
\draw (0,0) -- (-3,0);

\draw[fill] (3,0) circle (0.05);
\draw[fill] (6.5,0) circle (0.05);
\draw[fill] (8,0) circle (0.05);

\node at (0.15,-0.3) {$-x_{m}$};
\node at (3,-0.3) {$-x_{2}$};
\node at (6.5,-0.3) {$-x_{1}$};
\node at (8.4,-0.3) {$0=x_{0}$};
\node at (-3,-0.3) {$-\infty=-x_{m+1}$};

\draw[black,arrows={-Triangle[length=0.18cm,width=0.12cm]}]
(-120:1.5) --  ++(60:0.001);
\draw[black,arrows={-Triangle[length=0.18cm,width=0.12cm]}]
(120:1.3) --  ++(-60:0.001);
\draw[black,arrows={-Triangle[length=0.18cm,width=0.12cm]}]
(180:1.5) --  ++(0:0.001);

\draw[black,arrows={-Triangle[length=0.18cm,width=0.12cm]}]
(0:1.5) --  ++(0:0.001);
\draw[black,arrows={-Triangle[length=0.18cm,width=0.12cm]}]
(0:5) --  ++(0:0.001);
\draw[black,arrows={-Triangle[length=0.18cm,width=0.12cm]}]
(0:7.3) --  ++(0:0.001);

\end{tikzpicture}
    \caption{\label{figPhi}The contour $\Sigma_{\Phi}$ with $m=3$.}
\end{center}
\end{figure}
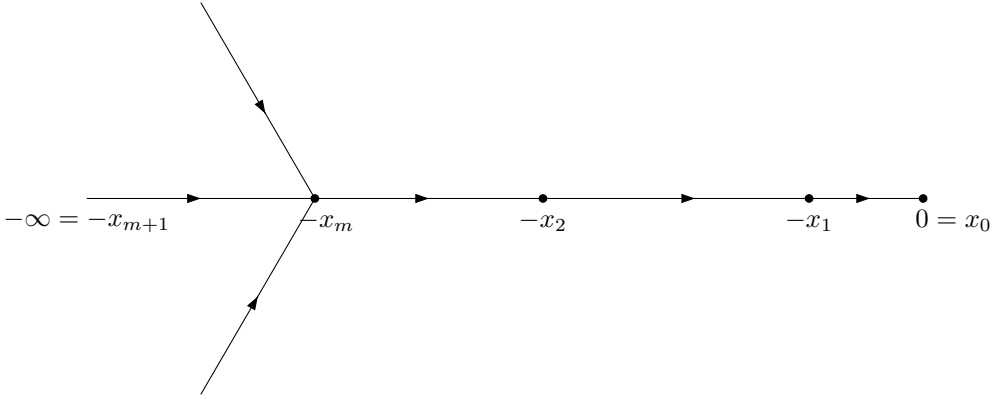

\vspace{-0.2cm}\subsubsection*{RH problem for $\Phi (\cdot) = \Phi(\cdot;\vec{x},\vec{s},\alpha)$}
\begin{itemize}
\item[(a)] $\Phi : \mathbb{C}\setminus \Sigma_{\Phi} \to \mathbb{C}^{2\times 2}$ is analytic, where the contour is given by
\begin{align*}
\Sigma_{\Phi} = (-\infty,0] \cup (-x_{m}+ [0,e^{\frac{2\pi i}{3}}\infty)) \cup (-x_{m}+ [0,e^{-\frac{2\pi i}{3}}\infty)),
\end{align*}
and is oriented as shown in Figure \ref{figPhi}.
\item[(b)] Let $\Phi_{+}$ (resp. $\Phi_{-}$) denote the left (resp. right) boundary values of $\Phi$. Here, ``left" and ``right" are to be understood with respect to the orientation of $\Sigma_{\Phi}$. $\Phi_{+}$ and $\Phi_{-}$ are continuous on $\Sigma_{\Phi}\setminus \{0,-x_{1},\dots,-x_{m}\}$ and are related by:
\begin{align}
& \Phi_{+}(z) = \Phi_{-}(z) \begin{pmatrix}
1 & 0 \\ e^{\pi i \alpha} & 1
\end{pmatrix}, & & z \in -x_{m}+ (0,e^{\frac{2\pi i}{3}}\infty), \label{jumps Phi upper} \\
& \Phi_{+}(z) = \Phi_{-}(z) \begin{pmatrix}
0 & 1 \\ -1 & 0
\end{pmatrix}, & & z \in (-\infty,-x_{m}), \\
& \Phi_{+}(z) = \Phi_{-}(z) \begin{pmatrix}
1 & 0 \\ e^{-\pi i \alpha} & 1
\end{pmatrix}, & & z \in -x_{m}+ (0,e^{-\frac{2\pi i}{3}}\infty), \label{jumps Phi lower} \\
& \Phi_{+}(z) = \Phi_{-}(z) \begin{pmatrix}
e^{\pi i \alpha} & s_{j} \\ 0 & e^{-\pi i \alpha}
\end{pmatrix}, & & z \in (-x_{j},-x_{j-1}), \label{last jumps of Phi}
\end{align}
where $j =1,\dots,m$.
\item[(c)] As $z \to \infty$, we have 
\begin{equation}\label{Phi inf}
\Phi(z) = \Big(I + \Phi_{1}z^{-1}+\bigO(z^{-2})\Big)z^{-\frac{\sigma_{3}}{4}}Ne^{\sqrt{z}\sigma_{3}},
\end{equation}
where the principal branch is chosen for each root, the matrix $\Phi_{1} = \Phi_{1}(\vec{x},\vec{s},\alpha)$ is independent of $z$ and traceless, and 
\begin{align*}
\sigma_{3} = \begin{pmatrix}
1 & 0 \\ 0 & -1
\end{pmatrix}, \qquad N = \frac{1}{\sqrt{2}}\begin{pmatrix}
1 & i \\ i & 1
\end{pmatrix}.
\end{align*}

As $z$ tends to $-x_{j}$, $j \in \{1,\dots,m\}$, we have
\begin{equation}\label{def of Gj}
\Phi(z) = G_{j}(z) \begin{pmatrix}
1 & \frac{s_{j+1}-s_{j}}{2\pi i} \log (z+x_{j}) \\ 0 & 1
\end{pmatrix} V_{j}(z) e^{\frac{\pi i \alpha}{2}\theta(z)\sigma_{3}}H_{-x_{m}}(z),
\end{equation}
where $G_{j}(z) = G_{j}(z;\vec{x},\vec{s},\alpha)$ is analytic in a neighborhood of $-x_{j}$ and satisfies $\det G_{j} \equiv 1$, and $\theta$, $V_{j}$, $H_{-x_{m}}$ are piecewise constant and given by
\begin{align}\label{def of theta}
& \theta(z) = \left\{  \begin{array}{l l}
+1, & \mbox{Im } z > 0, \\
-1, & \mbox{Im } z < 0,
\end{array} \right. & & V_{j}(z) = \left\{  \begin{array}{l l}
I, & \mbox{Im } z > 0, \\
\begin{pmatrix}
1 & -s_{j} \\ 0 & 1
\end{pmatrix}, & \mbox{Im } z < 0,
\end{array} \right.
\end{align}
and
\begin{equation}\label{def of H}
H_{-x_{m}}(z) = \left\{  \begin{array}{l l}

I, & \mbox{ for } -\frac{2\pi}{3}< \arg(z+x_{m})< \frac{2\pi}{3},\\

\begin{pmatrix}
1 & 0 \\
-e^{\pi i \alpha} & 1 \\
\end{pmatrix}, & \mbox{ for } \frac{2\pi}{3}< \arg(z+x_{m})< \pi, \\[0.3cm]

\begin{pmatrix}
1 & 0 \\
e^{-\pi i \alpha} & 1 \\
\end{pmatrix}, & \mbox{ for } -\pi< \arg(z+x_{m})< -\frac{2\pi}{3}.
\end{array} \right.
\end{equation} 

As $z$ tends to $0$, we have
\begin{equation}\label{def of G_0}
\Phi(z) = G_{0}(z)z^{\frac{\alpha}{2}\sigma_{3}} \begin{pmatrix}
1 & s_{1}h(z) \\ 0 & 1
\end{pmatrix},
\end{equation}
where $\det G_{0} \equiv 1$, $G_{0}(z) = G_{0}(z;\vec{x},\vec{s},\alpha)$ is analytic in a neighborhood of $0$, and 
\begin{equation}\label{def of h}
h(z) = \left\{ \begin{array}{l l}
\displaystyle \frac{1}{2 i \sin(\pi \alpha)}, & \alpha >-1, \; \alpha \notin \mathbb{N}_{\geq 0}, \\[0.35cm]
\displaystyle \frac{(-1)^{\alpha}}{2\pi i} \log z, & \alpha \in \mathbb{N}_{\geq 0}.
\end{array} \right.
\end{equation}
\end{itemize}
The solution $\Phi$ to the above RH problem exists and is unique, and satisfies $\det \Phi \equiv 1$, see \cite{ChDoe}.

\subsection{Differential identity from \cite{CharlierBessel}}\label{subsection: diff identity}
Recall that $E_{\alpha}(\vec{x},\vec{u})$ is the exponential moment \eqref{def of E alpha}, where $\vec{u}=(u_{1},\ldots,u_{m}) \in \mathbb{R}^{m}$. To relate $E_{\alpha}(\vec{x},\vec{u})$ with the solution $\Phi$ of the model RH problem of Subsection \ref{subsection: model RH problem Phi}, we define $\vec{s}=(s_{1},\ldots,s_{m})$ in terms of $\vec{u}$ as follows:
\begin{equation}\label{def tj thm s1 neq 0}
s_{j} = e^{u_{j}+u_{j+1}+\cdots+u_{m}}, \qquad j=1, \ldots, m.
\end{equation}
Since $u_{1},\ldots,u_{m} \in \mathbb{R}$, we have $s_{1},\ldots,s_{m} \in (0,+\infty)$, and in particular the $s_{j}$'s meet the requirement in \eqref{param for phi}. For convenience, we define
\begin{equation}\label{def of F alpha}
F_{\alpha}(\vec{x},\vec{s}) = E_{\alpha}(\vec{x},\vec{u}).
\end{equation}
Based on the facts that
\begin{itemize}
\item \vspace{-0.1cm} $F_{\alpha}(\vec{x},\vec{s})$ can be written as a Fredholm determinant with $m$ discontinuities and is an entire function of $s_{1},\ldots,s_{m}$ (see \cite[Theorem 2]{Soshnikov}), 
\item \vspace{-0.1cm} the kernel of this Fredholm determinant is \textit{integrable} in the sense of \cite{IIKS}, 
\item \vspace{-0.1cm} $\Phi$ admits a Lax pair (studied in \cite{ChDoe}),
\end{itemize} 
the following differential identity was obtained in \cite[eqs (2.34)--(2.37)]{CharlierBessel} for $k = 1, \dots, m$:
\vspace{-0.1cm}
\begin{equation}\label{DIFF identity final form general case}
\partial_{s_{k}} \log F_{\alpha}(r\vec{x},\vec{s})= K_{\infty} + \sum_{j=1}^{m}K_{-x_{j}} + K_{0}, \qquad \alpha \neq 0,
\end{equation}
where
\vspace{-0.2cm}\begin{align}
& K_{\infty} = -\frac{i}{2}\partial_{s_{k}} \Phi_{1,12}(r\vec{x},\vec{s},\alpha), \label{K inf} \\
& K_{-x_{j}} = \frac{s_{j+1}-s_{j}}{2\pi i} \Big( G_{j,11}\partial_{s_{k}} G_{j,21} - G_{j,21}\partial_{s_{k}} G_{j,11} \Big)(-rx_{j};r\vec{x},\vec{s},\alpha), \label{K -xj} \\
& K_{0} = \alpha \Big( G_{0,21} \partial_{s_{k}}G_{0,12} - G_{0,11} \partial_{s_{k}}G_{0,22} \Big)(0;r\vec{x},\vec{s},\alpha). \label{K 0}
\end{align}
We mention that the above differential identity is not valid for $\alpha=0$, but since our analysis deals with the case $\alpha \to + \infty$, this fact is not important for us.

\section{Large $r$ asymptotics for $\Phi$ with $\alpha \to + \infty$: regimes \ref{item 1 in thm} and \ref{item 3 in thm}}\label{Section: Steepest descent with s1>0}
In this section we perform an asymptotic analysis of $\Phi(rz;r\vec{x},\vec{s},\alpha)$ in the two regimes of the parameters that are relevant to prove parts \ref{item 1 in thm} and \ref{item 3 in thm} of Theorem \ref{thm:s1 neq 0}. In Subsections \ref{subsection: g function s1 neq 0}--\ref{subsection Small norm s1 neq 0}, we deal with the regime which is relevant for part $1$; in Subsection \ref{subsection: regime 3}, we indicate how to adapt this analysis to the regime in part \ref{item 3 in thm}.

\medskip Let $m \in \mathbb{N}_{>0}$, $a \in (0,+\infty)$, $\vec{u}=(u_1,\dots,u_m) \in \mathbb{R}^{m}$ and $\vec{x} = (x_{1},\dots,x_{m}) \in \mathbb{R}_{\mathrm{ord}}^{+,m}$, be such that
\begin{align*}
0 < x_{1} < \cdots < x_{n-1} < a^{2} < x_{n} < \cdots < x_{m},
\end{align*}
with $n \in \{1,\dots,m+1\}$, $x_{0}:=0$, $x_{m+1}:=+\infty$, and define $\vec{s}=(s_{1},\ldots,s_{m})$ in terms of $\vec{u}$ as in \eqref{def tj thm s1 neq 0}. For $r>0$, define $\alpha = a \sqrt{r}$. In this section, we first obtain asymptotics for $\Phi(rz;r\vec{x},\vec{s},\alpha)$ as $r \to + \infty$ uniformly for $(x_{1},\ldots,x_{n-1},a^{2},x_{n},\ldots,x_{m})$ in compact subsets of $\mathbb{R}_{\mathrm{ord}}^{+,m+1}$, uniformly for $u_{1},\ldots,u_{m}$ in compact subsets of $\mathbb{R}$, and uniformly for $z \in \mathbb{C}$. This analysis is based on the Deift--Zhou steepest descent method \cite{DeiftZhou} and is carried out in Subsections \ref{subsection: g function s1 neq 0}--\ref{subsection Small norm s1 neq 0}. 

\medskip In what follows, we assume without loss of generality that $a^{2}<x_{m}$, or in other words that $n \in \{1,2,\dots,m\}$. Indeed, if $a^{2}>x_{m}$, then it suffices to increase $m$ by $1$ and to define $x_{m}=a^{2}+1$ and $s_{m}=1$. 

\subsection{First transformation: $\Phi \to T$}\label{subsection: g function s1 neq 0}
As mentioned in the introduction, as $\alpha = a \sqrt{r}$ gets large, the Bessel process increasingly favors point configurations with fewer points near $0$. It turns out that a typical point configuration has its smallest point located near the soft edge $a^{2}$. Therefore, at least at a heuristic level, one expects the interval $(-\infty, -a^{2})$ to play a special role in the large $r$ analysis of $\Phi(rz;r\vec{x},\vec{s},a\sqrt{r})$.\footnote{The model RH problem for $\Phi$ is directly related to the reverse Bessel process for convenience. One needs to identify $(-\infty, -a^{2})$ in the reverse Bessel process with $(a^{2},+\infty)$ in the classical Bessel process.} The first transformation has multiple purposes: it normalizes the RH problem at $\infty$ and transforms the jumps that are not on $(-\infty,-a^{2})$ into exponentially decaying jumps. The $g$-function is the main ingredient of this transformation and is defined by
\vspace{-0.2cm}\begin{equation}\label{def of g s1 neq 0}
g(z)= f(z) + \theta(z)\frac{\pi i a}{2}, \qquad f(z) = \int_{-a^{2}}^{z} \frac{\sqrt{s+a^{2}}}{2s}ds,
\end{equation}
where the path does not intersect $(-\infty,-a^{2})\cup(0,+\infty)$, and $\theta$ is defined in \eqref{def of theta}.  We summarize the properties of the $g$-function in the following lemma.
\begin{lemma}\label{lemma: g function}The $g$-function is analytic in $\mathbb{C}\setminus (-\infty,0]$ and satisfies the following properties:
\begin{align}
& g_{+}(z)+g_{-}(z) = 0, & & \mbox{for } z \in (-\infty,-a^{2}), \label{g prop 1} \\
& g_{+}(z)-g_{-}(z) = \pi i a +f_{+}(z)-f_{-}(z), & & \mbox{for } z \in (-\infty,-a^{2}), \label{g prop 2} \\
& g_{+}(z)-g_{-}(z) = \pi i a, & & \mbox{for } z \in (-a^{2},0), \label{g prop 3} \\
& g(z) = \sqrt{z} - \frac{a^{2}}{2\sqrt{z}} + \bigO(z^{-3/2}), & & \mbox{as } z \to \infty, \label{g prop 5} \\
& g(z) = \frac{a}{2}\log z + g_{0}+\bigO(z), & & \mbox{as } z \to 0, \label{g prop 6} \\
& g(z) = \overline{g(\bar{z})}, & & \mbox{for } z \in \mathbb{C} \setminus (-\infty,0], \label{g prop 7}
\end{align}
where $g_{0} \in \mathbb{R}$. Furthermore, there exists $\epsilon > 0$ and an open set $\mathcal{V}\subset \mathbb{C}$ such that
\begin{align*}
\{ z \in \mathbb{C} : |z| \geq \tfrac{1}{\epsilon} \} \cup \{z \in \mathbb{C}: \re z < -a^{2} \mbox{ and } |\im z |\leq \epsilon\} \subset \mathcal{V},
\end{align*}
and such that
\begin{align}
& \re f(z) \geq 0, & & \mbox{for all } z \in \mathcal{V}, \label{re f} \\
& \re f(z) < 0, & &  \mbox{for all } z \in (-a^{2},0), \label{re f 2}
\end{align}
with equality in \eqref{re f} if and only if $z \in (-\infty,-a^{2})$.
\end{lemma}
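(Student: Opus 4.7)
The plan is to begin by finding an explicit antiderivative of $\frac{\sqrt{s+a^{2}}}{2s}$ via the substitution $s+a^{2}=t^{2}$, which yields
\[
F(z):=\sqrt{z+a^{2}}+\frac{a}{2}\log\frac{\sqrt{z+a^{2}}-a}{\sqrt{z+a^{2}}+a}
\]
with principal branches of $\sqrt{\cdot}$ (cut on $(-\infty,-a^{2}]$) and of $\log$; direct differentiation confirms $F'(z)=\sqrt{z+a^{2}}/(2z)$. Since $F(z)\to\tfrac{a}{2}\log(-1)=\pm i\pi a/2$ as $z\to -a^{2}$ from the upper (respectively lower) half-plane, one has $f(z)=F(z)\mp i\pi a/2$ in the two half-planes, so $g(z)=f(z)+\theta(z)i\pi a/2$ agrees with $F(z)$ throughout $\C\setminus(-\infty,0]$. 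This establishes analyticity at once. The jumps \eqref{g prop 1}--\eqref{g prop 3} then follow: on $(-\infty,-a^{2}]$, the sign change of $\sqrt{z+a^{2}}$ across the cut together with the corresponding log identity forces $F_{+}+F_{-}=0$ and gives the formula for $F_{+}-F_{-}$, after which adding the $\theta\cdot i\pi a/2$ contribution produces \eqref{g prop 1}--\eqref{g prop 2}; on $(-a^{2},0)$, $F$ has no jump and the $\theta$ term alone gives \eqref{g prop 3}.

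For the asymptotics, I would switch to $w=\sqrt{z+a^{2}}$, in which $F(w)=w+\tfrac{a}{2}\log\frac{w-a}{w+a}$. The series
\[
\tfrac{a}{2}\log\frac{w-a}{w+a}=-a\sum_{k\ge 0}\frac{(a/w)^{2k+1}}{2k+1}=-\frac{a^{2}}{w}+\bigO(w^{-3})
\]
combined with $w=\sqrt{z}+a^{2}/(2\sqrt{z})+\bigO(z^{-3/2})$ yields \eqref{g prop 5}. At $z=0$, the expansions $\sqrt{z+a^{2}}=a+z/(2a)+\bigO(z^{2})$ and $\frac{\sqrt{z+a^{2}}-a}{\sqrt{z+a^{2}}+a}=\frac{z}{4a^{2}}(1+\bigO(z))$ give $\log(\cdot)=\log z-\log(4a^{2})+\bigO(z)$, hence \eqref{g prop 6} with $g_{0}=a-a\log(2a)\in\R$. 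The Schwarz reflection \eqref{g prop 7} follows because the chosen branches of $\sqrt{\cdot}$ and $\log$ commute with complex conjugation in their domain of analyticity, and because $\theta(\bar z)=-\theta(z)$ makes the $\theta\cdot i\pi a/2$ correction also Schwarz-symmetric.

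For the sign statements, first observe that $\theta(z)i\pi a/2$ is purely imaginary, so $\re f=\re F$. In the coordinate $w=u+iv$ with $u\ge 0$,
\[
\re F(w)=u+\frac{a}{4}\log\frac{(u-a)^{2}+v^{2}}{(u+a)^{2}+v^{2}}.
\]
On $\{u=0\}$ (which corresponds to $z\in(-\infty,-a^{2})$), $\re F\equiv 0$. On $(0,a)\subset\R$ (which corresponds to $z\in(-a^{2},0)$), $\re F=u+\tfrac{a}{2}\log\frac{a-u}{a+u}<0$ because $\mathrm{artanh}(u/a)>u/a$ on $(0,1)$, proving \eqref{re f 2}. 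For \eqref{re f}, I would take $\mathcal V$ to be the union of the far-field region $\{|z|\ge 1/\epsilon\}$ and the tubular region $\{\re z<-a^{2},\,|\im z|\le\epsilon\}$. On the far-field piece, the expansion of $F$ at infinity gives $\re F(w)=u(1-a^{2}/|w|^{2})+\bigO(|w|^{-3})\ge 0$, with equality only on $u=0$. On the tubular piece, $w$ lies close to the positive imaginary axis, and the identity $\partial_{u}\re F(u+iv)|_{u=0}=\re F'(iv)=v^{2}/(v^{2}+a^{2})>0$ for $v\ne 0$ gives $\re F>0$ for small $u>0$.

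The main obstacle I expect is controlling $\re F$ uniformly on the tubular piece near $v=0$, where the first-order coefficient $v^{2}/(v^{2}+a^{2})$ degenerates. This is handled either by going to second order in the Taylor expansion in $u$, or more efficiently by analyzing directly from the explicit formula the zero set of $\re F$ in the right $w$-half-plane: one shows that this zero set consists of the imaginary axis together with a bounded arc emerging from the logarithmic singularity at $w=a$, the latter being confined to a region disjoint from both pieces of $\mathcal V$ once $\epsilon$ is sufficiently small. Uniformity of all estimates in $a$ (as needed later in Section 3) then follows from the explicit formulas and straightforward bookkeeping.
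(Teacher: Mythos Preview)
Your approach is correct and in several respects more explicit than the paper's. You compute the closed-form antiderivative $F(z)=\sqrt{z+a^{2}}+\tfrac{a}{2}\log\frac{\sqrt{z+a^{2}}-a}{\sqrt{z+a^{2}}+a}$ and identify $g\equiv F$; the paper simply declares properties \eqref{g prop 1}--\eqref{g prop 7} and \eqref{re f 2} to be ``straightforward computations'' without writing out a primitive. One small slip: where you say ``on $(-a^{2},0)$, $F$ has no jump,'' you mean $f$ --- the principal logarithm in $F$ does jump there (its argument lands on $(-1,0)$), and indeed $F_{+}-F_{-}=i\pi a$ matches \eqref{g prop 3} directly since $g=F$. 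Your conclusion is right, only the attribution of the jump is garbled.

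The substantive difference is in the proof of \eqref{re f}. The paper covers the strip $\{\re z<-a^{2},\,|\im z|\le\epsilon\}$ by three subregions: the far field via $\re f/\re\sqrt{z}=1+\bigO(z^{-1})$, a neighbourhood of $-a^{2}$ via $\re f\sim c\,\re(z+a^{2})^{3/2}$ with $c<0$, and a remaining compact strip where it runs a Cauchy--Riemann/compactness contradiction argument (if $\re f$ had zeros approaching the axis, the mean-value theorem would force $\partial_{y}\re f$ to vanish at some boundary point, contradicting monotonicity of $\im f_{+}$). Your route instead works entirely in the coordinate $w=\sqrt{z+a^{2}}$ with the explicit formula $\re F(w)=u+\tfrac{a}{4}\log\frac{(u-a)^{2}+v^{2}}{(u+a)^{2}+v^{2}}$: the first-derivative bound $\partial_{u}\re F|_{u=0}=v^{2}/(v^{2}+a^{2})>0$ handles the tubular region away from $w=0$, and near $w=0$ the expansion $\re F=-\tfrac{1}{3a^{2}}\re(w^{3})+\bigO(|w|^{5})=-\tfrac{u(u^{2}-3v^{2})}{3a^{2}}+\bigO(|w|^{5})$ is strictly positive on $\{0<u<|v|\}$ --- which is exactly the condition $\re z<-a^{2}$. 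This is the same local mechanism the paper invokes ($c=-\tfrac{1}{3a^{2}}$), but you reach it by Taylor-expanding the explicit $F$ rather than the integral. Your version buys explicit constants (e.g.\ $g_{0}=a-a\log(2a)$) and avoids the compactness argument; the paper's version is slightly more robust in that it never needs the closed form and would transfer verbatim to $g$-functions without one.
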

\begin{proof}
The properties \eqref{g prop 1}--\eqref{g prop 7} and \eqref{re f 2} follow directly from \eqref{def of g s1 neq 0} and some straightforward computations. Let us prove \eqref{re f}. Since $\re f(z) / \re\sqrt{z}=1+\bigO(z^{-1})$ as $z \to \infty$, there exists $\epsilon_{1}>0$ such that $\re f(z) \geq 0$ for all $|z| \geq \frac{1}{\epsilon_{1}}$, with equality if and only if $z \in (-\infty,\frac{1}{\epsilon_{1}}]$. On the other hand, as $z \to -a^{2}$, we have $\re f(z) \sim c\, \re(z+a^{2})^{3/2}$ for a certain $c<0$. Hence, there exists $\epsilon_{2} > 0$ such that $\re f(z) \geq 0$ for all $z \in \{z:\re z \in [-a^{2}-\epsilon_{2},-a^{2}) \mbox{ and } |\im z| \leq \epsilon_{2}\}$ with equality if and only if $z \in [-a^{2}-\epsilon_{2},-a^{2})$. Also, we note from \eqref{def of g s1 neq 0} that $f(z) = \overline{f(\overline{z})}$ and that $\im f_{+}(x)$ is decreasing as $x \in (-\infty,-a^{2})$ increases. Therefore, by Cauchy-Riemann, there exists $\epsilon_{3}>0$ such that $\re f(z) >0$ for all $z$ satisfying $\re z \in [-\frac{1}{\epsilon_{1}},-a^{2}-\epsilon_{2}]$ and $|\im z| \in (0,\epsilon_{3}]$. Indeed, suppose such an $\epsilon_3$ does not exist. Then there is a sequence $\{z_n\}_1^\infty$ of zeros of $\re f$ such that $\re z_n \in [-\frac{1}{\epsilon_{1}},-a^{2}-\epsilon_{2}]$ and $\im z_n > 0$ for each $n$, and such that $\im z_1 > \im z_2 > \cdots$ and $\lim_{n\to +\infty}\im z_{n}=0$. For each $n$, there exists by the mean value theorem a point $\xi_n$ with the same real part as $z_n$ such that $0 < \im \xi_n < \im z_n$ and $\partial_y \re f(x+iy) = 0$ for $x+iy = \xi_n$. Moreover, since the interval $[-\frac{1}{\epsilon_{1}},-a^{2}-\epsilon_{2}]$ is compact, there is a subsequence $z_{n_k}$ such that $z_{n_k} \to x_*$ as $k \to +\infty$ where $x_* \in [-\frac{1}{\epsilon_{1}},-a^{2}-\epsilon_{2}]$. Since the restriction of $f$ to $[-\frac{1}{\epsilon_{1}},-a^{2}-\epsilon_{2}] \times [0,1]$ is $C^1$ and $\xi_{n_k} \to x_*$, we infer that $\partial_y \re f(x_*+iy)|_{y=0^+} = 0$, which is a contradiction.
This proves \eqref{re f} with $\epsilon = \min\{\epsilon_{1},\epsilon_{2},\epsilon_{3}\}$.
\end{proof}

The first transformation is defined by
\begin{equation}\label{def of T s1 neq 0}
T(z)= \begin{pmatrix}
1 & 0 \\
- \frac{i a^{2}}{2}\sqrt{r} & 1
\end{pmatrix} r^{\frac{\sigma_{3}}{4}}\Phi(rz;r\vec{x},\vec{s},\alpha)e^{-\sqrt{r}g(z)\sigma_{3}}.
\end{equation}
Using Lemma \ref{lemma: g function} together with the properties of $\Phi$ listed in Subsection \ref{subsection: model RH problem Phi}, it can be verified that $T$ satisfies the following RH problem.
\subsubsection*{RH problem for $T$}
\begin{itemize}
\item[(a)] $T : \mathbb{C}\setminus \Sigma_{\Phi} \to \mathbb{C}^{2\times 2}$ is analytic.
\item[(b)] The jumps for $T$ are given by
\begin{align}
& T_{+}(z) = T_{-}(z) \begin{pmatrix}
1 & 0 \\ e^{-2\sqrt{r}f(z)} & 1
\end{pmatrix}, & & z \in -x_{m}+ (0,e^{\pm \frac{2\pi i}{3}}\infty), \label{jumps for T 1} \\
& T_{+}(z) = T_{-}(z) \begin{pmatrix}
0 & 1 \\ -1 & 0
\end{pmatrix}, & & z \in (-\infty,-x_{m}), \\
& T_{+}(z) = T_{-}(z) \begin{pmatrix}
e^{-\sqrt{r}(f_{+}(z)-f_{-}(z))} & s_{j} \\ 0 & e^{\sqrt{r}(f_{+}(z)-f_{-}(z))}
\end{pmatrix}, & & z \in \mathcal{B}_{j}, \; j \in \{n,\ldots,m\}, \label{jumps for T to be factorized} \\
& T_{+}(z) = T_{-}(z) \begin{pmatrix}
1 & s_{j}e^{2\sqrt{r}f(z)} \\ 0 & 1
\end{pmatrix}, & & z \in \mathcal{C}_{j}, \; j \in \{1,\ldots,n\}, \label{jumps for T 4}
\end{align}
where
\begin{align*}
\mathcal{B}_{j}:= \begin{cases} 
(-x_{j},-x_{j-1}), & \mbox{if } j > n, \\
(-x_{n},-a^{2}), & \mbox{if } j=n,
\end{cases} \qquad \mathcal{C}_{j}:= \begin{cases} 
(-x_{j},-x_{j-1}), & \mbox{if } j <n, \\
(-a^{2},-x_{n-1}), & \mbox{if } j=n,
\end{cases}
\end{align*}
with $x_{0}:=0$ and $x_{m+1}=+\infty$.
\item[(c)] As $z \to \infty$, we have 
\begin{equation}
\label{eq:Tasympinf s1 neq 0}
T(z) = \left( I + \frac{T_{1}}{z} + \bigO\left(z^{-2}\right) \right) z^{-\frac{\sigma_3}{4}} N, \qquad (T_{1})_{1,2} = - \frac{i a^{2} \sqrt{r}}{2} + \frac{\Phi_{1,12}(r \vec{x},\vec{s},a\sqrt{r})}{\sqrt{r}}
\end{equation}
where the principal branch is chosen for the root, and the matrix $T_{1} = T_{1}(r \vec{x},\vec{s},a\sqrt{r})$ is independent of $z$ and traceless.

As $z \to -x_{j}$, $j \in \{1,\dots,m\}$, we have
\begin{equation}\label{behavior of T near -xj}
T(z) = \begin{pmatrix}
\bigO(1) & \bigO(\log (z+x_{j})) \\ \bigO(1) & \bigO(\log (z+x_{j}))
\end{pmatrix}.
\end{equation}

As $z \to -a^{2}$, we have $T(z) = \bigO(1)$.

As $z \to 0$, $T(z) = \bigO(1)$. More precisely, we have
\begin{align*}
T(z) & = \begin{pmatrix}
1 & 0 \\
- \frac{i a^{2}}{2}\sqrt{r} & 1
\end{pmatrix} r^{\frac{\sigma_{3}}{4}} G_{0}(rz;r\vec{x},\vec{s},\alpha)(rz)^{\frac{a \sqrt{r}}{2}\sigma_{3}} \begin{pmatrix}
1 & s_{1}h(rz) \\ 0 & 1
\end{pmatrix} z^{-\frac{a\sqrt{r}}{2}\sigma_{3}}e^{-\sqrt{r} g_{0} \sigma_{3}}(I+\bigO(z)).
\end{align*}
\end{itemize}
Using that $f_{+}(x)+f_{-}(x)=0$ for all $x \in (-\infty,-a^{2})$, the jumps in \eqref{jumps for T to be factorized} can be factorized as follows
\begin{multline}\label{factorization of the jump}
\begin{pmatrix}
e^{-2\sqrt{r}f_{+}(z)} & s_{j} \\ 0 & e^{-2\sqrt{r}f_{-}(z)}
\end{pmatrix} = \begin{pmatrix}
1 & 0 \\
s_{j}^{-1}e^{-2\sqrt{r}f_{-}(z)} & 1
\end{pmatrix} \begin{pmatrix}
0 & s_{j} \\ -s_{j}^{-1} & 0
\end{pmatrix} \begin{pmatrix}
1 & 0 \\ 
s_{j}^{-1}e^{-2\sqrt{r}f_{+}(z)} & 1
\end{pmatrix}.
\end{multline}
\subsection{Second transformation: $T \mapsto S$}\label{subsection: S with s1 neq 0}
Here we proceed with the opening of the lenses, which is a standard step of the steepest descent method \cite{Deift}. Let $\Omega_{j,+}$ and $\Omega_{j,-}$ be open regions located above and below $\mathcal{B}_{j}$, respectively. We let $\gamma_{j,+}$ and $\gamma_{j,-}$ denote the parts of $\partial\Omega_{j,+} \cup \partial\Omega_{j,-}$ lying strictly in the upper and lower half-plane, respectively. The contours $\gamma_{j,+}, \gamma_{j,-}$, $j=n,\ldots,m$, are represented in Figure \ref{fig:contour for S s1 neq 0} in a situation where $m=3$ and $n=2$. The second transformation is defined by 
\begin{equation}\label{def of S s1 neq 0}
S(z) = T(z) \left\{ \begin{array}{l l}
\begin{pmatrix}
1 & 0 \\
-s_{j}^{-1}e^{-2\sqrt{r}f(z)} & 1
\end{pmatrix}, & \mbox{if } z \in \Omega_{j,+}, \, j \in \{n,\ldots,m\} \\
\begin{pmatrix}
1 & 0 \\
s_{j}^{-1}e^{-2\sqrt{r}f(z)} & 1
\end{pmatrix}, & \mbox{if } z \in \Omega_{j,-}, \, j \in \{n,\ldots,m\} \\
I, & \mbox{if } z \in \mathbb{C}\setminus  \cup_{j=n}^{m}(\Omega_{j,+}\cup \Omega_{j,-}).
\end{array} \right.
\end{equation}
Since $T$ is analytic in $\mathbb{C}\setminus \Sigma_{\Phi}$, we conclude that $S$ is analytic in $\C \backslash \Gamma_{S}$, where 
\begin{align*}
\Gamma_{S}:=(-\infty,0)\cup \gamma_{+}\cup \gamma_{-} \qquad \mbox{with} \qquad \gamma_{\pm} := \bigcup_{j=n}^{m+1} \gamma_{j,\pm}, \quad \gamma_{m+1,\pm} := -x_{m} + (0,e^{\pm \frac{2\pi i}{3}}\infty).
\end{align*}
The contour $\Gamma_{S}$ is oriented as shown in Figure \ref{fig:contour for S s1 neq 0}. 
Using the factorization \eqref{factorization of the jump} and the jumps \eqref{jumps for T 1}--\eqref{jumps for T 4}, we verify that $S$ satisfies the following jumps:
\begin{align*}
& S_{+}(z) = S_{-}(z)\begin{pmatrix}
0 & s_{j} \\ -s_{j}^{-1} & 0
\end{pmatrix}, & & z \in \mathcal{B}_{j}, \; j \in \{n,\ldots,m+1\} \\
& S_{+}(z) = S_{-}(z)\begin{pmatrix}
1 & 0 \\
s_{j}^{-1} e^{-2\sqrt{r}f(z)} & 1
\end{pmatrix}, & & z \in \gamma_{j,\pm}, \; j \in \{n,\ldots,m+1\}, \\
& S_{+}(z) = S_{-}(z) \begin{pmatrix}
1 & s_{j}e^{2\sqrt{r}f(z)} \\ 0 & 1
\end{pmatrix}, & & z \in \mathcal{C}_{j}, \; j \in \{1,\ldots,n\},
\end{align*}
where $x_{0} = 0$, $x_{m+1} = +\infty$ and $s_{m+1} := 1$.

\medskip Deforming the contour if necessary, we can (and do) assume without loss of generality that $\gamma_{\pm} \subset \mathcal{V}$, where $\mathcal{V}$ is as described in the statement of Lemma \ref{lemma: g function}. We infer from \eqref{re f}--\eqref{re f 2} that the jumps $S_{-}(z)^{-1}S_{+}(z)$ are exponentially close to the identity matrix as $r \to + \infty$ for $z \in \gamma_{+}\cup \gamma_{-} \cup (-a^{2},0)$. This convergence is uniform for $z$ bounded away from $(-\infty,-a^{2})$, but only pointwise for $z$ close to $(-\infty,-a^{2})$.

\begin{figure}
\centering
\begin{tikzpicture}
\draw[fill] (0,0) circle (0.05);
\draw (0,0) -- (8,0);
\draw (0,0) -- (120:3);
\draw (0,0) -- (-120:3);
\draw (0,0) -- (-3,0);

\draw (0,0) .. controls (1,1.3) and (2,1.3) .. (3,0);
\draw (0,0) .. controls (1,-1.3) and (2,-1.3) .. (3,0);
\draw (3,0) .. controls (3.5,1) and (4.5,1) .. (5,0);
\draw (3,0) .. controls (3.5,-1) and (4.5,-1) .. (5,0);

\draw[fill] (3,0) circle (0.05);
\draw[fill] (5,0) circle (0.05);
\draw[fill] (6.5,0) circle (0.05);
\draw[fill] (8,0) circle (0.05);

\node at (0.15,-0.3) {$-x_{m}$};
\node at (3,-0.3) {$-x_{2}$};
\node at (5,-0.3) {$-a^{2}$};
\node at (6.5,-0.3) {$-x_{1}$};
\node at (8.4,-0.3) {$0=x_{0}$};
\node at (-3,-0.3) {$-\infty=-x_{m+1}$};

\node at (1.5,1.2) {$\gamma_{m,+}$};
\node at (1.5,-1.25) {$\gamma_{m,-}$};
\node at (4,1) {$\gamma_{2,+}$};
\node at (4,-1.05) {$\gamma_{2,-}$};

\node at (1.6,0.5) {\small $\Omega_{m,+}$};
\node at (1.6,-0.55) {\small $\Omega_{m,-}$};
\node at (4.1,0.35) {\small $\Omega_{2,+}$};
\node at (4.1,-0.4) {\small $\Omega_{2,-}$};

\draw[black,arrows={-Triangle[length=0.18cm,width=0.12cm]}]
(-120:1.5) --  ++(60:0.001);
\draw[black,arrows={-Triangle[length=0.18cm,width=0.12cm]}]
(120:1.3) --  ++(-60:0.001);
\draw[black,arrows={-Triangle[length=0.18cm,width=0.12cm]}]
(180:1.5) --  ++(0:0.001);

\draw[black,arrows={-Triangle[length=0.18cm,width=0.12cm]}]
(0:1.5) --  ++(0:0.001);
\draw[black,arrows={-Triangle[length=0.18cm,width=0.12cm]}]
(0:4.1) --  ++(0:0.001);
\draw[black,arrows={-Triangle[length=0.18cm,width=0.12cm]}]
(0:5.8) --  ++(0:0.001);
\draw[black,arrows={-Triangle[length=0.18cm,width=0.12cm]}]
(0:7.3) --  ++(0:0.001);

\draw[black,arrows={-Triangle[length=0.18cm,width=0.12cm]}]
(1.55,0.97) --  ++(0:0.001);
\draw[black,arrows={-Triangle[length=0.18cm,width=0.12cm]}]
(1.55,-0.97) --  ++(0:0.001);

\draw[black,arrows={-Triangle[length=0.18cm,width=0.12cm]}]
(4.05,0.76) --  ++(0:0.001);
\draw[black,arrows={-Triangle[length=0.18cm,width=0.12cm]}]
(4.05,-0.76) --  ++(0:0.001);

\end{tikzpicture}
\caption{The contour $\Gamma_{S}$ with $m=3$ and $n = 2$.}
\label{fig:contour for S s1 neq 0}
\end{figure}
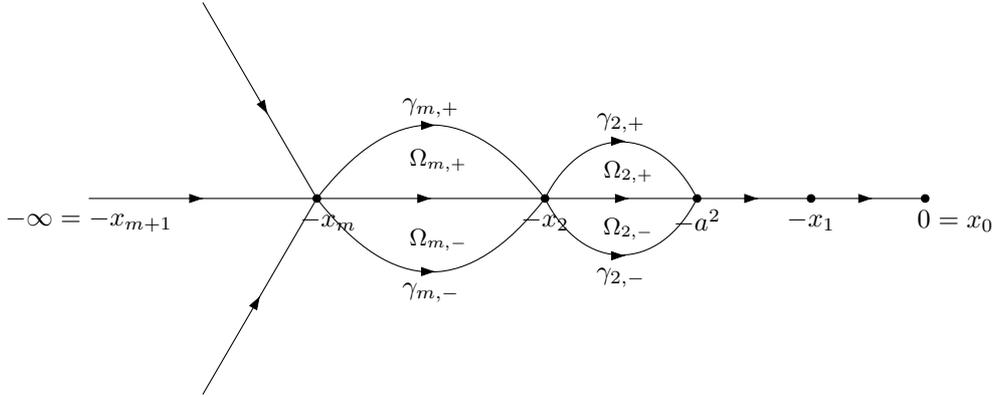
\subsection{Global parametrix}\label{subsection: Global param s1 neq 0}
The global parametrix is denoted $P^{(\infty)}$ and is defined as the solution to a RH problem whose jumps are obtained by ignoring the (pointwise) exponentially small jumps of $S$. We will show in Subsection \ref{subsection Small norm s1 neq 0} that $P^{(\infty)}$ is a good approximation to $S$ outside small neighborhoods of $-a^{2},-x_{1}$, $\ldots,-x_{m}$. 
\subsubsection*{RH problem for $P^{(\infty)}$}
\begin{enumerate}[label={(\alph*)}]
\item[(a)] $P^{(\infty)} : \C \backslash (-\infty,-a^{2}] \rightarrow \C^{2\times 2}$ is analytic.
\item[(b)] The jumps for $P^{(\infty)}$ are given by
\begin{align*}
& P^{(\infty)}_{+}(z) = P^{(\infty)}_{-}(z)\begin{pmatrix}
0 & s_{j} \\ -s_{j}^{-1} & 0
\end{pmatrix}, & & z \in \mathcal{B}_{j}, \, j \in \{n,\dots,m+1\}.
\end{align*}
\item[(c)] As $z \rightarrow \infty$, we have
\begin{equation}
\label{eq:Pinf asympinf s1 neq 0}
P^{(\infty)}(z) = \left( I + \frac{P^{(\infty)}_{1}}{z} + \bigO\left( z^{-2} \right) \right) z^{-\frac{\sigma_3}{4}} N,
\end{equation}
where $P_{1}^{(\infty)}$ is a matrix independent of $z$.

\item[(d)] As $z \to -x_{j}$, $j \in \{n,\dots,m\}$, we have $P^{(\infty)}(z) = \bigO(1)$.

As $z \to -a^{2}$, we have $P^{(\infty)}(z) = \bigO((z+a^{2})^{-1/4})$.
\end{enumerate}
A slightly more complicated version of this RH problem has actually been solved in \cite{CharlierBessel}. The solution $P^{(\infty)}$ is obtained by setting the parameters $\alpha$ and $-x_{1}$ of \cite[Section 5.3]{CharlierBessel} to $0$ and $-a^{2}$, respectively. The construction is as follows. Define
\begin{equation}\label{def of beta_j s1 neq 0}
\beta_{j} := \frac{1}{2\pi i}\log \frac{s_{j+1}}{s_{j}}, \qquad j = 1,\dots,m, \qquad \mbox{ with } s_{m+1} := 1,
\end{equation}
and for $\ell \in \{0,1,2,\dots\}$, define
\begin{equation}\label{d_ell in terms of beta_j s1 neq 0}
d_{\ell} = \frac{2i(-1)^{\ell}}{2\ell-1}\sum_{j=n}^{m} \beta_{j} (x_{j}-a^{2})^{\ell-\frac{1}{2}}.
\end{equation}
The unique solution to the RH problem for $P^{(\infty)}$ is given by
\begin{equation}\label{def of Pinf s1 neq 0}
P^{(\infty)}(z) = \begin{pmatrix}
1 & 0 \\ id_{1} & 1
\end{pmatrix}(z+a^{2})^{-\frac{\sigma_{3}}{4}}ND(z)^{-\sigma_{3}},
\end{equation}
where
\begin{align*}
D(z) = \exp \bigg(- \sqrt{z+a^{2}}\sum_{j=n}^{m} i \beta_{j} \int_{a^{2}}^{x_{j}}\frac{du}{\sqrt{u-a^{2}}(z+u)} \bigg).
\end{align*}
Furthermore, $P^{(\infty)}$ satisfies
\begin{equation}\label{Pinf 1 12 s1 neq 0}
P_{1,12}^{(\infty)} = i d_{1},
\end{equation}
and, for any $p \in \mathbb{N}_{>0}= \{1,2,\ldots\}$ and any $j \in \{n,\dots,m\}$, we have
\begin{align}
& D(z) =  \exp \left( \sum_{\ell = 1}^{p} \frac{d_{\ell}}{(z+a^{2})^{\ell-\frac{1}{2}}} + \bigO(z^{-p-\frac{1}{2}}) \right), & & \mbox{as } z \to \infty, \nonumber \\
& D(z) = \sqrt{s_{j}} (4(x_{j}-a^{2}))^{-\beta_{j}} \bigg(\prod_{\substack{k=n \\ k \neq j}}^{m} T_{k,j}^{-\beta_{k}}\bigg) (z+x_{j})^{\beta_{j}}(1+\bigO(z+x_{j})), & & \mbox{as } z \to -x_{j}, \, \im z > 0, \label{asymp of D near -xj} \\
& D(z) = \sqrt{s_{n}}\Big(1-d_{0}\sqrt{z+a^{2}}+\bigO(z+a^{2})\Big), & & \mbox{as } z \to -a^{2}, \label{asymp of D near -a2} \\
& \log D(0) = - a \sum_{j=n}^{m}i\beta_{j} \mathcal{I}_{j}, \label{explicit value of D0}
\end{align}
where 
\begin{align}\label{T and T tilde s1 neq 0}
& T_{k,j} = \frac{\sqrt{x_{j}-a^{2}}+\sqrt{x_{k}-a^{2}}}{|\sqrt{x_{j}-a^{2}}-\sqrt{x_{k}-a^{2}}|}, & & \mathcal{I}_{j} = \int_{a^{2}}^{x_{j}}\frac{du}{u\sqrt{u-a^{2}}}.
\end{align}

\subsection{Local parametrices}
We consider small open disks $\mathcal{D}_{p}$ centered at $p \in \{-a^{2},-x_{1},\ldots,-x_{m}\}$. The radii of the disks $\mathcal{D}_{-x_{j}}$, $j=1,\ldots,m$ are all chosen to be equal to $\delta>0$, where 
\begin{align}\label{def of delta regime 1}
\delta = \frac{1}{M}\min_{0\leq j <k \leq m} \{x_{k}-x_{j}, x_{n}-a^{2},a^{2}-x_{n-1} \}, \qquad M \geq 3, \qquad M \mbox{ fixed},
\end{align}
where we recall that $x_{0}:=0$. The condition $M \geq 3$ ensures that the disks do not intersect each other, and that $0 \notin \mathcal{D}_{-x_{1}} \cup \mathcal{D}_{-a^{2}}$. Since $(x_{1},\ldots,x_{n-1},a^{2},x_{n},\ldots,x_{m})$ lies in a compact subset of $\mathbb{R}_{\mathrm{ord}}^{+,m+1}$, $\delta$ remains bounded away from $0$. In this regime, there is no need to consider a disk around the origin.

\medskip In each of the disks, we will build a so-called local parametrix. We will show in Subsection \ref{subsection Small norm s1 neq 0} that the local parametrices are good approximations to $S$ inside the disks.

\medskip The local parametrix $P^{(p)}$ is defined in $\mathcal{D}_{p} \cup \partial \mathcal{D}_{p}$ as the solution to a RH problem whose jumps are identical to those of $S$. On the boundary of the disk, we require $P^{(p)}$ to ``match" with $P^{(\infty)}$, in the sense that
\begin{equation}\label{matching weak s1 neq 0}
P^{(p)}(z) = (I+o(1))P^{(\infty)}(z), \qquad \mbox{ as } r \to +\infty,
\end{equation}
uniformly for $z \in \partial \mathcal{D}_{p}$. We need to distinguish three different types of local parametrices:
\begin{itemize}
\item \vspace{-0.1cm} in $\mathcal{D}_{-x_{j}}$, $j=n,\ldots,m$, $P^{(-x_{j})}$ is built in terms of hypergeometric functions,
\item \vspace{-0.1cm} in $\mathcal{D}_{-x_{j}}$, $j=1,\ldots,n-1$, $P^{(-x_{j})}$ can be solved explicitly using elementary functions,
\item \vspace{-0.1cm} in $\mathcal{D}_{-a^{2}}$, $P^{(-a^{2})}$ is built in terms of Airy functions.
\end{itemize}
We will construct the local parametrices with the help of three model RH problems that have already been studied in the literature and which we recall in the appendix (Section \ref{Section:Appendix}). 
\subsubsection{Local parametrices around $-x_{j}$, $j = n,\dots,m$}\label{subsection: local param HG s1 neq 0}
The contruction of $P^{(-x_{j})}$ for $j \in \{n,\dots,m\}$ relies on a model RH problem whose solution $\Phi_{\mathrm{HG}}$ is built in terms of confluent hypergeometric functions. This model RH problem has been studied in \cite{ItsKrasovsky, FoulquieMartinezSousa} and we recall the properties of $\Phi_{\mathrm{HG}}$ in Section \ref{subsection: model RHP with HG functions} for the convenience of the reader. The function
\begin{equation}\label{conformal map for FH}
f_{-x_{j}}(z) := -2 \left\{ \begin{array}{l l}
f(z)-f_{+}(-x_{j}), & \mbox{if } \im z > 0 \\
-(f(z)-f_{-}(-x_{j})), & \mbox{if } \im z < 0
\end{array} \right. = -2i \int_{-x_{j}}^{z} \frac{\sqrt{-s-a^{2}}}{2s}ds
\end{equation}
has the following expansion as $z \to -x_{j}$
\begin{equation}\label{expansion conformal map s1 neq 0}
f_{-x_{j}}(z) = i c_{-x_{j}} (z+x_{j})\big(1+\bigO(z+x_{j})\big), \quad \mbox{ with } \quad c_{-x_{j}} = \frac{\sqrt{x_{j}-a^{2}}}{x_{j}} > 0.
\end{equation}
This shows that $f_{-x_{j}}$ is a conformal map in $\mathcal{D}_{-x_{j}}$, provided that $M$ in \eqref{def of delta regime 1} is chosen sufficiently large. In order to use the model RH problem for $\Phi_{\mathrm{HG}}$, we need $f_{-x_{j}}$ to map the contour $\Gamma_{S}\cap \mathcal{D}_{-x_{j}}$ to a subset of the contour $\Sigma_{\mathrm{HG}}$, where $\Sigma_{\mathrm{HG}}$ is shown in Figure \ref{Fig:HG}. Note that the function $f_{-x_{j}}$ automatically satisfies $f_{-x_{j}}(\mathbb{R}\cap \mathcal{D}_{-x_{j}})\subset i \mathbb{R}$. In Subsection \ref{subsection: S with s1 neq 0}, we had some freedom in the choice of the lenses $\gamma_{\pm}$. Now, we use this freedom to ensure that the lenses in a neighborhood of $-x_{j}$ are such that
\begin{equation}\label{deformation of the lenses local param xj s1 neq 0}
f_{-x_{j}}((\gamma_{j+1,+}\cup \gamma_{j,+})\cap \mathcal{D}_{-x_{j}}) \subset \Gamma_{3} \cup \Gamma_{2}, \qquad f_{-x_{j}}((\gamma_{j+1,-}\cup \gamma_{j,-})\cap \mathcal{D}_{-x_{j}}) \subset \Gamma_{5} \cup \Gamma_{6},
\end{equation}
where $\Gamma_{j}$, $j=2,3,5,6$ are the contours displayed in Figure \ref{Fig:HG}. Let us define
\begin{equation}\label{lol10}
P^{(-x_{j})}(z) = E_{-x_{j}}(z) \Phi_{\mathrm{HG}}(\sqrt{r}f_{-x_{j}}(z);\beta_{j})(s_{j}s_{j+1})^{-\frac{\sigma_{3}}{4}}e^{-\sqrt{r}f(z)\sigma_{3}},
\end{equation}
where $E_{-x_{j}}$ is analytic in $\mathcal{D}_{-x_{j}}$ and given by
\begin{align}\label{def of Ej s1 neq 0}
E_{-x_{j}}(z) = P^{(\infty)}(z) (s_{j} s_{j+1})^{\frac{\sigma_{3}}{4}} \left\{ \begin{array}{l l}
\ds \sqrt{\frac{s_{j}}{s_{j+1}}}^{\sigma_{3}}, & \im z > 0 \\
\begin{pmatrix}
0 & 1 \\ -1 & 0
\end{pmatrix}, & \im z < 0
\end{array} \right\} e^{\sqrt{r}f_{+}(-x_{j})\sigma_{3}}(\sqrt{r}f_{-x_{j}}(z))^{\beta_{j}\sigma_{3}}.
\end{align}
One can verify from the RH problem for $\Phi_{\mathrm{HG}}$ that $S(z)P^{(-x_{j})}(z)^{-1}$ has no jumps in $\mathcal{D}_{-x_{j}}$ and has a removable singularity at $z=-x_{j}$. Furthermore, using \eqref{Asymptotics HG}, we obtain
\begin{equation}\label{matching strong -x_j s1 neq 0}
P^{(-x_{j})}(z)P^{(\infty)}(z)^{-1} = I + \frac{1}{\sqrt{r}f_{-x_{j}}(z)}E_{-x_{j}}(z) \Phi_{\mathrm{HG},1}(\beta_{j})E_{-x_{j}}(z)^{-1} + \bigO\big(r^{-1}\big), 
\end{equation}
as $r \to + \infty$, uniformly for $z \in \partial \mathcal{D}_{-x_{j}}$. In particular $P^{(-x_{j})}$ satisfies \eqref{matching weak s1 neq 0}. Finally, using \eqref{def of Pinf s1 neq 0}, \eqref{asymp of D near -xj} and \eqref{expansion conformal map s1 neq 0}, we get
\begin{equation}\label{E_j at -x_j s1 neq 0}
E_{-x_{j}}(-x_{j}) = \begin{pmatrix}
1 & 0 \\ id_{1} & 1
\end{pmatrix} e^{-\frac{\pi i}{4}\sigma_{3}} (x_{j}-a^{2})^{-\frac{\sigma_{3}}{4}}N\Lambda_{j}^{\sigma_{3}},
\end{equation}
where
\begin{equation}\label{def Lambda_j s1 neq 0}
\Lambda_{j} = (4(x_{j}-a^{2}))^{\beta_{j}} \bigg( \prod_{\substack{k=n \\ k \neq j}}^{m} T_{k,j}^{\beta_{k}} \bigg)e^{\sqrt{r}f_{+}(-x_{j})}r^{\frac{\beta_{j}}{2}}c_{-x_{j}}^{\beta_{j}}.
\end{equation}
The quantities \eqref{E_j at -x_j s1 neq 0} and \eqref{def Lambda_j s1 neq 0} will be useful in Section \ref{Section: integration s1 >0}.
\subsubsection{Local parametrices around $-x_{j}$, $j = 1,\dots,n-1$}\label{subsection: local param explicit}
We note from \eqref{behavior of T near -xj} and \eqref{def of S s1 neq 0} that $S$ has a logarithmic singularity at $-x_{j}$, and from \eqref{def of Pinf s1 neq 0} that $P^{(\infty)}(z)$ remains bounded as $z \to -x_{j}$, $j = 1,\dots,n-1$. Therefore, even though the jumps for $S$ are exponentially small as $r \to + \infty$ uniformly for $z \in \mathcal{D}_{-x_{j}}\cap \mathbb{R}\setminus \{-x_{j}\}$, $P^{(\infty)}$ cannot be a good approximation to $S$ in $\mathcal{D}_{-x_{j}}$, and we need to construct a local parametrix. We define
\begin{align}\label{def of trivial local param -xj}
P^{(-x_{j})}(z) = P^{(\infty)}(z) \begin{pmatrix}
1 & h_{-x_{j}}(z) \\
0 & 1
\end{pmatrix},
\end{align}
where
\begin{align*}
& h_{-x_{j}}(z) = \frac{s_{j+1}}{2\pi i}\int_{-x_{j}-2\delta}^{-x_{j}} \frac{e^{2\sqrt{r}f(s)}}{s-z}ds + \frac{s_{j}}{2\pi i}\int_{-x_{j}}^{-x_{j}+2\delta} \frac{e^{2\sqrt{r}f(s)}}{s-z}ds.
\end{align*}
We easily check from the definition of $h_{-x_{j}}$ that $P^{(-x_{j})}$ has the same jumps as $S$ inside $\mathcal{D}_{-x_{j}}$. Furthermore, we infer from \eqref{def of g s1 neq 0} and \eqref{re f 2} that
\begin{align}\label{matching of trivial local param}
P^{(-x_{j})}(z)P^{(\infty)}(z)^{-1} = I + \bigO(e^{-c \sqrt{r}}), \qquad \mbox{as } r \to + \infty,
\end{align}
uniformly for $z \in \partial \mathcal{D}_{-x_{j}}$, for a certain $c>0$. In particular, $P^{(-x_{j})}$ satisfies \eqref{matching weak s1 neq 0}. Finally, using the expansion
\begin{align}\label{asymp of h -xj}
h_{-x_{j}}(z) = \frac{s_{j+1}-s_{j}}{2\pi i}e^{2\sqrt{r}f(-x_{j})}\log(z+x_{j})+\bigO(1), \qquad \mbox{as } z \to -x_{j},
\end{align}
one verifies that $S(z)P^{(-x_{j})}(z)^{-1}$ has a removable singularity at $z=-x_{j}$.
\subsubsection{Local parametrix around $-a^{2}$}\label{subsection: local param bessel s1 neq 0}
Recall that in this section $a$ remains bounded away from $0$ as $r \to + \infty$. The construction of $P^{(-a^{2})}$ is standard and relies on the model RH problem from \cite{DKMVZ1}, whose solution is denoted $\Phi_{\mathrm{Ai}}$. For the reader's convenience, we recall the properties of $\Phi_{\mathrm{Ai}}$ in Section \ref{subsec:Airy}. The function
\begin{equation}\label{conformal map near 0 s1 neq 0}
f_{-a^{2}}(z) = \bigg( -\frac{3}{2}f(z) \bigg)^{2/3} = \bigg( -\frac{3}{2}\int_{-a^{2}}^{z} \frac{\sqrt{s+a^{2}}}{2s}ds \bigg)^{2/3}
\end{equation}
has the following expansion
\begin{align}\label{expansion of fma2 at ma2}
f_{-a^{2}}(z) = \frac{z+a^{2}}{(2a^{2})^{2/3}} \bigg( 1+\frac{2}{5a^{2}}(z+a^{2}) + \bigO\big((z+a^{2})^{2}\big) \bigg), \qquad \mbox{as } z \to -a^{2}.
\end{align}
From \eqref{expansion of fma2 at ma2}, we infer that the map $f_{-a^{2}}$ is conformal in $\mathcal{D}_{-a^{2}}$, provided that $M$ in \eqref{def of delta regime 1} is chosen sufficiently large. 
In view of the jump contour for $\Phi_{\mathrm{Ai}}$ displayed in Figure \ref{figAiry} (left), we deform the lenses in a neighborhood of $-a^{2}$ such that
\begin{equation}\label{deformation of the lenses local param 0 s1 neq 0}
f_{-a^{2}}(\gamma_{n,+}) \subset e^{\frac{2\pi i}{3}}\mathbb{R}^{+}, \qquad f_{-a^{2}}(\gamma_{n,-}) \subset e^{-\frac{2\pi i}{3}}\mathbb{R}^{+}.
\end{equation} 
We define
\begin{equation}\label{def of P^-x1 s1 neq 0}
P^{(-a^{2})}(z) = E_{-a^{2}}(z)\Phi_{\mathrm{Ai}}(r^{\frac{1}{3}}f_{-a^{2}}(z))s_{n}^{-\frac{\sigma_{3}}{2}}e^{-\sqrt{r}f(z)\sigma_{3}},
\end{equation}
where $E_{-a^{2}}$ is analytic in $\mathcal{D}_{-a^{2}}$ and given by
\begin{equation}
E_{-a^{2}}(z) = P^{(\infty)}(z)s_{n}^{\frac{\sigma_{3}}{2}}N^{-1} f_{-a^{2}}(z)^{\frac{\sigma_{3}}{4}} r^{\frac{\sigma_{3}}{12}}.
\end{equation}
From \eqref{deformation of the lenses local param 0 s1 neq 0}, \eqref{def of P^-x1 s1 neq 0} and the RH problem for $\Phi_{\mathrm{Ai}}$, we verify that $S(z)P^{(-a^2)}(z)^{-1}$ has no jumps in $\mathcal{D}_{-a^{2}}$ and remains bounded as $z \to -a^{2}$. Also, using \eqref{Asymptotics Airy}, we obtain
\begin{equation}\label{matching strong 0 s1 neq 0}
P^{(-a^{2})}(z)P^{(\infty)}(z)^{-1} = I + \frac{1}{\sqrt{r}f_{-a^{2}}(z)^{3/2}}P^{(\infty)}(z)s_{n}^{\frac{\sigma_{3}}{2}}\Phi_{\mathrm{Ai},1}s_{n}^{-\frac{\sigma_{3}}{2}}P^{(\infty)}(z)^{-1} + \bigO(r^{-1}),
\end{equation}
as $r \to +\infty$ uniformly for $z \in \partial \mathcal{D}_{-a^{2}}$. In particular, $P^{(-a^{2})}$ satisfies \eqref{matching weak s1 neq 0}. Finally, after a long computation using \eqref{def of Pinf s1 neq 0}, \eqref{asymp of D near -a2} and \eqref{expansion of fma2 at ma2}, we obtain
\begin{equation}\label{E0 at 0 s1 neq 0}
E_{-a^{2}}(-a^{2}) = \begin{pmatrix}
1 & 0 \\ id_{1} & 1
\end{pmatrix} \begin{pmatrix}
1 & -id_{0} \\
0 & 1
	\end{pmatrix} (r^{\frac{1}{12}} 2^{-\frac{1}{6}}a^{-\frac{1}{3}})^{\sigma_{3}}.
\end{equation}
The above identity will be used in Section \ref{Section: integration s1 >0}.

\subsection{Small norm problem}\label{subsection Small norm s1 neq 0}

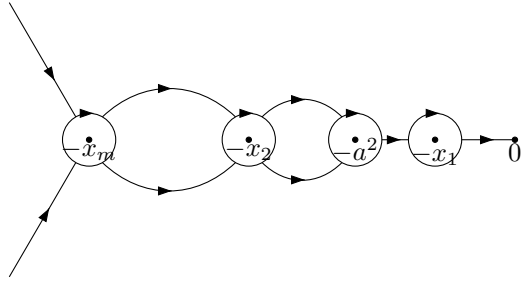
\begin{figure}
\centering
\begin{tikzpicture}[scale=0.7]
\draw[fill] (0,0) circle (0.05);
\draw (0,0) circle (0.5);

\draw (120:0.5) -- (120:3);
\draw (-120:0.5) -- (-120:3);

\draw ($(0,0)+(60:0.5)$) .. controls (1,1.15) and (2,1.15) .. ($(3,0)+(120:0.5)$);
\draw ($(0,0)+(-60:0.5)$) .. controls (1,-1.15) and (2,-1.15) .. ($(3,0)+(-120:0.5)$);
\draw ($(3,0)+(60:0.5)$) .. controls (3.7,0.88) and (4.3,0.88) .. ($(5,0)+(120:0.5)$);
\draw ($(3,0)+(-60:0.5)$) .. controls (3.7,-0.88) and (4.3,-0.88) .. ($(5,0)+(-120:0.5)$);
\draw (5.5,0)--(6,0);
\draw (7,0)--(8,0);

\draw[fill] (3,0) circle (0.05);
\draw (3,0) circle (0.5);
\draw[fill] (5,0) circle (0.05);
\draw (5,0) circle (0.5);
\draw[fill] (6.5,0) circle (0.05);
\draw (6.5,0) circle (0.5);
\draw[fill] (8,0) circle (0.05);

\node at (0.,-0.2) {$-x_{m}$};
\node at (3,-0.2) {$-x_{2}$};
\node at (5,-0.2) {$-a^{2}$};
\node at (6.5,-0.3) {$-x_{1}$};
\node at (8,-0.25) {$0$};

\draw[black,arrows={-Triangle[length=0.18cm,width=0.12cm]}]
(-120:1.5) --  ++(60:0.001);
\draw[black,arrows={-Triangle[length=0.18cm,width=0.12cm]}]
(120:1.3) --  ++(-60:0.001);
\draw[black,arrows={-Triangle[length=0.18cm,width=0.12cm]}]
($(0.08,0)+(90:0.5)$) --  ++(0:0.001);

\draw[black,arrows={-Triangle[length=0.18cm,width=0.12cm]}]
($(3.08,0)+(90:0.5)$) --  ++(0:0.001);
\draw[black,arrows={-Triangle[length=0.18cm,width=0.12cm]}]
($(5.08,0)+(90:0.5)$) --  ++(0:0.001);
\draw[black,arrows={-Triangle[length=0.18cm,width=0.12cm]}]
($(6.58,0)+(90:0.5)$) --  ++(0:0.001);

\draw[black,arrows={-Triangle[length=0.18cm,width=0.12cm]}]
(1.55,0.97) --  ++(0:0.001);
\draw[black,arrows={-Triangle[length=0.18cm,width=0.12cm]}]
(1.55,-0.97) --  ++(0:0.001);

\draw[black,arrows={-Triangle[length=0.18cm,width=0.12cm]}]
(4.05,0.76) --  ++(0:0.001);
\draw[black,arrows={-Triangle[length=0.18cm,width=0.12cm]}]
(4.05,-0.76) --  ++(0:0.001);

\draw[black,arrows={-Triangle[length=0.18cm,width=0.12cm]}]
(5.85,0) --  ++(0:0.001);
\draw[black,arrows={-Triangle[length=0.18cm,width=0.12cm]}]
(7.6,0) --  ++(0:0.001);

\end{tikzpicture} 
\caption{The contour $\Gamma_{R}$ with $m=3$ and $n=2$.}
\label{fig:contour for R s1 neq 0}
\end{figure}
In this section, $c$ and $C$ denote generic positive constants that may change within a computation. We will think of $c$ as being sufficiently small (but fixed), and $C$ as being sufficiently large (but fixed). Define
\begin{equation}\label{def of R s1 neq 0}
R(z) := \left\{ \begin{array}{l l}
S(z)P^{(\infty)}(z)^{-1}, & \mbox{for } z \in \mathbb{C}\setminus \big(\bigcup_{j=1}^{m}\mathcal{D}_{-x_{j}} \cup \mathcal{D}_{-a^2}\big), \\
S(z)P^{(-x_{j})}(z)^{-1}, & \mbox{for } z \in \mathcal{D}_{-x_{j}}, \, j \in \{1,\dots,m\}, \\
S(z)P^{(-a^{2})}(z)^{-1}, & \mbox{for } z \in \mathcal{D}_{-a^{2}}.
\end{array} \right.
\end{equation}
From the analysis of Subsections \ref{subsection: local param HG s1 neq 0}--\ref{subsection: local param bessel s1 neq 0}, $R$ has no jumps inside the $m+1$ disks, and the singularities of $R$ at $-x_{1},\ldots,-x_{m},-a^{2}$ are removable. Therefore, $R$ is analytic inside the $m+1$ disks and $R$ has jumps only on the contour 
\begin{align*}
\Gamma_{R} = \bigg( \partial \mathcal{D}_{-a^{2}} \cup \bigcup_{j=1}^{m} \partial \mathcal{D}_{-x_{j}} \cup \gamma_{+} \cup \gamma_{-} \cup (-a^{2},0) \bigg) \setminus \bigg(\mathcal{D}_{-a^{2}} \cup \bigcup_{j=1}^{m} \mathcal{D}_{-x_{j}} \bigg),
\end{align*}
see also Figure \ref{fig:contour for R s1 neq 0}, and for convenience we orient the boundaries of the disks in the clockwise direction. The jumps $J_{R}(z) := R_{-}(z)^{-1}R_{+}(z)$ are given by
\begin{align*}
J_{R}(z) = \begin{cases}
P^{(\infty)}(z)J_{S}(z)P^{(\infty)}(z)^{-1}, & \mbox{for } z \in (\gamma_{+} \cup  \gamma_{-} \cup (-a^{2},0)) \setminus \big(\mathcal{D}_{-a^{2}} \cup \bigcup_{j=1}^{m} \mathcal{D}_{-x_{j}} \big), \\
P^{(-x_{\star})}(z)P^{(\infty)}(z)^{-1}, & \mbox{for } z \in \partial \mathcal{D}_{-x_{\star}}, \; x_{\star} \in \{x_{1},\ldots,x_{m},a^{2}\}.
\end{cases}
\end{align*}
where $J_{S}(z) := S_{-}^{-1}(z)S_{+}(z)$. Using Lemma \ref{lemma: g function} and \eqref{def of Pinf s1 neq 0}, \eqref{matching strong -x_j s1 neq 0}, \eqref{matching of trivial local param}, \eqref{matching strong 0 s1 neq 0}, we infer that there exists $c>0$ such that, as $r \to + \infty$ we have
\begin{align}\label{asymp for JR with a bounded away from 0}
J_{R}(z)-I = \begin{cases}
\bigO(e^{-c \, \sqrt{r}\sqrt{z}}), & \mbox{unif. for } z \in (\gamma_{+}\cup \gamma_{-})\cap \Gamma_{R}, \\
\bigO(e^{-c \, \sqrt{r}}), & \mbox{unif. for } z \in \bigcup_{j=1}^{n-1} \partial \mathcal{D}_{-x_{j}} \cup (-a^{2},0) \setminus (\mathcal{D}_{-a^{2}} \cup \bigcup_{j=1}^{n-1} \mathcal{D}_{-x_{j}}), \\
\frac{J_{R}^{(1)}(z)}{\sqrt{r}}+\bigO(\tfrac{1}{r}), & \mbox{unif. for } z \in \partial \mathcal{D}_{-a^{2}} \cup \bigcup_{j=n}^{m} \partial \mathcal{D}_{-x_{j}},
\end{cases}
\end{align} \\[-0.2cm]
where $J_{R}^{(1)}$ is given by
\begin{align} \label{def of JRp1p}
J_{R}^{(1)}(z) := \begin{cases} 
\frac{1}{f_{-x_{j}}(z)}E_{-x_{j}}(z) \Phi_{\mathrm{HG},1}(\beta_{j})E_{-x_{j}}(z)^{-1}, & z \in \partial \mathcal{D}_{-x_{j}}, \, j=n,\ldots,m, \\
\frac{1}{f_{-a^{2}}(z)^{3/2}}P^{(\infty)}(z)s_{n}^{\frac{\sigma_{3}}{2}}\Phi_{\mathrm{Ai},1}s_{n}^{-\frac{\sigma_{3}}{2}}P^{(\infty)}(z)^{-1}, & z \in \partial\mathcal{D}_{-a^{2}}.
\end{cases}
\end{align}
In particular, for each $1 \leq p \leq \infty$ and any choice of $k_{1},\ldots,k_{m}\in \mathbb{N}_{\geq 0}$, there exist constants $c > 0$ and $C >0$ such that for all $r \geq 2$ we have
\begin{subequations}\label{estimates Lp norms}
\begin{align}
& \| \partial_{\beta}^{k}(J_{R}-I) \|_{L^{p}(\Gamma_{R}\setminus (\partial \mathcal{D}_{-a^{2}} \cup \bigcup_{l=n}^{m} \partial \mathcal{D}_{-x_{l}}))} \leq e^{-c \sqrt{r}}, \label{estimate Lp norm 1} \\
&\| \partial_{\beta}^{k}(J_{R}-I-\tfrac{J_{R}^{(1)}(z)}{\sqrt{r}}) \|_{L^{p}(\partial \mathcal{D}_{-a^{2}} \cup \bigcup_{l=n}^{m} \partial \mathcal{D}_{-x_{l}})} \leq C \, \frac{\log^{k} r}{r}, \label{estimate Lp norm 2}
	\\
&\| \partial_{\beta}^{k}(J_{R}-I) \|_{L^{p}(\Gamma_R)} \leq C \, \frac{\log^{k} r}{\sqrt{r}}, \label{estimate Lp norm 3}
\end{align}
\end{subequations}
where $k=k_{1}+\ldots+k_{m}$ and $\partial_{\beta}^{k}=\partial_{\beta_{1}}^{k_{1}}\ldots \partial_{\beta_{m}}^{k_{m}}$. The factors $\log^{k}r$ in \eqref{estimates Lp norms} is due to the factors $r^{\pm \beta_{j}}$ appearing in the entries of $J_{R}$, see \eqref{lol10}--\eqref{matching strong -x_j s1 neq 0}. It is also easy to see from \eqref{def of Pinf s1 neq 0}, \eqref{matching strong -x_j s1 neq 0}, \eqref{matching strong 0 s1 neq 0}, \eqref{def of trivial local param -xj} that the the estimates \eqref{asymp for JR with a bounded away from 0} and \eqref{estimates Lp norms} hold uniformly for $s_{1},\ldots,s_{m}$ in compact subsets of $(0,+\infty)$, and uniformly for $(x_{1},\ldots,x_{n-1},a^{2},x_{n},\ldots,x_{m})$ in compact subsets of $\mathbb{R}_{\mathrm{ord}}^{+,m+1}$. Finally, we also note that 
\begin{itemize}
\item \vspace{-0.1cm} since $S(z)$ and $P^{(\infty)}(z)$ are $\bigO(1)$ as $z \to 0$, $R(z)$ remains bounded for $z$ near the endpoint $0$,
\item \vspace{-0.1cm} \eqref{def of Pinf s1 neq 0}, \eqref{lol10}, \eqref{def of P^-x1 s1 neq 0} and \eqref{def of trivial local param -xj} imply that $R(z)$ remains bounded as $z$ approaches an intersection point of $\Gamma_{R}$,
\item \vspace{-0.1cm} \eqref{eq:Tasympinf s1 neq 0}, \eqref{eq:Pinf asympinf s1 neq 0} and \eqref{def of R s1 neq 0} imply that $R(z) = I+\bigO(z^{-1})$ as $z \to \infty$.
\end{itemize}
Therefore, $R$ satisfies a so-called small norm RH problem, and one can prove existence of $R$ for sufficiently large $r$ and compute its asymptotics following the method of Deift and Zhou \cite{DeiftZhou}. Let $\mathcal{C}:L^{2}(\Gamma_{R}) \to L^{2}(\Gamma_{R})$ be the operator defined by
\begin{align*}
\mathcal{C}f(z) = \frac{1}{2\pi i}\int_{\Gamma_{R}} \frac{f(s)}{s-z}dz, \qquad f \in L^{2}(\Gamma_{R}),
\end{align*}
and let $\mathcal{C}_{+}f$ and $\mathcal{C}_{-}f$ denote the left and right non-tangential limits of $\mathcal{C}f$. Since $J_{R}-I \in L^2(\Gamma_R) \cap L^\infty(\Gamma_R)$, we can define the Cauchy operator 
\begin{align*}
\mathcal{C}_{J_{R}} : L^{2}(\Gamma_{R}) + L^{\infty}(\Gamma_{R}) \to L^{2}(\Gamma_{R}), \qquad \mathcal{C}_{J_{R}}f=\mathcal{C}_{-}((J_{R}-I)f), \qquad f \in L^{2}(\Gamma_{R})+ L^{\infty}(\Gamma_{R}).
\end{align*}
Since 
\begin{align}\label{calCJRestimate}
  \|\mathcal{C}_{J_R} \|_{L^2(\Gamma_R) \to L^2(\Gamma_R)} \leq C \|J_R - I\|_{L^\infty(\Gamma_R)},
\end{align}
the estimate \eqref{estimate Lp norm 3} with $p=+\infty$ and $k_{1}=\dots=k_{m}=0$ implies that for all sufficiently large $r$ the operator $I-\mathcal{C}_{J_{R}}$ can be inverted as a Neumann series:
\begin{align}\label{neumann series}
(I-\mathcal{C}_{J_{R}})^{-1} = \sum_{\ell=0}^{+\infty}\mathcal{C}_{J_{R}}^{\ell}.
\end{align}
Hence, for all sufficiently large $r$, we have
\begin{align}\label{R in terms of muR}
R = I + \mathcal{C}(\mu_{R}(J_{R}-I)), \qquad \mbox{where} \qquad \mu_{R} := I + (I-\mathcal{C}_{J_{R}})^{-1}\mathcal{C}_{J_{R}}(I).
\end{align}
Furthermore, using \eqref{estimate Lp norm 3}, one deduces that, for $k_{1},\ldots,k_{m}\in \mathbb{N}_{\geq 0}$, 
\begin{align}\label{muminusIestimate}
\|\partial_{\beta}^{k}(\mu_{R}-I)\|_{L^{2}(\Gamma_{R})} = \|\partial_{\beta}^{k}((I-\mathcal{C}_{J_{R}})^{-1}\mathcal{C}_{J_{R}}(I))\|_{L^{2}(\Gamma_{R})}  \leq C \, \frac{\log^{k} r}{\sqrt{r}},
\end{align} 
where as before $k=k_{1}+\dots+k_{m}$ and $\partial_{\beta}^{k}=\partial_{\beta_{1}}^{k_{1}}\ldots \partial_{\beta_{m}}^{k_{m}}$. Using \eqref{estimates Lp norms}, \eqref{muminusIestimate}, and the representation
\begin{align*}
R = I + \mathcal{C}\Big(\tfrac{J_{R}^{(1)}(z)}{\sqrt{r}}\Big) + \mathcal{C}\Big(J_{R}-I-\tfrac{J_{R}^{(1)}(z)}{\sqrt{r}}\Big) + \mathcal{C}((\mu_{R}-I)(J_{R}-I)),
\end{align*}
we get
\begin{align}
& \partial_{\beta}^{k}R(z) = \partial_{\beta}^{k}\bigg(I + \frac{R^{(1)}(z)}{\sqrt{r}}\bigg) + \bigO\Big(\frac{\log^{k}r}{r}\Big), \qquad \partial_{\beta}^{k}R^{(1)}(z) = \bigO\big(\log^{k}r\big), & & \mbox{ as } r \to  +\infty, \label{eq: asymp R inf s1 neq 0} 
\end{align}
uniformly for $z \in \mathbb{C}\setminus \Gamma_{R}$, for $(x_{1},\ldots,x_{n-1},a^{2},x_{n},\ldots,x_{m})$ in compact subsets of $\mathbb{R}_{\mathrm{ord}}^{+,m+1}$ and for $\beta_{1},\ldots,\beta_{m}$ in compact subsets of $i \mathbb{R}$,  where
\begin{equation}\label{def of R1}
R^{(1)}(z) = \frac{1}{2\pi i}\int_{\partial \mathcal{D}_{-a^{2}}} \frac{J_{R}^{(1)}(s)}{s-z}ds + \sum_{j=n}^{m}\frac{1}{2\pi i}\int_{\partial\mathcal{D}_{-x_{j}}} \frac{J_{R}^{(1)}(s)}{s-z}ds.
\end{equation}
The fact that \eqref{eq: asymp R inf s1 neq 0} holds uniformly for $z$ close to $\Gamma_R$ can be seen, for example, by deforming the contour $\Gamma_R$ slightly.
From \eqref{def of JRp1p}, we infer that $J_{R}^{(1)}$ can be analytically continued to 
\begin{align*}
\bigg(\mathcal{D}_{-a^{2}}\cup\bigcup_{j=n}^{m}\mathcal{D}_{-x_{j}}\bigg)\setminus \{-a^{2},-x_{n},\ldots,-x_{m}\},
\end{align*}
and has a double pole at $-a^{2}$ and simple poles at $-x_{n},\ldots,-x_{m}$. Hence,
\begin{align}\label{expression for R^1 s1 neq 0}
R^{(1)}(z) = & \; \frac{1}{z+a^{2}}\mbox{Res}\big(J_{R}^{(1)}(s),s = -a^{2}\big)+\frac{1}{(z+a^{2})^{2}}\mbox{Res}\big((s+a^{2})J_{R}^{(1)}(s),s = -a^{2}\big) \nonumber \\
& +\sum_{j=n}^{m} \frac{1}{z+x_{j}}\mbox{Res}(J_{R}^{(1)}(s),s = -x_{j}), \qquad \mbox{ for } z \in \mathbb{C}\setminus \Bigg(\mathcal{D}_{-a^{2}}\cup\bigcup_{j=n}^{m}\mathcal{D}_{-x_{j}}\Bigg).
\end{align}
A long but straightforward computation using (\ref{def of Pinf s1 neq 0}), \eqref{asymp of D near -a2}, \eqref{expansion of fma2 at ma2} and (\ref{def of JRp1p}) gives
\begin{subequations}\label{residues at minus a2}
\begin{align}
& \mbox{Res}\big((s+a^{2})J_{R}^{(1)}(s),s = -a^{2}\big) = \frac{5a^{2}d_{1}}{24}\begin{pmatrix}
1 & id_{1}^{-1} \\ id_{1} & -1
\end{pmatrix}, \\ 
& \mbox{Res}\big(J_{R}^{(1)}(s),s = -a^{2}\big) = \begin{pmatrix}
\frac{1}{8}\Big( -d_{1}+4a^{2}d_{0}(1+d_{0}d_{1}) \Big) & \frac{i}{8}(-1+4a^{2}d_{0}^{2}) \\
\frac{i}{24}\Big( -3d_{1}^{2}+a^{2}(7+24d_{0}d_{1}+12d_{0}^{2}d_{1}^{2}) \Big) & \frac{1}{8}\Big( d_{1}-4a^{2}d_{0}(1+d_{0}d_{1}) \Big)
\end{pmatrix}.
\end{align}
\end{subequations}
Also, using \eqref{expansion conformal map s1 neq 0}, \eqref{E_j at -x_j s1 neq 0}, \eqref{def Lambda_j s1 neq 0} and \eqref{def of JRp1p}, for $j \in \{n,\dots,m\}$, we obtain
\begin{align}\nonumber
\mbox{Res}\left( J_{R}^{(1)}(s),s= -x_{j} \right) = &\; \frac{\beta_{j}^{2}}{ic_{-x_{j}}}\begin{pmatrix}
1 & 0 \\ id_{1} & 1
\end{pmatrix}e^{-\frac{\pi i}{4}\sigma_{3}}(x_{j}-a^{2})^{-\frac{\sigma_{3}}{4}} N \begin{pmatrix}
-1 & \widetilde{\Lambda}_{j,1} \\ -\widetilde{\Lambda}_{j,2} & 1
\end{pmatrix} 
	\\ \label{residue at minus xj}
& \times N^{-1} (x_{j}-a^{2})^{\frac{\sigma_{3}}{4}}e^{\frac{\pi i}{4}\sigma_{3}}\begin{pmatrix}
1 & 0 \\ -id_{1} & 1
\end{pmatrix},
\end{align}
where
\begin{equation}
\widetilde{\Lambda}_{j,1} = \tau(\beta_{j})\Lambda_{j}^{2} \qquad \mbox{ and } \qquad \widetilde{\Lambda}_{j,2} = \tau(-\beta_{j})\Lambda_{j}^{-2}.
\end{equation}

\subsection{Large $r$ asymptotics for $\Phi$ with $\alpha \to + \infty$: regime \ref{item 3 in thm}}\label{subsection: regime 3}
In this subsection we consider the regime where $r \to + \infty$, $a \in (0,+\infty)$ is fixed and where $\vec{x}\in \mathbb{R}_{\mathrm{ord}}^{+,m}$ satisfies $0 < a^{2} < x_{1} < \cdots < x_{m}$ and \eqref{Airy regime in the main first thm}. In particular, this means that $n = 1$. 
The asymptotic analysis of $\Phi$ in this regime is essentially the same as the one carried out in Subsections \ref{subsection: g function s1 neq 0}--\ref{subsection Small norm s1 neq 0} (with $n=1$), but some extra care is needed since the $x_{j}$'s converge to $a^{2}$. In particular, the radius $\delta$ of the disks in \eqref{def of delta regime 1} must now depend on $r$ and decrease as $r \to + \infty$, which means that the disks shrink as $r \to + \infty$. We next discuss the consequences this has for the construction of the local parametrices and of the solution $R$ to the small norm RH problem. Note that \eqref{Airy regime in the main first thm b} implies that $\delta$ is of the same order as $|x_m - a^2|$.

\subsubsection{Modification of the error terms in the construction of $P^{(-x_{j})}$, $j=1,\ldots,m$}
By \eqref{def of Pinf s1 neq 0} and \eqref{lol10}, we have
\begin{align}\nonumber
P^{(-x_{j})}(z)P^{(\infty)}(z)^{-1} 
= &\; E_{-x_{j}}(z) \Phi_{\mathrm{HG}}(\sqrt{r}f_{-x_{j}}(z);\beta_{j})(s_{j}s_{j+1})^{-\frac{\sigma_{3}}{4}}e^{-\sqrt{r}f(z)\sigma_{3}}
	\\\label{PmxjPinftymodification}
& \times D(z)^{\sigma_{3}}N^{-1}(z+a^{2})^{\frac{\sigma_{3}}{4}}\begin{pmatrix}
1 & 0 \\ -id_{1} & 1
\end{pmatrix}.
\end{align}
The map $f_{-x_{j}}$ defined in \eqref{conformal map for FH} satisfies
\begin{equation}\label{expansion conformal map s1 neq 0 regime 3}
f_{-x_{j}}(z) = i c_{-x_{j}} (z+x_{j})\bigg(1+\bigO\Big( \frac{z+x_{j}}{x_{j}-a^{2}} \Big)\bigg), \quad c_{-x_{j}} = \frac{\sqrt{x_{j}-a^{2}}}{x_{j}} > 0, \quad \mbox{ as } z \to -x_{j}.
\end{equation}
As can be seen from the above error term, $f_{-x_{j}}$ is a conformal map in $\mathcal{D}_{-x_{j}}$ provided that $\delta \leq c |x_{j}-a^{2}|$ where $c>0$ is a sufficiently small constant. Note also that $c_{-x_{j}} = \bigO(\sqrt{x_{j}-a^{2}}) = \bigO(\delta^{1/2}) \to 0$ as $x_{j} \to a^{2}$. 
Hence, $|f_{-x_{j}}(z)| \asymp \delta^{3/2}$ uniformly for $z \in \partial \mathcal{D}_{-x_{j}}$.
In particular, the argument $\sqrt{r}f_{-x_{j}}(z)$ of $\Phi_{\mathrm{HG}}$ in \eqref{PmxjPinftymodification} satisfies $|\sqrt{r}f_{-x_{j}}(z)| \asymp \delta^{3/2}\sqrt{r}$.
Moreover, for $z \in \partial \mathcal{D}_{-x_{j}}$, we have $E_{-x_{j}}(z)  = \bigO(\delta^{-1/4})$, $D(z) = \bigO(1)$, and $(z+a^2)^{\sigma_3/4}= \bigO(\delta^{-1/4})$.
Consequently, using the asymptotic formula \eqref{Asymptotics HG} for $\Phi_{\mathrm{HG}}$ in \eqref{PmxjPinftymodification}, we conclude that
\begin{subequations}\label{matching strong -x_j s1 neq 0 Regime 3}
\begin{equation}\label{matching strong -x_j s1 neq 0 Regime 3a}
P^{(-x_{j})}(z)P^{(\infty)}(z)^{-1} = I + \frac{1}{\sqrt{r}f_{-x_{j}}(z)}E_{-x_{j}}(z) \Phi_{\mathrm{HG},1}(\beta_{j})E_{-x_{j}}(z)^{-1} + \bigO\big( \delta^{-7/2} r^{-1}\big)
\end{equation}
and
\begin{align}
\frac{1}{\sqrt{r}f_{-x_{j}}(z)}E_{-x_{j}}(z) \Phi_{\mathrm{HG},1}(\beta_{j})E_{-x_{j}}(z)^{-1} = \bigO(\delta^{-2} r^{-1/2})
\end{align}
\end{subequations}
uniformly for $z \in \partial \mathcal{D}_{-x_{j}}$ as $r \to +\infty$ and $\delta \to 0$.
Equation \eqref{matching strong -x_j s1 neq 0 Regime 3a} should be compared to the expansion \eqref{matching strong -x_j s1 neq 0} of $P^{(-x_{j})}(z)P^{(\infty)}(z)^{-1}$ obtained earlier.
It follows from \eqref{matching strong -x_j s1 neq 0 Regime 3} that $P^{(-x_{j})}$ satisfies \eqref{matching weak s1 neq 0}, provided that $\sqrt{r}\delta^{2}$ tends to infinity as $r \to + \infty$.

\subsubsection{Modification of the error terms in the construction of $P^{(-a^{2})}$}
By \eqref{expansion of fma2 at ma2}, we have $|f_{-a^{2}}(z)| \asymp \delta$, and hence
$\sqrt{r}f_{-a^{2}}(z)^{3/2} \asymp \delta^{3/2} \sqrt{r}$, uniformly for $z \in \partial \mathcal{D}_{-a^{2}}$. Moreover, $P^{(\infty)}(z) = \bigO(\delta^{-1/4})$ uniformly for $z \in \partial \mathcal{D}_{-a^{2}}$.
Using these observations, arguments similar to those used to obtain \eqref{matching strong -x_j s1 neq 0 Regime 3} show that the matching condition \eqref{matching strong 0 s1 neq 0} becomes
\begin{subequations}\label{matching strong 0 s1 neq 0 regime 3}
\begin{equation}
P^{(-a^{2})}(z)P^{(\infty)}(z)^{-1} = I + \frac{1}{\sqrt{r}f_{-a^{2}}(z)^{3/2}}P^{(\infty)}(z)s_{n}^{\frac{\sigma_{3}}{2}}\Phi_{\mathrm{Ai},1}s_{n}^{-\frac{\sigma_{3}}{2}}P^{(\infty)}(z)^{-1} + \bigO(\delta^{-7/2}r^{-1})
\end{equation}
and that
\begin{align}
\frac{1}{\sqrt{r}f_{-a^{2}}(z)^{3/2}}P^{(\infty)}(z)s_{n}^{\frac{\sigma_{3}}{2}}\Phi_{\mathrm{Ai},1}s_{n}^{-\frac{\sigma_{3}}{2}}P^{(\infty)}(z)^{-1} = \bigO(\delta^{-2} r^{-1/2})
\end{align}
\end{subequations}
uniformly for $z \in \partial \mathcal{D}_{-a^{2}}$ as $r \to +\infty$ and $\delta \to 0$. 
In particular, $P^{(-a^{2})}$ satisfies \eqref{matching weak s1 neq 0}, provided that $\sqrt{r} \delta^{2}$ tends to infinity as $r \to + \infty$.

\subsubsection{Modification of the error terms in the construction of $R$}
Let $R$ be defined as in \eqref{def of R s1 neq 0} with $n=1$. Using Lemma \ref{lemma: g function}, \eqref{def of Pinf s1 neq 0}, \eqref{matching strong -x_j s1 neq 0 Regime 3} and \eqref{matching strong 0 s1 neq 0 regime 3}, we infer that there exists a $c>0$ such that, as $r \to + \infty$,
\begin{align}\label{asymp for JR with a bounded away from 0 regime 3}
J_{R}(z)-I = \begin{cases}
\bigO(e^{-c \, \delta^{3/2}\sqrt{r}\sqrt{z}}), & \mbox{uniformly for } z \in (\gamma_{+}\cup \gamma_{-})\cap \Gamma_{R}, \\
\bigO(e^{-c \, \delta^{3/2}\sqrt{r}}), & \mbox{uniformly for } z \in (-a^{2},0) \setminus \mathcal{D}_{-a^{2}}, \\
\frac{J_{R}^{(1)}(z)}{\sqrt{r}}+\bigO(\tfrac{1}{\delta^{7/2}r}), & \mbox{uniformly for } z \in \partial \mathcal{D}_{-a^{2}} \cup \bigcup_{j=n}^{m} \partial \mathcal{D}_{-x_{j}},
	\\
\bigO(\tfrac{1}{\delta^2 \sqrt{r}}), & \mbox{uniformly for } z \in \partial \mathcal{D}_{-a^{2}} \cup \bigcup_{j=n}^{m} \partial \mathcal{D}_{-x_{j}},
\end{cases}
\end{align} \\[-0.5cm]
where $J_{R}^{(1)}$ is given by \eqref{def of JRp1p}. 
In particular, for each $1 \leq p \leq \infty$ and any $k_{1},\ldots,k_{m}\in \mathbb{N}_{\geq 0}$, there exist constants $c>0$ and $C > 0$ such that for all $r \geq 2$ we have
\begin{subequations}\label{estimate Lp norm regime 3} 
\begin{align}
& \| \partial_{\beta}^{k}(J_{R}-I) \|_{L^{p}(\Gamma_{R}\setminus (\partial \mathcal{D}_{-a^{2}} \cup \bigcup_{l=1}^{m} \partial \mathcal{D}_{-x_{l}}))} \leq e^{-c \, \delta^{3/2} \sqrt{r}}, \label{estimate Lp norm 1 regime 3} 
	\\
& \| \partial_{\beta}^{k}(J_{R}-I-\tfrac{J_{R}^{(1)}(z)}{\sqrt{r}}) \|_{L^{p}(\partial \mathcal{D}_{-a^{2}} \cup \bigcup_{l=1}^{m} \partial \mathcal{D}_{-x_{l}})} \leq C \, \delta^{1/p}\frac{\log^{k} r}{\delta^{7/2}r} \label{estimate Lp norm 2 regime 3}
	\\
& \| \partial_{\beta}^{k}(J_{R}-I) \|_{L^{p}(\Gamma_R)} \leq C \, \delta^{1/p}\frac{\log^{k} r}{\delta^{2}\sqrt{r}}, \label{estimate Lp norm 3 regime 3}
\end{align}
\end{subequations}
where $k=k_{1}+\dots+k_{m}$ and $\partial_{\beta}^{k}=\partial_{\beta_{1}}^{k_{1}}\ldots \partial_{\beta_{m}}^{k_{m}}$. By (\ref{calCJRestimate}) and (\ref{estimate Lp norm 3 regime 3}), we see that $I-\mathcal{C}_{J_{R}}$ is invertible and $R$ is given by \eqref{R in terms of muR} provided that $\delta^{2}\sqrt{r}$ is large enough.
In a similar way as in \eqref{eq: asymp R inf s1 neq 0}, we infer from 
\eqref{estimate Lp norm regime 3} that $R$ satisfies
\begin{align}
& \partial_{\beta}^{k}R(z) = \partial_{\beta}^{k}\bigg(I + \frac{R^{(1)}(z)}{\sqrt{r}}\bigg) + \bigO\Big(\frac{\log^{k}r}{\delta^{4}r}\Big), \qquad \partial_{\beta}^{k}R^{(1)}(z) = \bigO\Big(\frac{\log^{k}r}{\delta^{2}}\Big), & & \mbox{ as } r \to  +\infty, \label{eq: asymp R inf s1 neq 0 regime 3} 
\end{align}
uniformly for $z \in \mathbb{C}\setminus \Gamma_{R}$ and with $\vec{x} \in \mathbb{R}_{\mathrm{ord}}^{+,m}$ satisfying $a^{2}<x_{1}$ and \eqref{Airy regime in the main first thm}, where $R^{(1)}$ is given by \eqref{def of R1}.

\section{Large $r$ asymptotics for $\Phi$ with $\alpha \to + \infty$: regime 2}\label{section: regime2}
In this section we analyze $\Phi(rz;r\vec{x},\vec{s},\alpha)$ as $r \to +\infty$ in the regime where $a \to 0$, $\alpha = a\sqrt{r}\to + \infty$ and simultaneously $\vec{x}$ lies in a compact subset of $\mathbb{R}_{\mathrm{ord}}^{+,m}$ and $\vec{s}$ lies in a compact subset of $(0,+\infty)^{m}$. This regime is relevant for part \ref{item 2 in thm} of Theorem \ref{thm:s1 neq 0}.

\medskip In this regime, if one tries to repeat the same construction of $P^{(-a^{2})}$ as in Subsection \ref{subsection: local param bessel s1 neq 0}, one faces several issues. As can be seen from \eqref{expansion of fma2 at ma2}, for $f_{-a^{2}}$ to be a conformal map in $\mathcal{D}_{-a^{2}}$, one needs to choose the radius $\delta$ small compared to $a^{2}$. As a consequence, the expansion \eqref{matching strong 0 s1 neq 0} converges more slowly to $I$ if $a \to 0$ at a slow rate, and the construction completely breaks down if $a \to 0$ at a fast rate. This is a serious obstacle to the study of the regime $a \to 0$, which we circumvent by constructing a local parametrix $P^{(0)}$ in a disk $\mathcal{D}_{0}$ centered at $0$.

\medskip We take the radii of $\mathcal{D}_{-x_{j}}$, $j=1,\ldots,m$ as 
\begin{align}\label{def 2 of mathfrak r}
\delta=\frac{1}{M}\min_{0\leq j <k \leq m} \{x_{k}-x_{j}, x_{1}-a^{2}\}, \qquad M \geq 3, \qquad M \mbox{ fixed},
\end{align}
and we take $\delta_{0}=a^{2}+\frac{x_{1}-a^{2}}{M}$ for the radius of $\mathcal{D}_{0}$. Since the $x_{j}$'s remain bounded away from each other and from $0$, both $\delta$ and $\delta_{0}$ are bounded away from $0$. We choose $M \geq 3$ so that the disks do not intersect each other, and by definition of $\delta_{0}$ we have $-a^{2} \in \mathcal{D}_{0}$. 

%

\medskip The local parametrices around $-x_{j}$, $j=1,\ldots,m$, are identical to those constructed in Section \ref{Section: Steepest descent with s1>0} (with $n=1$), so we do repeat their construction here. 

\subsection{Local parametrix around $0$}\label{subsection: local param bessel s1 neq 0 true}
Our construction is based on the Bessel model RH problem from \cite{KMcLVAV}, whose solution is denoted $\Phi_{\mathrm{Be}}$ and given by \eqref{Phi explicit}. To the best of our knowledge, this is the first time that $\Phi_{\mathrm{Be}}$ is used for large values of $\alpha$. A main difference compared to the case where $\alpha$ is bounded lies in the rather complicated asymptotics \eqref{asymp for Phi Be with large alpha} for $\Phi_{\mathrm{Be}}(z,\alpha)$ as $z \to \infty$ and simultaneously $\alpha \to +\infty$. An important observation is that the function $\xi$ defined in \eqref{def of p and xi} is related to the $g$-function \eqref{def of g s1 neq 0} by the relation
\begin{align}\label{gftheta}
\sqrt{r}g(z) = \sqrt{r}f(z)+ \frac{\pi i \alpha}{2}\theta(z) = \sqrt{rz+\alpha^{2}} + \alpha \log \frac{\sqrt{z}}{a+\sqrt{a^{2}+z}} = \alpha  \xi\Big(\frac{\sqrt{rz}}{\alpha}\Big).
\end{align}
Here, we will simply use $z \mapsto \frac{rz}{4}$ as the conformal map, and we deform the lenses in a neighborhood of $0$ such that
\begin{equation}\label{deformation of the lenses local param 0 Bessel}
(\gamma_{1,+} \cap \mathcal{D}_{0}) \subset -a^{2}+e^{\frac{2\pi i}{3}}(0,+\infty), \qquad (\gamma_{1,-} \cap \mathcal{D}_{0}) \subset -a^{2}+e^{-\frac{2\pi i}{3}}(0,+\infty).
\end{equation}
We seek a local parametrix $P^{(0)}$ of the form (see Figure \ref{bessel0figure})
\begin{align}\label{P0 def local param}
P^{(0)}(z) = E_{0}(z)\Phi_{\mathrm{Be}}\Big(\frac{rz}{4};\alpha\Big)\left\{ \begin{array}{l l}
\begin{pmatrix}
1 & 0 \\
e^{\pi i \alpha} & 1
\end{pmatrix}, & z \in \mathfrak{I}_{+} \\[0.3cm]
\begin{pmatrix}
1 & 0 \\
-e^{-\pi i \alpha} & 1
\end{pmatrix}, & z \in \mathfrak{I}_{-} \\
I, & \text{else}
\end{array} \right\} s_{1}^{-\frac{\sigma_{3}}{2}}e^{-\sqrt{r}f(z)\sigma_{3}}e^{-\frac{\pi i \alpha}{2}\theta(z)\sigma_{3}},
\end{align} 
where $\mathfrak{I}_{\pm} = \{z: \pm \arg z \in (\frac{2\pi}{3},\pi) \mbox{ and } \pm \arg(z+a^{2}) \in (0,\frac{2\pi}{3})\}$, and $E_{0}$ is an analytic matrix-valued function in $\mathcal{D}_{0}$ that will be determined below. It can be verified from the jumps \eqref{Jump for P_Be} of $\Phi_{\mathrm{Be}}$ that $P^{(0)}$ has the same jumps as $S$ inside $\mathcal{D}_{0}$, as desired.

\begin{figure}[t]
\begin{center}
\begin{tikzpicture}
\node at (0.15,-0.18) {$0$};
\node at (-1.5,-0.24) {$-a^2$};

\node at (-1.3,1) {$\mathfrak{I}_{+}$};
\node at (-1.3,-1) {$\mathfrak{I}_{-}$};


\draw[fill] (0,0) circle (0.05);
\draw[fill] (-1.5,0) circle (0.05);
\draw[fill] (-4,0) circle (0.05);
\draw (-4.8,0)--(0,0);
\draw (0,0)--(120:2.2); \draw (0,0)--(-120:2.2);
\draw (-1.5,0)--++(120:2.2); \draw (-1.5,0)--++(-120:2.2);
\end{tikzpicture}
    \caption{\label{bessel0figure} The subsets $\mathfrak{I}_\pm$ of the complex $z$-plane.}
\end{center}
\end{figure}
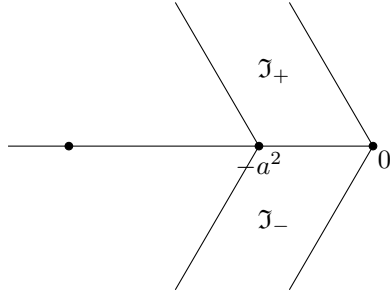

We now choose $E_{0}$ such that the matching condition \eqref{matching weak s1 neq 0} holds. Since $\delta_{0}$ remains bounded away from $0$ as $r \to + \infty$, $\sqrt{rz}\alpha^{-1}\to \infty$ for all $z \in \partial \mathcal{D}_{0}$. Therefore, we can use \eqref{asymp for Phi Be with large alpha} with $z$ replaced by $\frac{rz}{4}$, and we obtain
\begin{align}
P^{(0)}(z)P^{(\infty)}(z)^{-1} = & E_{0}(z)\big( \sqrt{\pi}(\alpha^{2}+rz)^{\frac{1}{4}} \big)^{-\sigma_{3}}N \bigg( I+ \frac{2\Phi_{\mathrm{Be},1}(\frac{rz}{4};\alpha)}{\sqrt{rz}} + \bigO(r^{-1}) \bigg) \nonumber \\
& \times \left\{ \begin{array}{l l}
\begin{pmatrix}
1 & 0 \\
e^{-2\sqrt{r}f(z)} & 1
\end{pmatrix}, & z \in \mathfrak{I}_{+} \\[0.3cm]
\begin{pmatrix}
1 & 0 \\
-e^{-2\sqrt{r}f(z)} & 1
\end{pmatrix}, & z \in \mathfrak{I}_{-} \\
I, & \text{else}
\end{array} \right\} s_{1}^{-\frac{\sigma_{3}}{2}}P^{(\infty)}(z)^{-1} \quad \mbox{ as } r \to + \infty, \label{lol15}
\end{align}
uniformly for $z \in \partial \mathcal{D}_{0}$. On the other hand, we deduce from \eqref{def of g s1 neq 0} that
\begin{align*}
f(z) = \sqrt{z} + o(1), \qquad \mbox{as } a \to 0,
\end{align*}
uniformly for $z$ in compacts subsets of $\mathbb{C}\setminus \{0\}$, which implies
\begin{align}\label{lol16}
\begin{pmatrix}
1 & 0 \\
\pm e^{-2\sqrt{r}f(z)} & 1
\end{pmatrix} = I + \bigO(e^{-c\sqrt{r}}), \qquad \mbox{as } r \to +\infty, a \to 0,
\end{align}
uniformly for $z \in \partial \mathcal{D}_{0}\cap (\mathfrak{I}_{+} \cup \mathfrak{I}_{-})$, for a certain $c>0$. In view of \eqref{lol15} and \eqref{lol16}, we define
\begin{align}\label{expression for E0}
E_{0}(z) = P^{(\infty)}(z) s_{1}^{\frac{\sigma_{3}}{2}}N^{-1} \big( \sqrt{\pi}(\alpha^{2}+rz)^{\frac{1}{4}} \big)^{\sigma_{3}},
\end{align}
and we verify that $E_{0}$ is indeed analytic inside $\mathcal{D}_{0}$, as desired. Furthermore, we have
\begin{equation}\label{matching strong 0 true}
P^{(0)}(z)P^{(\infty)}(z)^{-1} = I + \frac{2}{\sqrt{rz}}P^{(\infty)}(z)s_{1}^{\frac{\sigma_{3}}{2}}\Phi_{\mathrm{Be},1}\Big( \frac{rz}{4};\alpha \Big)s_{1}^{-\frac{\sigma_{3}}{2}}P^{(\infty)}(z)^{-1} + \bigO(r^{-1}),
\end{equation}
as $r \to +\infty$ uniformly for $z \in \partial \mathcal{D}_{0}$. Finally, it is directly seen from \eqref{expression for E0} that
\begin{align}\label{expression for E0 at 0}
E_{0}(0) = P^{(\infty)}(0) s_{1}^{\frac{\sigma_{3}}{2}}N^{-1} \big( \sqrt{\pi a} \, r^{\frac{1}{4}} \big)^{\sigma_{3}}.
\end{align}
\subsection{Small norm problem}\label{subsubsection: a to 0 in small norm}
The function
\begin{equation}\label{def of R s1 neq 0 a to 0}
R(z) := \left\{ \begin{array}{l l}
S(z)P^{(\infty)}(z)^{-1}, & \mbox{for } z \in \mathbb{C}\setminus \big( \mathcal{D}_{0} \cup \bigcup_{j=1}^{m}\mathcal{D}_{-x_{j}} \big), \\
S(z)P^{(-x_{j})}(z)^{-1}, & \mbox{for } z \in \mathcal{D}_{-x_{j}}, \, j \in \{1,\dots,m\}, \\
S(z)P^{(0)}(z)^{-1}, & \mbox{for } z \in \mathcal{D}_{0},
\end{array} \right.
\end{equation}
is analytic in $\mathbb{C}\setminus \Gamma_{R}$, where
\begin{align*}
\Gamma_{R} = \bigg( \partial \mathcal{D}_{0} \cup \bigcup_{j=1}^{m} \partial \mathcal{D}_{-x_{j}} \cup \gamma_{+} \cup \gamma_{-} \bigg) \setminus \bigg(\mathcal{D}_{0} \cup \bigcup_{j=1}^{m} \mathcal{D}_{-x_{j}} \bigg),
\end{align*}
see Figure \ref{figure4}. From \eqref{matching strong -x_j s1 neq 0} and \eqref{matching strong 0 true}, as $r \to + \infty$ the jumps $J_{R}=R_{-}^{-1}R_{+}$ satisfy
\begin{align}\label{asymp for JR with a to 0}
J_{R}(z)-I = \begin{cases}
\bigO(e^{-c \, \sqrt{r}\sqrt{z}}), & \mbox{unif. for } z \in (\gamma_{+}\cup \gamma_{-})\cap \Gamma_{R}, \\
\frac{J_{R}^{(1)}(z)}{\sqrt{r}}+\bigO(\tfrac{1}{r}), & \mbox{unif. for } z \in \partial \mathcal{D}_{0} \cup \bigcup_{j=1}^{m} \partial \mathcal{D}_{-x_{j}},
\end{cases}
\end{align} 
where $J_{R}^{(1)}$ is given by
\begin{align} \label{def of JRp1p a to 0}
J_{R}^{(1)}(z) := \begin{cases} 
\frac{1}{f_{-x_{j}}(z)}E_{-x_{j}}(z) \Phi_{\mathrm{HG},1}(\beta_{j})E_{-x_{j}}(z)^{-1}, & z \in \partial \mathcal{D}_{-x_{j}}, \, j=1,\ldots,m, \\
\frac{2}{\sqrt{z}}P^{(\infty)}(z)s_{1}^{\frac{\sigma_{3}}{2}}\Phi_{\mathrm{Be},1}\big( \frac{rz}{4};\alpha \big)s_{1}^{-\frac{\sigma_{3}}{2}}P^{(\infty)}(z)^{-1}, & z \in \partial\mathcal{D}_{0}.
\end{cases}
\end{align}
As in Subsection \ref{subsection Small norm s1 neq 0}, we obtain that, for $k_{1},\ldots,k_{m}\in \mathbb{N}_{\geq 0}$,
\begin{align}\label{expansion of R as r to inf and a to 0}
& \partial_{\beta}^{k}R(z) = \partial_{\beta}^{k}\bigg(I + \frac{R^{(1)}(z)}{\sqrt{r}}\bigg) + \bigO\Big(\frac{\log^{k}r}{r}\Big), \qquad \partial_{\beta}^{k}R^{(1)}(z) = \bigO(\log^{k}r), & & \mbox{ as } r \to  +\infty, 
\end{align}
uniformly for $z \in \mathbb{C}\setminus \Gamma_{R}$, for $a \to 0$, for $\vec{x}$ in compact subsets of $\mathbb{R}_{\mathrm{ord}}^{+,m}$ and for $\beta_{1},\ldots,\beta_{m}$ in compact subsets of $i \mathbb{R}$, where $k=k_{1}+\ldots+k_{m}$, $\partial_{\beta}^{k}=\partial_{\beta_{1}}^{k_{1}}\ldots \partial_{\beta_{m}}^{k_{m}}$ and
\begin{equation}\label{integral representation of Rp1p as a to 0}
R^{(1)}(z) = \frac{1}{2\pi i}\int_{\partial \mathcal{D}_{0}} \frac{J_{R}^{(1)}(s)}{s-z}ds + \sum_{j=1}^{m}\frac{1}{2\pi i}\int_{\partial\mathcal{D}_{-x_{j}}} \frac{J_{R}^{(1)}(s)}{s-z}ds.
\end{equation}
We deduce from \eqref{def of JRp1p a to 0} and \eqref{def of PhiBe1} that $J_{R}^{(1)}(z)$ can be analytically continued to 
\begin{align*}
\bigg(\mathcal{D}_0 \cup\bigcup_{j=n}^{m}\mathcal{D}_{-x_{j}}\bigg)\setminus \{-a^{2},-x_{1},\ldots,-x_{m}\},
\end{align*}
and has a double pole at $-a^{2}$ and simple poles at $-x_{1},\ldots,-x_{m}$.

In Section \ref{Section: integration s1 >0}, we will need the explicit values of $R^{(1)}(z)$ for $z$ outside the disks, and also for $z=0$. Although the computations involved in the evaluation of the residues at $-a^2$ are quite different from the ones in Subsection \ref{subsection Small norm s1 neq 0}, we find, somewhat remarkably, that the exact same formula \eqref{expression for R^1 s1 neq 0} (with $n = 1$ and the residues given by \eqref{residues at minus a2} and \eqref{residue at minus xj}) holds for $R^{(1)}(z)$ for $z$ outside the disks.
To obtain an explicit expression for $R(0)$, we evaluate the integrals in \eqref{integral representation of Rp1p as a to 0} by means of residue calculations, and we get
\begin{align}\label{expression for R^1 at 0 as a to 0}
R^{(1)}(0) = & \; \frac{1}{a^{2}}\mbox{Res}\big(J_{R}^{(1)}(s),s = -a^{2}\big)+\frac{1}{a^{4}}\mbox{Res}\big((s+a^{2})J_{R}^{(1)}(s),s = -a^{2}\big) \nonumber \\
& +\sum_{j=1}^{m} \frac{1}{x_{j}}\mbox{Res}(J_{R}^{(1)}(s),s = -x_{j}) - J_{R}^{(1)}(0),
\end{align}
with the residues given by \eqref{residues at minus a2}--\eqref{residue at minus xj}) and
\begin{align*}
J_{R}^{(1)}(0) & = \sqrt{r} P^{(\infty)}(0)s_{1}^{\frac{\sigma_{3}}{2}} \begin{pmatrix}
\frac{-3p(0)+p(0)^{3}}{24\alpha} & \frac{i(-p(0)+p(0)^{3})}{4\alpha} \\
\frac{i(-p(0)+p(0)^{3})}{4\alpha} & \frac{3p(0)-p(0)^{3}}{24\alpha}
\end{pmatrix}s_{1}^{-\frac{\sigma_{3}}{2}}P^{(\infty)}(0)^{-1} \\
& = \sqrt{r} P^{(\infty)}(0)s_{1}^{\frac{\sigma_{3}}{2}} \begin{pmatrix}
-\frac{1}{12\alpha} & 0 \\
0 & \frac{1}{12\alpha}
\end{pmatrix}s_{1}^{-\frac{\sigma_{3}}{2}}P^{(\infty)}(0)^{-1} = \frac{1}{12 a^2}\begin{pmatrix} d_1 & i \\ -i(a^2 - d_1^2) & -d_1 \end{pmatrix},
\end{align*}
and $P^{(\infty)}$ is given by \eqref{def of Pinf s1 neq 0} and the function $p$ is given by \eqref{def of p and xi}.
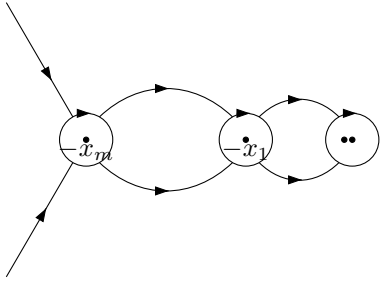
\begin{figure}
\begin{center}
\begin{tikzpicture}[scale=0.7]
\draw[fill] (0,0) circle (0.05);
\draw (0,0) circle (0.5);

\draw (120:0.5) -- (120:3);
\draw (-120:0.5) -- (-120:3);

\draw ($(0,0)+(60:0.5)$) .. controls (1,1.15) and (2,1.15) .. ($(3,0)+(120:0.5)$);
\draw ($(0,0)+(-60:0.5)$) .. controls (1,-1.15) and (2,-1.15) .. ($(3,0)+(-120:0.5)$);
\draw ($(3,0)+(60:0.5)$) .. controls (3.7,0.88) and (4.3,0.88) .. ($(5,0)+(120:0.5)$);
\draw ($(3,0)+(-60:0.5)$) .. controls (3.7,-0.88) and (4.3,-0.88) .. ($(5,0)+(-120:0.5)$);

\draw[fill] (3,0) circle (0.05);
\draw (3,0) circle (0.5);
\draw[fill] (5,0) circle (0.05);
\draw (5,0) circle (0.5);
\draw[fill] (4.85,0) circle (0.05);

\node at (0.,-0.2) {$-x_{m}$};
\node at (3,-0.2) {$-x_{1}$};

\draw[black,arrows={-Triangle[length=0.18cm,width=0.12cm]}]
(-120:1.5) --  ++(60:0.001);
\draw[black,arrows={-Triangle[length=0.18cm,width=0.12cm]}]
(120:1.3) --  ++(-60:0.001);
\draw[black,arrows={-Triangle[length=0.18cm,width=0.12cm]}]
($(0.08,0)+(90:0.5)$) --  ++(0:0.001);

\draw[black,arrows={-Triangle[length=0.18cm,width=0.12cm]}]
($(3.08,0)+(90:0.5)$) --  ++(0:0.001);
\draw[black,arrows={-Triangle[length=0.18cm,width=0.12cm]}]
($(5.08,0)+(90:0.5)$) --  ++(0:0.001);

\draw[black,arrows={-Triangle[length=0.18cm,width=0.12cm]}]
(1.55,0.97) --  ++(0:0.001);
\draw[black,arrows={-Triangle[length=0.18cm,width=0.12cm]}]
(1.55,-0.97) --  ++(0:0.001);

\draw[black,arrows={-Triangle[length=0.18cm,width=0.12cm]}]
(4.05,0.76) --  ++(0:0.001);
\draw[black,arrows={-Triangle[length=0.18cm,width=0.12cm]}]
(4.05,-0.76) --  ++(0:0.001);
\end{tikzpicture}
\end{center}
\caption{The contour $\Gamma_{R}$ with $m=2$ and $a \to 0$. The two unlabeled dots are $-a^{2}$ and $0$.} 
\label{figure4}
\end{figure}
\section{Proof of Theorem \ref{thm:s1 neq 0}}\label{Section: integration s1 >0}
In this section, we prove Theorem \ref{thm:s1 neq 0} in three steps using the following differential identity stated already in \eqref{DIFF identity final form general case}:
\begin{equation}\label{lol17}
\partial_{s_{k}} \log F_{\alpha}(r\vec{x},\vec{s})= K_{\infty} + \sum_{j=1}^{m}K_{-x_{j}} + K_{0}.
\end{equation}
First, we obtain large $r$ asymptotics for $K_{\infty},K_{-x_{1}},$ $\ldots,K_{-x_{m}},K_{0}$ using the analysis of Sections \ref{Section: Steepest descent with s1>0} and \ref{section: regime2}. Second, we substitute these asymptotics into the differential identity \eqref{lol17} and use the change of variables \eqref{def of beta_j s1 neq 0} to get large $r$ asymptotics for $\partial_{\beta_{k}}\log F_{\alpha}(r\vec{x},\vec{s})$, $k=1,\ldots,m$. Third, we integrate these asymptotics in $\beta_{1},\ldots,\beta_{m}$ and obtain large $r$ asymptotics for $\log F_{\alpha}(r\vec{x},\vec{s})$. 

\medskip The three regimes considered in Sections \ref{Section: Steepest descent with s1>0} and \ref{section: regime2} can all be treated at once, save for the analysis of $K_{0}$. A main difference between these regimes is that the terms that are of order $\bigO(r^{-1/2})$ and $\bigO(r^{-1})$ in regimes $1$ and $2$ become of order $\bigO(\delta^{-2}r^{-1/2})$ and $\bigO(\delta^{-4}r^{-1})$ in regime 3. Since $\delta \asymp |x_{m}-a^{2}|$ is of order $1$ in regimes $1$ and $2$, we can include all the factors of $\delta$ in the error terms, and then simply replace $\delta$ by $1$ in regimes $1$ and $2$ at the end. Moreover, we will present all calculations for any value of $n \in \{1,\dots,m+1\}$, but we recall that $n=1$ in regimes $2$ and $3$.

\paragraph{Asymptotics for $K_{\infty}$.} By \eqref{eq:Tasympinf s1 neq 0}, \eqref{eq:Pinf asympinf s1 neq 0} and \eqref{def of R s1 neq 0}, we have $T_{1} = R_{1} + P_{1}^{(\infty)}$, where $R_{1}$ is defined via the expansion $R(z) = I + \frac{R_{1}}{z} + \bigO(z^{-2})$ as $z \to \infty$. Therefore, by \eqref{eq: asymp R inf s1 neq 0}, \eqref{eq: asymp R inf s1 neq 0 regime 3} and \eqref{def of beta_j s1 neq 0}, for any $k \in \{1,\ldots,m\}$, we have 
\begin{align}\label{asymptotics of T1}
& \partial_{s_{k}}T_{1} = \partial_{s_{k}}P_{1}^{(\infty)} + \frac{\partial_{s_{k}}R_{1}^{(1)}}{\sqrt{r}} + \bigO\Big(\frac{\log r}{\delta^{4}r}\Big), \qquad \mbox{as } r \to + \infty,
\end{align}
where $R_{1}^{(1)}$ is such that $R^{(1)}(z) = \frac{R_{1}^{(1)}}{z} + \bigO(z^{-2})$ as $z \to \infty$. Here we have used that the expansions in $r$ and $z$ can be computed in any order, which is a consequence of \eqref{R in terms of muR}.
The matrix $\partial_{s_{k}}R_{1}^{(1)}$ can be calculated explicitly using \eqref{expression for R^1 s1 neq 0}. Recalling \eqref{K inf}, \eqref{eq:Tasympinf s1 neq 0} and \eqref{Pinf 1 12 s1 neq 0}, a computation yields
\begin{align}\nonumber
K_{\infty} & = - \frac{i}{2}\sqrt{r}\partial_{s_{k}}T_{1,12} = -\frac{i}{2}\bigg( \partial_{s_{k}} P_{1,12}^{(\infty)}\sqrt{r} + \partial_{s_{k}}R_{1,12}^{(1)} + \bigO \bigg( \frac{\log r}{\delta^{4}\sqrt{r}} \bigg) \bigg) 
	\\ \label{K inf asymp s1 neq 0}
& = \frac{1}{2}\partial_{s_{k}}d_{1} \sqrt{r} - \sum_{j=n}^{m} \frac{ \partial_{s_{k}}\big( \beta_{j}^{2}(\widetilde{\Lambda}_{j,1}-\widetilde{\Lambda}_{j,2}+2i) \big)}{4i c_{-x_{j}}\sqrt{x_{j}-a^{2}}} + \frac{a^{2}}{4}\partial_{s_{k}}(d_{0}^{2})+ \bigO \bigg( \frac{\log r}{\delta^{4}\sqrt{r}} \bigg).
\end{align}

\paragraph{Asymptotics for $K_{-x_{j}}$ with $j \in \{1,\dots,n-1\}$.}
Suppose $j \in \{1,\dots,n-1\}$. For $z$ outside the lenses such that $z \in \mathcal{D}_{-x_{j}}$ and $\im z > 0$, using \eqref{def of S s1 neq 0}, \eqref{def of R s1 neq 0}, \eqref{def of trivial local param -xj}, \eqref{def of T s1 neq 0}, \eqref{def of Gj} and \eqref{gftheta}, we find
\begin{align*}
T(z) & = R(z)P^{(x_{j})}(z) = R(z)P^{(\infty)}(z)\begin{pmatrix}
1 & h_{-x_{j}}(z) \\
0 & 1
\end{pmatrix} \\
& = \begin{pmatrix}
1 & 0 \\
- \frac{i a^{2}}{2}\sqrt{r} & 1
\end{pmatrix} r^{\frac{\sigma_{3}}{4}}G_{j}(rz;r\vec{x},\vec{s},\alpha) \begin{pmatrix}
1 & \frac{s_{j+1}-s_{j}}{2\pi i} \log (r(z+x_{j})) \\ 0 & 1
\end{pmatrix} e^{-\sqrt{r}f(z)\sigma_{3}}.
\end{align*}
Therefore, using also \eqref{asymp of h -xj}, 
\begin{align*}
G_{j}(-rx_{j};r\vec{x},\vec{s},\alpha) = r^{-\frac{\sigma_{3}}{4}}\begin{pmatrix}
1 & 0 \\
\frac{i a^{2}}{2}\sqrt{r} & 1
\end{pmatrix} R(-x_{j})P^{(\infty)}(-x_{j})e^{\sqrt{r}f(-x_{j})\sigma_{3}}\begin{pmatrix}
1 & \widehat{c}\\ 0 & 1
\end{pmatrix},
\end{align*}
where the exact expression for $\widehat{c}= \widehat{c}(r)$ is unimportant for us. Substituting the above expression into \eqref{K -xj} and using \eqref{eq: asymp R inf s1 neq 0}, we find that
\begin{align}\label{K xj part 2 asymp s1 neq 0}
K_{-x_{j}} = \bigO(e^{-c \sqrt{r}}), \qquad \mbox{as }r \to + \infty,
\end{align}
for some constant $c>0$.
\paragraph{Asymptotics for $K_{-x_{j}}$ with $j \in \{n,\dots,m\}$.} 
Suppose $j \in \{n,\dots,m\}$. For $z$ outside the lenses such that $z \in \mathcal{D}_{-x_{j}}$ and $\im z > 0$, using now \eqref{def of S s1 neq 0}, \eqref{def of R s1 neq 0} and \eqref{lol10}, we have
\begin{equation}\label{lol11}
T(z) = R(z)E_{-x_{j}}(z)\Phi_{\mathrm{HG}}(\sqrt{r}f_{-x_{j}}(z);\beta_{j})(s_{j}s_{j+1})^{-\frac{\sigma_{3}}{4}}e^{-\sqrt{r}f(z)\sigma_{3}}.
\end{equation}
By \eqref{def of beta_j s1 neq 0}, \eqref{expansion conformal map s1 neq 0} and \eqref{precise asymptotics of Phi HG near 0}, as $z \to -x_{j}$ from outside the lenses, $\im z > 0$, we find
\begin{equation}\label{lol 2 s1 neq 0}
\Phi_{\mathrm{HG}}(\sqrt{r}f_{-x_{j}}(z);\beta_{j})(s_{j}s_{j+1})^{-\frac{\sigma_{3}}{4}} = \begin{pmatrix}
\Psi_{j,11} & \Psi_{j,12} \\
\Psi_{j,21} & \Psi_{j,22}
\end{pmatrix} (I + \bigO(z+x_{j})) \begin{pmatrix}
1 & \frac{s_{j+1}-s_{j}}{2\pi i}\log(r(z + x_{j})) \\
0 & 1
\end{pmatrix} ,
\end{equation}
where
\begin{align}
& \Psi_{j,11} = \frac{\Gamma(1-\beta_{j})}{(s_{j}s_{j+1})^{\frac{1}{4}}}, & & \Psi_{j,21} = \frac{\Gamma(1+\beta_{j})}{(s_{j}s_{j+1})^{\frac{1}{4}}}, & & \Psi_{j,11}\Psi_{j,21} = \beta_{j} \frac{2\pi i}{s_{j+1}-s_{j}}. \label{Psi j entries}
\end{align}
The exact values of $\Psi_{j,12}$ and $\Psi_{j,22}$ can also be computed explicitly, but they are unimportant for us. By combining \eqref{lol11}--\eqref{lol 2 s1 neq 0} with \eqref{def of Gj}, \eqref{def of T s1 neq 0} and \eqref{gftheta}, we find
\begin{equation}\label{lol12}
G_{j}(-rx_{j};r\vec{x},\vec{s},\alpha) = r^{-\frac{\sigma_{3}}{4}}\begin{pmatrix}
1 & 0 \\
i \frac{a^{2}}{2}\sqrt{r} & 1
\end{pmatrix} R(-x_{j})E_{-x_{j}}(-x_{j})\begin{pmatrix}
\Psi_{j,11} & \Psi_{j,12} \\ \Psi_{j,21} & \Psi_{j,22}
\end{pmatrix}.
\end{equation}
The change of variables \eqref{def of beta_j s1 neq 0} shows that $\partial_{s_{1}} = -(2\pi i s_1)^{-1} \partial_{\beta_1}$ and $\partial_{s_{k}} = (2\pi i s_k)^{-1}(\partial_{\beta_{k-1}} - \partial_{\beta_k})$ for $k = 2, \dots, m$.
Hence \eqref{eq: asymp R inf s1 neq 0} and \eqref{eq: asymp R inf s1 neq 0 regime 3} imply that, for each $k = 1, \dots, m$,
\begin{align}\label{Ratminusxj}
& R(-x_{j}) = I + \bigO\Big(\frac{1}{\delta^{2} \sqrt{r}}\Big) \quad
 \mbox{and} \quad \partial_{s_k}R(-x_{j}) = \bigO\Big(\frac{\log{r}}{\delta^{2} \sqrt{r}}\Big), & & \mbox{ as } r \to  +\infty.
\end{align}
Substituting \eqref{lol12} into \eqref{K -xj} and using \eqref{E_j at -x_j s1 neq 0}, \eqref{Ratminusxj} as well as the identities $\Gamma(z+1) = z\Gamma(z)$ and $\Gamma(z)\Gamma(1-z) = \pi/\sin(\pi z)$, we find after a long computation that
\begin{align}\nonumber
K_{-x_{j}} = &\; \beta_{j} \partial_{s_{k}} \log \frac{\Gamma(1+\beta_{j})}{\Gamma(1-\beta_{j})} -  2\beta_{j} \partial_{s_{k}} \log \Lambda_{j} 
	\\  
& +  \frac{\partial_{s_{k}}d_{1}}{2\sqrt{x_{j}-a^{2}}}\Big( \beta_{j}^{2}(\widetilde{\Lambda}_{j,1}+\widetilde{\Lambda}_{j,2})+2i\beta_{j} \Big) + \bigO \bigg( \frac{\log r}{\delta^{5/2}\sqrt{r}} \bigg), \qquad \mbox{as } r \to + \infty. \label{K xj part 1 asymp s1 neq 0}
\end{align}

\paragraph{Asymptotics for $K_{0}$ in the regimes where $a$ is bounded away from $0$.} 
For $z$ near $0$, we use \eqref{def of S s1 neq 0}, \eqref{def of R s1 neq 0}, \eqref{def of T s1 neq 0} and \eqref{def of G_0} to obtain
\begin{align*}
T(z) & = R(z)P^{(\infty)}(z) = \begin{pmatrix}
1 & 0 \\
- \frac{i a^{2}}{2}\sqrt{r} & 1
\end{pmatrix} r^{\frac{\sigma_{3}}{4}}G_{0}(rz;r\vec{x},\vec{s},\alpha)(rz)^{\frac{a\sqrt{r}}{2}\sigma_{3}} \begin{pmatrix}
1 & s_{1}h(rz) \\ 0 & 1
\end{pmatrix} e^{-\sqrt{r}g(z)\sigma_{3}},
\end{align*}
from which we get
\begin{align*}
G_{0}(rz;r\vec{x},\vec{s},\alpha) & = r^{-\frac{\sigma_{3}}{4}} \begin{pmatrix}
1 & 0 \\
\frac{i a^{2}}{2}\sqrt{r} & 1
\end{pmatrix} R(z)P^{(\infty)}(z) e^{\sqrt{r}g(z)\sigma_{3}} \begin{pmatrix}
1 & -s_{1}h(rz) \\ 0 & 1
\end{pmatrix}(rz)^{-\frac{a\sqrt{r}}{2}\sigma_{3}} \\
& = r^{-\frac{\sigma_{3}}{4}} \begin{pmatrix}
1 & 0 \\
\frac{i a^{2}}{2}\sqrt{r} & 1
\end{pmatrix} R(z)P^{(\infty)}(z) e^{\sqrt{r}g(z)\sigma_{3}} (rz)^{-\frac{a\sqrt{r}}{2}\sigma_{3}}\begin{pmatrix}
1 & -s_{1}h(rz)(rz)^{a\sqrt{r}} \\ 0 & 1
\end{pmatrix} \\
& = r^{-\frac{\sigma_{3}}{4}} \begin{pmatrix}
1 & 0 \\
\frac{i a^{2}}{2}\sqrt{r} & 1
\end{pmatrix} R(z)P^{(\infty)}(z) \widehat{c}_{2}^{\sigma_{3}}(I+o(1)), \qquad \mbox{as } z \to 0,
\end{align*}
for a certain $\widehat{c}_{2}$ that is independent of $s_{1},\ldots,s_{m}$ and whose exact expression is unimportant for us. Therefore, we have
\begin{align}\label{exact expression for G0}
G_{0}(0;r\vec{x},\vec{s},\alpha) = r^{-\frac{\sigma_{3}}{4}} \begin{pmatrix}
1 & 0 \\
\frac{i a^{2}}{2}\sqrt{r} & 1
\end{pmatrix} R(0)P^{(\infty)}(0) \widehat{c}_{2}^{\sigma_{3}}.
\end{align}
Substituting this expression into \eqref{K 0} and simplifying, we find
\begin{align*}
K_{0} = a\sqrt{r} \Big( H_{0,21} \partial_{s_{k}}H_{0,12} - H_{0,11} \partial_{s_{k}}H_{0,22} - \partial_{s_{k}}\log D(0) \Big),
\end{align*}
where 
\begin{align*}
H_{0} := R(0)P^{(\infty)}(0)D(0)^{\sigma_{3}} = R(0)\begin{pmatrix}
1 & 0 \\ id_{1} & 1
\end{pmatrix}a^{-\frac{\sigma_{3}}{2}}N.
\end{align*}
Recall that $D(0)$ is given by \eqref{explicit value of D0} and $R(0)$ by \eqref{eq: asymp R inf s1 neq 0} (see also \eqref{eq: asymp R inf s1 neq 0 regime 3}) and \eqref{expression for R^1 s1 neq 0} with $z=0$. After a direct computation, we obtain
\begin{align}
& K_{0} = \bigg(\frac{\partial_{s_{k}}d_{1}}{2} + ia^{2} \sum_{j=n}^{m}\mathcal{I}_{j}\partial_{s_{k}}\beta_{j}\bigg)\sqrt{r} + \frac{d_{0}}{2}\partial_{s_{k}}\big( d_{1}-a^{2}d_{0} \big) - \sum_{j=n}^{m}\frac{\beta_{j}^{2}(\widetilde{\Lambda}_{j,1} + \widetilde{\Lambda}_{j,2})\partial_{s_{k}}d_{1}}{2c_{-x_{j}} x_{j}} \nonumber \\
& + \sum_{j=n}^{m} \frac{\partial_{s_{k}} \big( \big[4a^{2}-2x_{j} - ix_{j}(\widetilde{\Lambda}_{j,1}-\widetilde{\Lambda}_{j,2})\big]\beta_{j}^{2} \big)}{4c_{-x_{j}}x_{j}\sqrt{x_{j}-a^{2}}} + \bigO\Big(\frac{\log r}{\delta^{4}\sqrt{r}}\Big), \qquad \mbox{as } r \to + \infty. \label{asymptotics for K0}
\end{align}

\paragraph{Asymptotics for $K_{0}$ in the regime where $a \to 0$ and $x_{1}$ remains bounded away from $0$ as $r \to +\infty$.} 
Let us consider regime 2, where $a \to 0$ while the points $x_j$ stay bounded away from $0$ as $r \to +\infty$. In this regime, the analysis of $K_{0}$ involves the local parametrix $P^{(0)}$ rather than $P^{(\infty)}$. For $z \in \mathcal{D}_{0}$, we use \eqref{def of S s1 neq 0}, \eqref{def of R s1 neq 0 a to 0}, \eqref{def of T s1 neq 0} and \eqref{def of G_0} to conclude that
\begin{align*}
T(z) & = R(z)P^{(0)}(z) = \begin{pmatrix}
1 & 0 \\
- \frac{i a^{2}}{2}\sqrt{r} & 1
\end{pmatrix} r^{\frac{\sigma_{3}}{4}}G_{0}(rz;r\vec{x},\vec{s},\alpha)(rz)^{\frac{a\sqrt{r}}{2}\sigma_{3}} \begin{pmatrix}
1 & s_{1}h(rz) \\ 0 & 1
\end{pmatrix} e^{-\sqrt{r}g(z)\sigma_{3}},
\end{align*}
and in the same way as for \eqref{exact expression for G0}, we find
\begin{align}\label{G0 as a to 0}
G_{0}(0;r\vec{x},\vec{s},\alpha) = r^{-\frac{\sigma_{3}}{4}} \begin{pmatrix}
1 & 0 \\
\frac{i a^{2}}{2}\sqrt{r} & 1
\end{pmatrix} R(0)P^{(0)}(0) \widehat{c}_{2}^{\sigma_{3}}
\end{align}
for a certain $\widehat{c}_{2}$ that is independent of $s_{1},\ldots,s_{m}$ and whose exact expression is unimportant for us. We now find an explicit expression for $P^{(0)}(0)$. As $z \to 0$, $\im z > 0$, $z$ outside the regions $\mathfrak{I}_{\pm}$ defined in \eqref{P0 def local param} (for example, $z \to 0$ with $\arg z = \frac{\pi}{2}$), we use \eqref{P0 def local param} and \eqref{precise matrix at 0 in asymptotics of Phi Bessel near 0} to obtain
\begin{align*}
P^{(0)}(z) & = E_{0}(z)\Phi_{\mathrm{Be}}\Big(\frac{rz}{4};\alpha\Big) s_{1}^{-\frac{\sigma_{3}}{2}}e^{-\sqrt{r}f(z)\sigma_{3}}e^{-\frac{\pi i \alpha}{2}\theta(z)\sigma_{3}} \\
& = E_{0}(z)\begin{pmatrix}
\frac{1}{\Gamma(1+\alpha)} & \frac{i \Gamma(\alpha)}{2\pi} \\
\frac{i \pi}{\Gamma(\alpha)} & \frac{\Gamma(1+\alpha)}{2}
\end{pmatrix} \Big(\frac{rz}{4}\Big)^{\frac{\alpha}{2}\sigma_{3}} \begin{pmatrix}
1 & h(\frac{rz}{4}) \\ 0 & 1
\end{pmatrix} s_{1}^{-\frac{\sigma_{3}}{2}}e^{-\sqrt{r}g(z)\sigma_{3}} \\
& = E_{0}(0)\begin{pmatrix}
\frac{1}{\Gamma(1+\alpha)} & \frac{i \Gamma(\alpha)}{2\pi} \\
\frac{i \pi}{\Gamma(\alpha)} & \frac{\Gamma(1+\alpha)}{2}
\end{pmatrix} s_{1}^{-\frac{\sigma_{3}}{2}} \widehat{c}_{3}^{\sigma_{3}}(I+o(1)),
\end{align*}
where here too, $\widehat{c}_{3}$ is a function independent of $s_{1},\ldots,s_{m}$ whose exact expression is unimportant for us. Hence, using \eqref{expression for E0 at 0}, we find
\begin{align}\nonumber
P^{(0)}(0) & = E_{0}(0)\begin{pmatrix}
\frac{1}{\Gamma(1+\alpha)} & \frac{i \Gamma(\alpha)}{2\pi} \\
\frac{i \pi}{\Gamma(\alpha)} & \frac{\Gamma(1+\alpha)}{2}
\end{pmatrix} s_{1}^{-\frac{\sigma_{3}}{2}} \widehat{c}_{3}^{\sigma_{3}} 
	\\ \label{P00E00Pinfty0}
& = P^{(\infty)}(0) s_{1}^{\frac{\sigma_{3}}{2}}N^{-1} \big( \sqrt{\pi a} \, r^{\frac{1}{4}} \big)^{\sigma_{3}}\begin{pmatrix}
\frac{1}{\Gamma(1+\alpha)} & \frac{i \Gamma(\alpha)}{2\pi} \\
\frac{i \pi}{\Gamma(\alpha)} & \frac{\Gamma(1+\alpha)}{2}
\end{pmatrix} s_{1}^{-\frac{\sigma_{3}}{2}} \widehat{c}_{3}^{\sigma_{3}}.
\end{align}
Defining $H_0$ by $H_{0} := R(0)P^{(\infty)}(0)$, equations \eqref{G0 as a to 0} and \eqref{P00E00Pinfty0} imply that
$$G_{0}(0;r\vec{x},\vec{s},\alpha) = r^{-\frac{\sigma_{3}}{4}} \begin{pmatrix}
1 & 0 \\
\frac{i a^{2}}{2}\sqrt{r} & 1
\end{pmatrix} H_{0} s_{1}^{\frac{\sigma_{3}}{2}}N^{-1} \big( \sqrt{\pi a} \, r^{\frac{1}{4}} \big)^{\sigma_{3}}\begin{pmatrix}
\frac{1}{\Gamma(1+\alpha)} & \frac{i \Gamma(\alpha)}{2\pi} \\
\frac{i \pi}{\Gamma(\alpha)} & \frac{\Gamma(1+\alpha)}{2}
\end{pmatrix} s_{1}^{-\frac{\sigma_{3}}{2}} \widehat{c}_{3}^{\sigma_{3}}\widehat{c}_{2}^{\sigma_{3}}.$$
Substituting this expression for $G_0(0;r\vec{x},\vec{s},\alpha)$ into \eqref{K 0} and using the identities $\Gamma(\alpha+1) = \alpha \Gamma(\alpha)$ and $\det H_{0}\equiv 1$, we obtain
\begin{align}\label{K0 in terms of H with a to 0}
K_{0} = a\sqrt{r} \Big( H_{0,21} \partial_{s_{k}}H_{0,12} - H_{0,11} \partial_{s_{k}}H_{0,22} \Big).
\end{align}
The fact that $P^{(\infty)}(0) = \bigO(a^{-\frac{1}{2}})$ as $a \to 0$ could a priori worsen the error in the large $r$ asymptotics of $K_{0}$. However, the following exact formula, which is obtained by substituting the explicit expression \eqref{def of Pinf s1 neq 0} for $P^{(\infty)}(0)$ into \eqref{K0 in terms of H with a to 0} with $H_{0} = R(0)P^{(\infty)}(0)$, shows that this is not the case:
\begin{align*}
K_{0} = &\; \frac{i \sqrt{r}}{2} \bigg( 2i a \partial_{s_{k}} \log D(0) - i \partial_{s_{k}}d_{1} + (a^{2} - d_{1}^{2}) \Big( R(0)_{22}\partial_{s_{k}}R(0)_{12} - R(0)_{12} \partial_{s_{k}}R(0)_{22} \Big) \\
& + \Big( R(0)_{21}\partial_{s_{k}}R(0)_{11} - R(0)_{11} \partial_{s_{k}}R(0)_{21} \Big) + 2id_{1} \Big( R(0)_{21}\partial_{s_{k}}R(0)_{12} - R(0)_{11} \partial_{s_{k}}R(0)_{22} \Big) \bigg).
\end{align*}
The right-hand side of the above equation can now be expanded as $r \to + \infty$ using \eqref{expansion of R as r to inf and a to 0} and the expressions \eqref{explicit value of D0} for $D(0)$ and \eqref{expression for R^1 at 0 as a to 0} for $R^{(1)}(0)$. A calculation shows that the term $ J_{R}^{(1)}(0)$ in \eqref{expression for R^1 at 0 as a to 0} leads to a contribution to $K_0$ of $\bigO(\log(r)/\sqrt{r})$ and in the end we arrive at the following asymptotic formula which is identical to \eqref{asymptotics for K0}:
\begin{align}
 K_{0} = &\; \bigg(\frac{\partial_{s_{k}}d_{1}}{2} + ia^{2} \sum_{j=n}^{m}\mathcal{I}_{j}\partial_{s_{k}}\beta_{j}\bigg)\sqrt{r} + \frac{d_{0}}{2}\partial_{s_{k}}\big( d_{1}-a^{2}d_{0} \big) - \sum_{j=n}^{m}\frac{\beta_{j}^{2}(\widetilde{\Lambda}_{j,1} + \widetilde{\Lambda}_{j,2})\partial_{s_{k}}d_{1}}{2c_{-x_{j}} x_{j}} \nonumber \\
& + \sum_{j=n}^{m} \frac{\partial_{s_{k}} \big( \big[4a^{2}-2x_{j} - ix_{j}(\widetilde{\Lambda}_{j,1}-\widetilde{\Lambda}_{j,2})\big]\beta_{j}^{2} \big)}{4c_{-x_{j}}x_{j}\sqrt{x_{j}-a^{2}}} + \bigO\Big(\frac{\log r}{\delta^{4}\sqrt{r}}\Big), \qquad \mbox{as } r \to + \infty. \label{asymptotics for K0 as a to 0}
\end{align}

\paragraph{Asymptotics for $\partial_{\beta_{k}} \log F_{\alpha}(r\vec{x},\vec{s})$.} Recall that $c_{-x_{j}}$ is given by \eqref{expansion conformal map s1 neq 0}, and $d_{0}$ and $d_{1}$ by \eqref{d_ell in terms of beta_j s1 neq 0}. Hence, by substituting into \eqref{DIFF identity final form general case} the large $r$ asymptotics of $K_{\infty}$, $K_{-x_{j}}$, $j=1,\dots,m$ and $K_{0}$ given by \eqref{K inf asymp s1 neq 0}, \eqref{K xj part 2 asymp s1 neq 0}, \eqref{K xj part 1 asymp s1 neq 0} and \eqref{asymptotics for K0}, we find (after simplifications), for $k = 1, \dots, m$,
\begin{multline}\label{lol5 s1 neq 0}
\partial_{s_k}\log F_{\alpha}(r\vec{x},\vec{s}) = \bigg(\partial_{s_{k}}d_{1}+ ia^{2} \sum_{j=n}^{m}\mathcal{I}_{j}\partial_{s_{k}}\beta_{j}\bigg)\sqrt{r}  - \sum_{j=n}^{m} \Big( 2\beta_{j} \partial_{s_{k}} \log \Lambda_{j} + \partial_{s_{k}} (\beta_{j}^{2}) \Big) \\ + \sum_{j=n}^{m}\beta_{j} \partial_{s_{k}} \log \frac{\Gamma(1+\beta_{j})}{\Gamma(1-\beta_{j})} + \bigO\Big( \frac{\log r}{\delta^{4}\sqrt{r}} \Big), \qquad \mbox{as } r \to + \infty.
\end{multline}
Using \eqref{def Lambda_j s1 neq 0}, we also note that
\begin{equation}\label{lol4 s1 neq 0}
-\sum_{j=n}^{m} 2\beta_{j} \partial_{s_{k}} \log \Lambda_{j} = -2 \sum_{j=n}^{m} \beta_{j} \partial_{s_{k}}(\beta_{j}) \log (4(x_{j}-a^{2})c_{-x_{j}}\sqrt{r}) -2\sum_{j=n}^{m} \beta_{j} \sum_{\substack{\ell = n \\ \ell \neq j}}^{m} \partial_{s_{k}}(\beta_{\ell})\log(T_{\ell,j}).
\end{equation}
Since $\beta_{n},\ldots,\beta_{m}$ are independent of $s_{1},\ldots,s_{n-1}$, see \eqref{def of beta_j s1 neq 0}, $\partial_{s_k}\log F_{\alpha}(r\vec{x},\vec{s}) = \bigO(\log(r)/(\delta^{4}\sqrt{r}))$ as $r \to + \infty$ for each $k = 1,\ldots,n-1$.
Let us define $\widetilde{F}_{\alpha}(r \vec{x}, \vec{\beta}) = F_{\alpha}(r \vec{x},\vec{s})$, where $\vec{\beta} = (\beta_{1},\dots,\beta_{m})$.
By \eqref{def of beta_j s1 neq 0}, we have $s_k= \exp(-2\pi i \sum_{j=k}^m \beta_j)$ and hence $\partial_{\beta_k} = -2\pi i \sum_{j=1}^k s_j \partial_{s_j}$ for $k = 1, \dots, m$.
In particular, for each $k = 1,\ldots,n-1$,
\begin{align}\label{first part trivial diff identity}
\partial_{\beta_k}\log \widetilde{F}_{\alpha}(r\vec{x},\vec{\beta}) = \bigO\Big( \frac{\log r}{\delta^{4}\sqrt{r}} \Big), \qquad \mbox{as } r \to + \infty.
\end{align}
Substituting \eqref{lol4 s1 neq 0} into \eqref{lol5 s1 neq 0}, for $k \in \{n,\ldots,m\}$ we obtain
\begin{multline}\label{lol5 part 2 s1 neq 0}
\partial_{\beta_k}\log\widetilde{F}_{\alpha}(r \vec{x}, \vec{\beta}) = (\partial_{\beta_{k}}d_{1}+ ia^{2} \mathcal{I}_{k}) \sqrt{r}  -2 \sum_{j=n}^{m} \beta_{j} \partial_{\beta_{k}}(\beta_{j}) \log \Big(4 (x_{j}-a^{2})^{3/2} x_{j}^{-1} \sqrt{r}\Big) \\ -2\sum_{j=n}^{m} \beta_{j} \sum_{\substack{\ell = n \\ \ell \neq j}}^{m} \partial_{\beta_{k}}(\beta_{\ell})\log(T_{\ell,j}) - \sum_{j=n}^{m} \partial_{\beta_{k}}(\beta_{j}^{2}) + \sum_{j=n}^{m} \beta_{j} \partial_{\beta_{k}} \log \frac{\Gamma(1+\beta_{j})}{\Gamma(1-\beta_{j})} + \bigO\Big(\frac{\log r}{\delta^{4}\sqrt{r}}\Big)
\end{multline}
as $r \to + \infty$. Recalling that $d_{1}$ is given by \eqref{d_ell in terms of beta_j s1 neq 0}, $\mathcal{I}_{k}$ by \eqref{T and T tilde s1 neq 0}, and $f$ by \eqref{def of g s1 neq 0}, we note that
\begin{align*}
\partial_{\beta_{k}}d_{1}+ ia^{2} \mathcal{I}_{k} = -2i \sqrt{x_{k}-a^{2}} + ia^{2} \int_{a^{2}}^{x_{k}}\frac{du}{u\sqrt{u-a^{2}}} = -i \int_{a^{2}}^{x_{k}}\frac{\sqrt{u-a^{2}}}{u}du = 2 f_{-}(-x_{k}).
\end{align*}
Hence, for $k \in \{n,\ldots,m\}$, the asymptotics \eqref{lol5 part 2 s1 neq 0} can be rewritten as
\begin{align}\nonumber
\partial_{\beta_k}\log \widetilde{F}_{\alpha}(r \vec{x}, \vec{\beta}) =&\; 2f_{-}(-x_{k})\sqrt{r} -2 \beta_{k} \log \Big(4 (x_{k}-a^{2})^{3/2} x_{k}^{-1} \sqrt{r}\Big) 
	\\ 
& - 2 \sum_{\substack{j=n \\ j \neq k}}^{m}\beta_{j} \log(T_{k,j}) - 2\beta_{k} + \beta_{k} \partial_{\beta_{k}} \log \frac{\Gamma(1+\beta_{k})}{\Gamma(1-\beta_{k})} + \bigO\Big( \frac{\log r}{\delta^{4}\sqrt{r}} \Big). \label{DIFF IDENTITY s1 neq 0}
\end{align}
\paragraph{Asymptotics for $\log F_{\alpha}(r\vec{x},\vec{s})$.} Integrating \eqref{first part trivial diff identity} in $\beta_{1},\ldots,\beta_{n-1}$, we get
\begin{align}\label{first trivial ratio asympt}
\frac{\log \widetilde{F}_{\alpha}(r\vec{x},(\beta_{1},\ldots,\beta_{n-1},0,\ldots,0))}{\log \widetilde{F}_{\alpha}(r\vec{x},(0,\ldots,0))} = \bigO\Big( \frac{\log r}{\delta^{4}\sqrt{r}} \Big), \qquad \mbox{as } r \to + \infty.
\end{align}
We will now proceed with the successive integrations of \eqref{DIFF IDENTITY s1 neq 0} in $\beta_{n},\ldots,\beta_{m}$. We will use the notation $\vec{\beta}_{j} := (\beta_{1},\dots,\beta_{j}, 0,\ldots,0)$, $j \geq n-1$. First, we set $\beta_{n+1}=\ldots=\beta_{m}=0$ and use \eqref{DIFF IDENTITY s1 neq 0} with $k=n$. After integration in $\beta_{n}$, we get
\begin{align*}
\log \frac{\widetilde{F}_{\alpha}(r\vec{x},\vec{\beta}_{n})}{\widetilde{F}_{\alpha}(r\vec{x},\vec{\beta}_{n-1})} & = 2\beta_{n} f_{-}(-x_{n}) \sqrt{r} - \beta_{n}^{2} \log \big(4 (x_{n}-a^{2})^{3/2} x_{n}^{-1} \sqrt{r}\big) \\
&  +\log\big(G(1+\beta_{n})G(1-\beta_{n})\big) + \bigO \bigg( \frac{\log r}{\delta^{4}\sqrt{r}} \bigg), \qquad \mbox{as } r \to + \infty,
\end{align*}
where $G$ is Barnes' $G$-function, and where we have used that
\begin{equation}\label{integral of Gamma with Barnes}
\int_{0}^{\beta} x \partial_{x} \log \frac{\Gamma(1+x)}{\Gamma(1-x)}dx = \beta^{2} + \log \big(G(1+\beta)G(1-\beta)\big).
\end{equation}
Now, we set $\beta_{n+2}=\dots=\beta_{m} = 0$ and $k = n+1$ in \eqref{DIFF IDENTITY s1 neq 0}, and integrate in $\beta_{n+1}$. This gives
\begin{multline}
\log  \frac{\widetilde{F}_{\alpha}(r\vec{x},\vec{\beta}_{n+1})}{\widetilde{F}_{\alpha}(r\vec{x},\vec{\beta}_{n})} = 2\beta_{n+1} f_{-}(-x_{n+1}) \sqrt{r} - \beta_{n+1}^{2} \log \big(4 (x_{n+1}-a^{2})^{3/2} x_{n+1}^{-1} \sqrt{r}\big) \\ - 2 \beta_{n}\beta_{n+1} \log(T_{n+1,n}) +\log\big(G(1+\beta_{n+1})G(1-\beta_{n+1})\big) + \bigO \bigg( \frac{\log r}{\delta^{4}\sqrt{r}} \bigg)
\end{multline}
as $r \to + \infty$. The integrations in $\beta_{n+2},\dots,\beta_{m}$ are similar. By adding the asymptotics of $\log  \frac{\widetilde{F}_{\alpha}(r\vec{x},\vec{\beta}_{j})}{\widetilde{F}_{\alpha}(r\vec{x},\vec{\beta}_{j-1})}$ for $j=n,\ldots,m$ to \eqref{first trivial ratio asympt}, we obtain
\begin{multline}\label{asymp Ftilde ratio final}
\log \frac{\widetilde{F}_{\alpha}(r\vec{x},\vec{\beta})}{\widetilde{F}_{\alpha}(r\vec{x},\vec{0})} = \sum_{j=n}^{m} 2 \beta_{j} f_{-}(-x_{j}) \sqrt{r} - \sum_{j=n}^{m} \beta_{j}^{2} \log \big(4 (x_{j}-a^{2})^{3/2} x_{j}^{-1} \sqrt{r}\big) \\ - 2 \sum_{n \leq j < k \leq m} \beta_{j}\beta_{k} \log(T_{k,j}) + \sum_{j=n}^{m} \log\big(G(1+\beta_{j})G(1-\beta_{j})\big) + \bigO \bigg( \frac{\log r}{\delta^{4}\sqrt{r}} \bigg).
\end{multline}
Since $u_{j}=-2\pi i \beta_{j}$, $\widetilde{F}_{\alpha}(r\vec{x},\vec{\beta}) = F_{\alpha}(r \vec{x},\vec{s}) = E_{\alpha}(r\vec{x},\vec{u})$, and
\begin{align*}
\widetilde{F}_{\alpha}(r\vec{x},\vec{0}) = E_{\alpha}(r\vec{x},\vec{0}) = 1,
\end{align*}
we can rewrite \eqref{asymp Ftilde ratio final} as
\begin{multline*}
\log E_{\alpha}(r\vec{x},\vec{u}) =  \sum_{j=n}^{m} \bigg(\frac{\sqrt{r}}{\pi} \int_{a^{2}}^{x_{j}}\frac{\sqrt{u-a^{2}}}{2u}du\bigg)u_{j} + \sum_{j=n}^{m} \frac{u_{j}^{2}}{4\pi^{2}} \log \big(4 (x_{j}-a^{2})^{3/2} x_{j}^{-1} \sqrt{r}\big) \\ + \sum_{n \leq j < k \leq m} \frac{u_{j}u_{k}}{2\pi^{2}} \log\left(\frac{\sqrt{x_{j}-a^{2}}+\sqrt{x_{k}-a^{2}}}{|\sqrt{x_{j}-a^{2}}-\sqrt{x_{k}-a^{2}}|}\right) + \sum_{j=n}^{m} \log \big(G(1+\beta_{j})G(1-\beta_{j})\big) + \bigO \bigg( \frac{\log r}{\delta^{4}\sqrt{r}} \bigg),
\end{multline*}
which finishes the proof of Theorem \ref{thm:s1 neq 0}.
\section{Proof of Theorem \ref{thm:rigidity}}\label{Section: rigidity}
The proof is an adaptation of \cite[Theorem 1.2]{ChCl4} to handle varying point processes. Let $\{X_{r}\}_{r \geq 0}$ be a family of point processes satisfying Assumptions \ref{assumptions}, and let $\mathrm{N}_{r}(x)$ denote the random variable that counts the number of points of $X_{r}$ that are $\leq x$. In what follows, we let $\mathfrak{a}, \eta_{r,1},\eta_{r,2},\delta_{r}$ be the constants appearing in Assumptions \ref{assumptions}. We also write $\upxi_{r,k}$ for the $k$-th smallest point of $X_{r}$, and let $\kappa_{r,k}:= \upmu_{r}^{-1}(k)$. We divide the proof into two lemmas.

\begin{lemma}\label{lemma: A r eps}
There exist $c>0$ and $r_{0}>0$ such that for any $\epsilon>0$ sufficiently small and $r>r_{0}$,
\begin{align}\label{prob statement lemma 2.1}
\mathbb P\left(\sup_{x\in ((\eta_{r,1}+\delta_{r}) r,(\eta_{r,2}-\delta_{r}) r)} \left|\frac{\mathrm{N}_{r}(x)-\upmu_{r}(x)}{\upsigma_{r}^2(x)}\right| >  \sqrt{\frac{2}{\mathfrak{a}}(1+\epsilon)} \right)\leq \frac{c\, \upmu_{r}((\eta_{r,1}+\delta_{r}) r)^{-\epsilon}}{2\epsilon}.
\end{align}
In particular, for any $\epsilon > 0$, 
\begin{align*}
\lim_{r \to + \infty} \mathbb{P}\left(\sup_{x\in ((\eta_{r,1}+\delta_{r}) r,(\eta_{r,2}-\delta_{r}) r)} \left|\frac{\mathrm{N}_{r}(x)-\upmu_{r}(x)}{\upsigma_{r}^2(x)}\right| \leq   \sqrt{\frac{2}{\mathfrak{a}}(1+\epsilon)}\right) = 1.
\end{align*}
\end{lemma}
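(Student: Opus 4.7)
The strategy is a standard Chernoff-plus-union-bound argument, adapted to the varying setting. First, for a fixed reference point, I would apply Markov's inequality to the bound \eqref{expmomentbound}: for any $\gamma \in [-M,M]$ and any $x \in (\eta_{r,1}+\tfrac{\delta_r}{2},\eta_{r,2}-\tfrac{\delta_r}{2})$,
\begin{align*}
\mathbb{P}\bigl(\pm(\mathrm{N}_r(xr) - \upmu_r(xr)) \geq t\bigr) \leq \mathrm{C}\, e^{-\gamma t + \frac{\gamma^2}{2}\upsigma_r^2(xr)}.
\end{align*}
Choosing $\gamma = \pm\sqrt{2(1+\epsilon)/\mathfrak{a}}$, which lies in $[-M,M]$ for all sufficiently small $\epsilon$ since $M > \sqrt{2/\mathfrak{a}}$, and taking $t = \sqrt{2(1+\epsilon)/\mathfrak{a}}\,\upsigma_r^2(xr)$ yields
\begin{align*}
\mathbb{P}\Bigl(|\mathrm{N}_r(xr) - \upmu_r(xr)| \geq \sqrt{2(1+\epsilon)/\mathfrak{a}}\,\upsigma_r^2(xr)\Bigr) \leq 2\mathrm{C}\, e^{-(1+\epsilon)\upsigma_r^2(xr)/\mathfrak{a}}.
\end{align*}

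Next, I would set up a grid of classical locations. Let $K_r = \upmu_r((\eta_{r,1}+\delta_r)r)$ and $L_r = \upmu_r((\eta_{r,2}-\delta_r)r)$, and for each integer $k \in [K_r,L_r]$ define $\kappa_{r,k} = \upmu_r^{-1}(k)$. By parts (2) and (4) of Assumptions \ref{assumptions}, $\kappa_{r,k} \in [(\eta_{r,1}+\delta_r)r,(\eta_{r,2}-\delta_r)r]$, so the tail bound applies at $x = \kappa_{r,k}/r$. Using \eqref{afrak def}, we have $\upsigma_r^2(\kappa_{r,k}) = \mathfrak{a}\log k + \bigO(1)$ uniformly in $k$, so the individual tail probability is $\bigO(k^{-(1+\epsilon)})$, and a union bound over integers $k \in [K_r,L_r]$ gives
\begin{align*}
\mathbb{P}\Bigl(\exists k \in [K_r,L_r]\cap\mathbb{N}: |\mathrm{N}_r(\kappa_{r,k}) - k| > \sqrt{2(1+\epsilon')/\mathfrak{a}}\,\upsigma_r^2(\kappa_{r,k})\Bigr) \leq \frac{c\,K_r^{-\epsilon'}}{\epsilon'}
\end{align*}
for some $c>0$ and any $\epsilon' \in (0,\epsilon)$ small.

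The final step is to promote the pointwise bound at the grid points to the uniform bound over $x \in ((\eta_{r,1}+\delta_r)r,(\eta_{r,2}-\delta_r)r)$. For any such $x$, pick the unique integer $k$ with $\kappa_{r,k} \leq x < \kappa_{r,k+1}$, i.e.\ $k \leq \upmu_r(x) < k+1$. Monotonicity of $\mathrm{N}_r$ and $\upmu_r$ then give
\begin{align*}
\mathrm{N}_r(\kappa_{r,k}) - (k+1) \leq \mathrm{N}_r(x) - \upmu_r(x) \leq \mathrm{N}_r(\kappa_{r,k+1}) - k,
\end{align*}
so on the good event above, $|\mathrm{N}_r(x) - \upmu_r(x)| \leq \sqrt{2(1+\epsilon')/\mathfrak{a}}\,\upsigma_r^2(\kappa_{r,k+1}) + 1$. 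Using that $\upsigma_r^2\circ\upmu_r^{-1}$ is increasing, $\upsigma_r^2(x) \geq \upsigma_r^2(\kappa_{r,k})$, and \eqref{afrak def} with $K_r\to+\infty$ (which follows from part (2)) yields
\begin{align*}
\frac{\upsigma_r^2(\kappa_{r,k+1})}{\upsigma_r^2(\kappa_{r,k})} = \frac{\mathfrak{a}\log(k+1)+\bigO(1)}{\mathfrak{a}\log k+\bigO(1)} = 1 + \bigO(1/\log K_r), \qquad \frac{1}{\upsigma_r^2(\kappa_{r,k})} = \bigO(1/\log K_r),
\end{align*}
uniformly for $k \in [K_r,L_r]$. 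Absorbing these vanishing factors into the constant by choosing $\epsilon'<\epsilon$ and $r$ large enough so that $\sqrt{2(1+\epsilon')/\mathfrak{a}}(1+\bigO(1/\log K_r))+\bigO(1/\log K_r) \leq \sqrt{2(1+\epsilon)/\mathfrak{a}}$ completes the proof of \eqref{prob statement lemma 2.1}, and the second assertion follows by letting $r\to+\infty$.

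The main technical nuisance will be the interpolation step: keeping track of the small boundary effect where $\kappa_{r,k+1}$ may slightly exceed $(\eta_{r,2}-\delta_r)r$ (handled by extending the grid index range by one, which does not affect the asymptotics), and verifying that the uniform ratio estimate for $\upsigma_r^2$ at consecutive grid points degrades into the admissible slack $\epsilon-\epsilon'$. Everything else is standard Chernoff bookkeeping.
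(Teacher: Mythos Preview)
Your overall strategy---Chernoff bound at grid points $\kappa_{r,k}=\upmu_r^{-1}(k)$, union bound over $k$, then monotone interpolation to general $x$---is the same as the paper's. The execution differs in one respect that creates a genuine (though repairable) quantifier gap.

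The paper interpolates \emph{before} applying Markov's inequality: for $x\in[\kappa_{r,k-1},\kappa_{r,k}]$ it bounds
\[
\frac{\mathrm{N}_r(x)-\upmu_r(x)}{\upsigma_r^2(x)}\le \frac{\mathrm{N}_r(\kappa_{r,k})-k+1}{\upsigma_r^2(\kappa_{r,k-1})},
\]
and only then applies the union bound and Markov with a generic $\gamma$. The $\upsigma_r^2$-mismatch between consecutive grid points therefore appears as a multiplicative factor $e^{\gamma^2(\upsigma_r^2(\kappa_{r,k})-\upsigma_r^2(\kappa_{r,k-1}))}$ inside the probability bound, and the paper shows this factor is bounded by an absolute constant using the concavity of $\upsigma_r^2\circ\upmu_r^{-1}$ together with the limit \eqref{some limits are + inf in Assumptions}. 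Only at the very end is $\gamma=\sqrt{2(1+\epsilon)/\mathfrak{a}}$ substituted, which yields exponent $-\epsilon$ directly with $r_0$ independent of~$\epsilon$.

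In your order---Chernoff first, interpolation second---the mismatch instead multiplies the \emph{threshold}: on the good event you only get the sup bounded by $\sqrt{2(1+\epsilon')/\mathfrak{a}}\,(1+\bigO(1/\log K_r))$, forcing you to introduce an auxiliary $\epsilon'<\epsilon$. As written, this makes $r_0$ depend on $\epsilon$ (you need $1/\log K_r\lesssim \epsilon-\epsilon'$) and it delivers exponent $-\epsilon'$ rather than $-\epsilon$ in the probability bound, so \eqref{prob statement lemma 2.1} is not yet proved in the stated form. Both defects can be fixed by setting $\epsilon'=\epsilon-C/\log K_r$ so that $K_r^{-\epsilon'}=e^{\bigO(1)}K_r^{-\epsilon}$ and $1/\epsilon'\le 2/\epsilon$ whenever $\epsilon\ge 2C/\log K_r$, and by observing that for the complementary range $\epsilon< 2C/\log K_r$ the right-hand side of \eqref{prob statement lemma 2.1} exceeds $1$ (for $r$ large, independently of $\epsilon$) and is therefore trivially true. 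Your write-up does not include this step. If you want to avoid the patch altogether, swap the order of the interpolation and Chernoff steps as the paper does, and use concavity rather than \eqref{afrak def} alone to control the mismatch.
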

\begin{proof}
For each large enough $r$, $\upmu_{r}$ and $\upsigma_{r}$ are increasing by part (2) of Assumptions \ref{assumptions}, and therefore
\begin{align}\label{fracNrxmur}
\frac{\mathrm{N}_{r}(x)-\upmu_{r}(x)}{\upsigma_{r}^2(x)} \leq \frac{\mathrm{N}_{r}(\kappa_{r,k})-\upmu_{r}(\kappa_{r,k-1})}{\upsigma_{r}^2(\kappa_{r,k-1})} = \frac{\mathrm{N}_{r}(\kappa_{r,k})-\upmu_{r}(\kappa_{r,k})+1}{\upsigma_{r}^2(\kappa_{r,k-1})} , \qquad \mbox{for all } x \in [\kappa_{r,k-1},\kappa_{r,k}]
\end{align}
and for all $k\in \mathcal{K}_{r}:=\{k \in \mathbb{N}_{> 0}: \kappa_{r,k}>(\eta_{r,1}+\delta_{r})r \mbox{ and } \kappa_{r,k-1}<(\eta_{r,2}-\delta_{r}) r\}$. The definition of $\mathcal{K}_{r}$ implies in particular that
\begin{align*}
((\eta_{r,1}+\delta_{r}) r,(\eta_{r,2}-\delta_{r}) r) \subset \bigcup_{k\in \mathcal{K}_{r}} [\kappa_{r,k-1},\kappa_{r,k}].
\end{align*}
Note that $\mathcal{K}_{r}$ is finite and $\#\mathcal{K}_{r}\to + \infty$ as $r \to + \infty$ (this follows directly from Remark \ref{remark:big lebesgue measure}). Using first \eqref{fracNrxmur}, then a union bound and then Markov's inequality, we find
\begin{align}
& \mathbb P\left(\sup_{x\in ((\eta_{r,1}+\delta_{r}) r,(\eta_{r,2}-\delta_{r}) r)}\frac{\mathrm{N}_{r}(x)-\upmu_{r}(x)}{\upsigma_{r}^2(x)}>\gamma\right) \leq \mathbb P\left(\max_{k\in \mathcal{K}_{r}}\frac{\mathrm{N}_{r}(\kappa_{r,k})-\upmu_{r}(\kappa_{r,k})+1}{\upsigma_{r}^2(\kappa_{r,k-1})}>\gamma\right) \nonumber \\
& \leq \sum_{k \in \mathcal{K}_{r}} \mathbb P\left(\frac{\mathrm{N}_{r}(\kappa_{r,k})-\upmu_{r}(\kappa_{r,k})+1}{\upsigma_{r}^2(\kappa_{r,k-1})}>\gamma\right) \leq \sum_{k \in \mathcal{K}_{r}}\mathbb{E}\left[e^{\gamma \mathrm{N}_{r}(\kappa_{r,k})}\right]e^{-\gamma\upmu_{r}(\kappa_{r,k})+\gamma-\gamma^2 \upsigma_{r}^2(\kappa_{r,k-1})}, \label{Markov inequality}
\end{align}
for any $\gamma>0$. Let $k':=\max\{k:k\in\mathcal{K}_{r}\}$ and $k'':=\min\{k:k\in\mathcal{K}_{r}\}$ (note that $k'$ and $k''$ depends on $r$, even though this is not indicated in the notation). For all $k$ such that $k''\leq k<k'$, by definition of $\mathcal{K}_{r}$ we can directly use \eqref{expmomentbound} to obtain an upper bound for $\mathbb{E}[e^{\gamma \mathrm{N}_{r}(\kappa_{r,k})}]$. To also obtain an upper bound for $\mathbb{E}[e^{\gamma \mathrm{N}_{r}(\kappa_{r,k'})}]$ using \eqref{expmomentbound}, we need to show that the inequality $\kappa_{r,k'}<(\eta_{r,2}-\frac{\delta_{r}}{2})r$ holds for all sufficiently large $r$. Let us write $\kappa_{r,k'} = \kappa_{r,k'-1}+m$. We have
\begin{align}
k' & = \upmu_{r}(\upmu_{r}^{-1}(k'-1)+m) \geq k'-1 + m \inf_{\xi \in [\kappa_{r,k'-1},\kappa_{r,k'}]}  \upmu_{r}'(\xi) \nonumber \\
& \geq k'-1 + \frac{m}{\kappa_{r,k'-1}+m}\inf_{\xi \in [\kappa_{r,k'-1},\kappa_{r,k'}]} \xi\upmu_{r}'(\xi). \label{kprimeupmu}
\end{align}
Note that $\kappa_{r,k'-1} \geq (\eta_{r,1} +\delta_r)r$ for all sufficiently large $r$, because $\#\mathcal{K}_r \to +\infty$. Thus, since $\xi \mapsto \xi\upmu_{r}'(\xi)$ is non-decreasing by Assumptions \ref{assumptions}, 
\begin{align}\label{infxikappa}
\inf_{\xi \in [\kappa_{r,k'-1},\kappa_{r,k'}]} \xi\upmu_{r}'(\xi) = \kappa_{r,k'-1}\upmu_{r}'(\kappa_{r,k'-1}) \geq (\eta_{r,1}+\delta_{r})r \upmu_{r}'((\eta_{r,1}+\delta_{r})r)
\end{align}
for all large enough $r$. Also, by definition of $\mathcal{K}_{r}$, $\kappa_{r,k'-1} \leq (\eta_{r,2}-\delta_{r})r$. Hence, \eqref{kprimeupmu}--\eqref{infxikappa} imply
\begin{align}\label{lol13}
\frac{m}{\delta_{r}r} \leq \frac{1}{\delta_{r}} \frac{\eta_{r,2}-\delta_{r}}{(\eta_{r,1}+\delta_{r})r\upmu_{r}'((\eta_{r,1}+\delta_{r})r)-1} \leq \frac{2(\eta_{r,2}-\delta_{r})}{(\eta_{r,1}+\delta_{r})\delta_{r}r\upmu_{r}'((\eta_{r,1}+\delta_{r})r)}
\end{align}
for all sufficiently large $r$, and the right-hand side tends to $0$ as $r \to +\infty$ by parts (2) and (4) of Assumptions \ref{assumptions}. This shows that $\kappa_{r,k'}<(\eta_{r,2}-\frac{\delta_{r}}{2})r$ for all sufficiently large $r$. Therefore, using \eqref{expmomentbound} in \eqref{Markov inequality}, we obtain
\begin{align*}
\mathbb P\left(\sup_{x\in ((\eta_{r,1}+\delta_{r}) r,(\eta_{r,2}-\delta_{r}) r)}\frac{\mathrm{N}_{r}(x)-\upmu_{r}(x)}{\upsigma_{r}^2(x)}>\gamma\right)\leq 
\mathrm{C}
e^\gamma\sum_{k \in \mathcal{K}_{r}} e^{-\frac{\gamma^2}{2} \upsigma_{r}^2(\kappa_{r,k})}e^{\gamma^2(\upsigma_{r}^{2}(\kappa_{r,k})-\upsigma_{r}^{2}(\kappa_{r,k-1}))}.
\end{align*}
Using the assumption that $\upsigma_{r}^{2} \circ \upmu_{r}^{-1}$ is concave and increasing, we have
\begin{align}\label{lol14}
\sup_{k \in \mathcal{K}_{r}} e^{\gamma^2(\upsigma_{r}^{2}(\kappa_{r,k})-\upsigma_{r}^{2}(\kappa_{r,k-1}))} = e^{\gamma^2(\upsigma_{r}^{2}\circ \upmu_{r}^{-1}(k'')-\upsigma_{r}^{2}\circ \upmu_{r}^{-1}(k''-1))} \leq e^{\gamma^2(\upsigma_{r}^{2}\circ \upmu_{r}^{-1})'(k''-1)}.
\end{align}
On the other hand, for any $k_{1},k_{2} \in [\upmu_{r}(\eta_{r,1}r),\upmu_{r}(\eta_{r,2}r)]$ with $k_{1} \leq k_{2}$, 
\begin{align}\label{lol5}
(\upsigma_{r}^{2} \circ \upmu_{r}^{-1})'(k_{2}) \leq  \frac{(\upsigma_{r}^{2} \circ \upmu_{r}^{-1})(k_{2})- (\upsigma_{r}^{2} \circ \upmu_{r}^{-1})(k_{1})}{k_{2}-k_{1}}  \leq (\upsigma_{r}^{2} \circ \upmu_{r}^{-1})'(k_{1}).
\end{align}
Moreover, for any $b_{1},b_{2} \in [\eta_{r,1},\eta_{r,2}]$ with $b_{2}>b_{1}$, 
\begin{align}\label{b2b1 bounds}
\upmu_{r}(b_{2}r)-\upmu_{r}(b_{1}r) \geq (b_{2}-b_{1})r \inf_{x \in [b_{1}r,b_{2}r]} \upmu_{r}'(x) \geq \Big( 1-\frac{b_{1}}{b_{2}} \Big)\inf_{x\in[b_{1}r,b_{2}r]}x\upmu_{r}'(x) = \Big( 1-\frac{b_{1}}{b_{2}} \Big) b_{1}r \upmu_{r}'(b_{1}r).
\end{align}
If $b_{1},b_{2} \in (\eta_{r,1}+\frac{\delta_{r}}{2},\eta_{r,2}-\frac{\delta_{r}}{2})$ are chosen such that $1-\frac{b_{1}}{b_{2}} \geq c_{1} \delta_{r}$ for a certain $c_{1}>0$ independent of $r$, then the right-hand side of \eqref{b2b1 bounds} converges to $+\infty$ as $r\to + \infty$ by part (2) of Assumptions \ref{assumptions}.
The definition of $\mathcal{K}_{r}$ implies that $k''-1 \in (\upmu_{r}((\eta_{r,1}+\delta_{r})r)-1,\upmu_{r}((\eta_{r,1}+\delta_{r})r)]$, and therefore \eqref{b2b1 bounds} with $b_1 = (\eta_{r,1}+\frac{3\delta_{r}}{4})r$ and $b_2 = (\eta_{r,1}+\delta_{r})r$ implies $k''-1 \in (\upmu_{r}((\eta_{r,1}+\frac{3\delta_{r}}{4})r),\upmu_{r}((\eta_{r,1}+\delta_{r})r)]$ for all sufficiently large $r$. Hence, applying the first inequality in \eqref{lol5} with $k_{2}=k''-1$ and $k_{1}= \upmu_{r}((\eta_{r,1}+\frac{\delta_{r}}{2})r)$, and using parts (2) and (4) of Assumptions \ref{assumptions}, we get
\begin{align*}
& (\upsigma_{r}^2 \circ \upmu_{r}^{-1})'(k''-1) \leq  \frac{(\upsigma_{r}^{2} \circ \upmu_{r}^{-1}) (k''-1)- \upsigma_{r}^{2}((\eta_{r,1}+\frac{\delta_{r}}{2})r)}{k''-1-\upmu_{r}((\eta_{r,1}+\frac{\delta_{r}}{2})r)} \leq \frac{\upsigma_{r}^{2} ((\eta_{r,1}+\delta_{r})r)}{\upmu_{r}((\eta_{r,1}+\frac{3\delta_{r}}{4})r)-\upmu_{r}((\eta_{r,1}+\frac{\delta_{r}}{2})r)} \\
& \leq \frac{4\upsigma_{r}^{2} ((\eta_{r,1}+\delta_{r})r)}{\delta_{r}\inf_{x \in (\eta_{r,1}+\frac{\delta_{r}}{2},\eta_{r,1}+\frac{3\delta_{r}}{4})}r\upmu_{r}'(rx)} \leq \frac{\eta_{r,1}+\frac{3\delta_{r}}{4}}{\eta_{r,1}+\frac{\delta_{r}}{2}} \frac{4\upsigma_{r}^{2} ((\eta_{r,1}+\delta_{r})r)}{\delta_{r}r \upmu_{r}'((\eta_{r,1}+\frac{\delta_{r}}{2})r)} \leq C'',
\end{align*}
for all sufficiently large $r$, where $C''$ is independent of $r$. Therefore, the right-hand side of \eqref{lol14} is bounded by a constant $C'=C'(M)$ for all $r$ sufficiently large and all $\gamma\in[0,M]$, where $M>\sqrt{2/\mathfrak{a}}$ is arbitrary but fixed. Using also the fact that $\upsigma_{r}^2$ and $\upmu_{r}$ are increasing, we obtain
\begin{align}
& \mathbb P\left(\sup_{x\in ((\eta_{r,1}+\delta_{r}) r,(\eta_{r,2}-\delta_{r})r)}\frac{\mathrm{N}_{r}(x)-\upmu_{r}(x)}{\upsigma_{r}^2(x)}>\gamma\right)\leq 
\mathrm{C}C' e^\gamma \sum_{k \in \mathcal{K}_{r}} e^{-\frac{\gamma^2}{2} \upsigma_{r}^2(\upmu_{r}^{-1}(k))} \nonumber \\
&\leq \mathrm{C}C'e^\gamma\left(e^{-\frac{\gamma^2}{2} \upsigma_{r}^2((\eta_{r,1}+\delta_{r})r)}+ \int_{\upmu_{r}((\eta_{r,1}+\delta_{r})r)}^{\upmu_{r}((\eta_{r,2}-\delta_{r})r)+1} e^{-\frac{\gamma^2}{2}(\upsigma_{r}^2\circ\upmu_{r}^{-1})(k)}dk\right). \label{bound1 sup}
\end{align}
The proof for the other bound is similar (and simpler): for any $\gamma > 0$,
\begin{align}
& \mathbb P\left(\sup_{x\in ((\eta_{r,1}+\delta_{r}) r,(\eta_{r,2}-\delta_{r})r)}\frac{\upmu_{r}(x)-\mathrm{N}_{r}(x)}{\upsigma_{r}^2(x)}>\gamma\right) \leq \sum_{k \in \mathcal{K}_{r}} \mathbb P\left( \frac{\upmu_{r}(\kappa_{r,k-1})-\mathrm{N}_{r}(\kappa_{r,k-1})+1}{\upsigma_{r}^2(\kappa_{r,k-1})}>\gamma\right) \nonumber \\
& \leq \sum_{k: k+1 \in \mathcal{K}_{r}}\mathbb{E}\left[e^{-\gamma \mathrm{N}_{r}(\kappa_{r,k})}\right]e^{\gamma\upmu_{r}(\kappa_{r,k})+\gamma-\gamma^2 \upsigma_{r}^2(\kappa_{r,k})} \leq \mathrm{C} \, e^\gamma \sum_{k: k+1 \in \mathcal{K}_{r}} e^{-\frac{\gamma^2}{2} \upsigma_{r}^2(\upmu_{r}^{-1}(k))} \nonumber \\
& \leq \mathrm{C}e^\gamma\left(2e^{-\frac{\gamma^2}{2} (\upsigma_{r}^2(\upmu_{r}^{-1}(\upmu_{r}((\eta_{r,1}+\delta_{r})r)-1))}+ \int_{\upmu_{r}((\eta_{r,1}+\delta_{r})r)}^{\upmu_{r}((\eta_{r,2}-\delta_{r})r)} e^{-\frac{\gamma^2}{2}(\upsigma_{r}^2\circ\upmu_{r}^{-1})(k)}dk\right). \label{bound2 sup}
\end{align}
The inequalities \eqref{bound1 sup} and \eqref{bound2 sup} imply that
\begin{align*}
& \mathbb P\left(\sup_{x\in ((\eta_{r,1}+\delta_{r}) r,(\eta_{r,2}-\delta_{r})r)}\left|\frac{\mathrm{N}_{r}(x)-\upmu_{r}(x)}{\upsigma_{r}^2(x)}\right|>\gamma\right) \\
& \leq \mathrm{C}(C'+2)e^\gamma\left(e^{-\frac{\gamma^2}{2} (\upsigma_{r}^2\circ\upmu_{r}^{-1})(\upmu_{r}((\eta_{r,1}+\delta_{r})r)-1)}+ \int_{\upmu_{r}((\eta_{r,1}+\delta_{r})r)}^{\upmu_{r}((\eta_{r,2}-\delta_{r})r)+1} e^{-\frac{\gamma^2}{2}(\upsigma_{r}^2\circ\upmu_{r}^{-1})(k)}dk\right).
\end{align*}
The above inequality is valid for any $\gamma > 0$ but is useful only for $\gamma > \sqrt{2/\mathfrak{a}}$. Indeed, using part 3 of Assumptions \ref{assumptions}, we find
\begin{align*}
\mathbb P\left(\sup_{x\in ((\eta_{r,1}+\delta_{r}) r,(\eta_{r,2}-\delta_{r})r)}\left|\frac{\mathrm{N}_{r}(x)-\upmu_{r}(x)}{\upsigma_{r}^2(x)}\right|>\gamma\right) \leq 3\mathrm{C}(C'+2)e^M\frac{\upmu_{r}((\eta_{r,1}+\delta_{r}) r)^{1-\frac{\mathfrak{a} \gamma^2}{2}}}{\frac{\mathfrak{a} \gamma^2}{2}-1}
\end{align*}
for all sufficiently large $r$ and for all $\gamma \in (\sqrt{2/\mathfrak{a}},M]$. We obtain the claim with $c = 6\mathrm{C}(C'+2)e^M$ after choosing $\gamma = \sqrt{\frac{2}{\mathfrak{a}}(1+\epsilon)}$.
\end{proof}
\begin{lemma}\label{lemma: xk not far away from kappa k}
Let $\epsilon \in (0,1)$ and $\lambda > 1$. For all sufficiently large $r$, if the event
\begin{align}\label{event holds true}
\sup_{x\in ((\eta_{r,1}+\delta_{r}) r,(\eta_{r,2}-\delta_{r})r)}\left|\frac{\mathrm{N}_{r}(x)-\upmu_{r}(x)}{\upsigma_{r}^2(x)}\right| \leq   \sqrt{\frac{2}{\mathfrak{a}}(1+\epsilon)}
\end{align}
holds true, then
\begin{align}\label{upper and lower bound for mu xk}
\max_{k \in (\upmu_{r}((\eta_{r,1}+2\delta_{r}) r),\upmu_{r}((\eta_{r,2}-2\delta_{r})r)) \cap \mathbb{N}_{\geq 0}} \frac{|\upmu_{r}(\upxi_{r,k}) - k|}{(\upsigma_{r}^2\circ\upmu_{r}^{-1})(k)} \leq \sqrt{\frac{2}{\mathfrak{a}}(1+\lambda\epsilon)}.
\end{align}
\end{lemma}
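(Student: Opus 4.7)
The plan is to leverage the elementary identity $\mathrm{N}_{r}(\upxi_{r,k}) = k$ together with the monotonicity of $\upmu_r$ to reduce the claim to two point evaluations of the hypothesis \eqref{event holds true}. Fix $k$ in the range $(\upmu_r((\eta_{r,1}+2\delta_r)r), \upmu_r((\eta_{r,2}-2\delta_r)r))\cap\mathbb{N}_{\geq 0}$, write $\kappa := \upmu_r^{-1}(k)$, and set $A_0 := \sqrt{2(1+\epsilon)/\mathfrak{a}}$ and $A := \sqrt{2(1+\lambda\epsilon)/\mathfrak{a}}$. Since $\lambda > 1$ we have $A > A_0$, so $c_{\epsilon,\lambda} := (A - A_0)/(A A_0)$ is a strictly positive constant depending only on $\mathfrak{a},\epsilon,\lambda$. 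I would then define the two critical points $x_\pm := \upmu_r^{-1}(k \pm A\upsigma_r^2(\kappa))$ and deduce the desired estimate \eqref{upper and lower bound for mu xk} by separately establishing $\upxi_{r,k}\leq x_+$ and $\upxi_{r,k}\geq x_-$.

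For the upper inclusion, assuming $x_+\in ((\eta_{r,1}+\delta_r)r,(\eta_{r,2}-\delta_r)r)$, applying \eqref{event holds true} at $x = x_+$ gives
\[\mathrm{N}_r(x_+) \geq \upmu_r(x_+) - A_0\upsigma_r^2(x_+) = k + A\upsigma_r^2(\kappa) - A_0\upsigma_r^2(x_+),\]
so $\upxi_{r,k}\leq x_+$ will follow as soon as $A_0\upsigma_r^2(x_+) \leq A\upsigma_r^2(\kappa)$. Writing $f := \upsigma_r^2\circ\upmu_r^{-1}$, the concavity of $f$ from part (3) of Assumptions \ref{assumptions} yields $f(k + Af(k)) \leq f(k)(1 + Af'(k))$, so it suffices to show $f'(k) \leq c_{\epsilon,\lambda}$ for all sufficiently large $r$. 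The lower inclusion is simpler: since $\upsigma_r^2$ is increasing and $x_- \leq \kappa$, the inequality $A_0\upsigma_r^2(x_-) < A\upsigma_r^2(\kappa)$ is automatic, so \eqref{event holds true} at $x = x_-$ yields $\mathrm{N}_r(x_-) \leq \upmu_r(x_-) + A_0\upsigma_r^2(x_-) < k$, whence $\upxi_{r,k} > x_-$.

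Two residual tasks remain: verifying that $x_\pm$ lie in the range $((\eta_{r,1}+\delta_r)r,(\eta_{r,2}-\delta_r)r)$, and verifying $f'(k) \leq c_{\epsilon,\lambda}$ for all sufficiently large $r$. Both rest on the same qualitative mechanism: by \eqref{afrak def}, $A\upsigma_r^2(\kappa) = \mathfrak{a}A\log k + O(1)$ is essentially of logarithmic size, whereas differences of $\upmu_r$ across sub-intervals of length $\asymp\delta_r$ inside $[(\eta_{r,1}+\delta_r)r,(\eta_{r,2}-\delta_r)r]$ diverge much faster than $\upsigma_r^2$ on account of \eqref{some limits are + inf in Assumptions}. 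A calculation along the lines of \eqref{b2b1 bounds} then places $x_\pm$ in the required range. For the derivative estimate, concavity gives $f'(k) \leq (f(k) - f(k_0))/(k - k_0)$ for any $k_0 < k$, and choosing $k_0 := \upmu_r((\eta_{r,1}+\tfrac{3\delta_r}{2})r)$, combined with the MVT bound $k - k_0 \geq \tfrac{\delta_r}{2}r\inf_\xi\upmu_r'(\xi r)$ and the boundedness of $f$ on the relevant range, forces the ratio to zero via \eqref{some limits are + inf in Assumptions}. The main obstacle is organizing this last estimate carefully and ensuring that all implicit constants are uniform; once this is done, the three conditions simultaneously hold for $r$ large enough depending on $\epsilon$ and $\lambda$, which completes the proof.
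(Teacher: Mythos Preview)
Your argument is correct and uses the same analytic ingredients as the paper --- concavity of $\upsigma_r^2\circ\upmu_r^{-1}$, the divergence in \eqref{some limits are + inf in Assumptions}, and the identity $\mathrm{N}_r(\upxi_{r,k})=k$ --- but organizes them slightly differently. The paper first shows by contradiction that $\upxi_{r,k}\in((\eta_{r,1}+\delta_r)r,(\eta_{r,2}-\delta_r)r)$ and then (by reference to \cite{ChCl4}) evaluates the hypothesis \eqref{event holds true} at the \emph{random} point $x=\upxi_{r,k}$, subsequently comparing $\upsigma_r^2(\upxi_{r,k})$ to $(\upsigma_r^2\circ\upmu_r^{-1})(k)$ via concavity; you instead evaluate \eqref{event holds true} at the \emph{deterministic} points $x_\pm=\upmu_r^{-1}(k\pm A\upsigma_r^2(\kappa))$ and sandwich $\upxi_{r,k}$ between them. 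Your route is arguably cleaner in that it decouples the localization step (showing $x_\pm$ lie in the good range) from the variance comparison (the bound $f'(k)\leq c_{\epsilon,\lambda}$), whereas the paper's route requires first a rough localization of $\upxi_{r,k}$ and then a bootstrap. Both ultimately reduce to the same derivative estimate on $f=\upsigma_r^2\circ\upmu_r^{-1}$, and your sketch of that estimate via the secant inequality and \eqref{some limits are + inf in Assumptions} is correct.
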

\begin{proof}
First, we observe that if $\upxi_{r,k} \leq (\eta_{r,1}+\delta_{r}) r<(\eta_{r,1}+2\delta_{r}) r \leq \kappa_{r,k}$, then
\begin{align*}
\upmu_{r}((\eta_{r,1}+2\delta_{r}) r) \leq  \upmu_{r}(\kappa_{r,k})=k=\mathrm{N}_{r}(\upxi_{r,k}) \leq \mathrm{N}_{r}((\eta_{r,1}+\delta_{r}) r),
\end{align*}
and hence, using Assumptions \ref{assumptions}, 
\begin{align*}
& \frac{\mathrm{N}_{r}((\eta_{r,1}+\delta_{r}) r)-\upmu_{r}((\eta_{r,1}+\delta_{r}) r)}{\upsigma_{r}^2((\eta_{r,1}+\delta_{r}) r)}\geq \frac{\upmu_{r}((\eta_{r,1}+2\delta_{r}) r)-\upmu_{r}((\eta_{r,1}+\delta_{r}) r)}{\upsigma_{r}^2((\eta_{r,1}+\delta_{r}) r)} \\
& \geq \frac{\delta_{r} \inf_{x \in (\eta_{r,1}+\delta_{r},\eta_{r,1}+2\delta_{r})}r\upmu_{r}'(xr)}{\upsigma_{r}^2((\eta_{r,1}+\delta_{r}) r)}  \geq \frac{\eta_{r,1}+\delta_{r}}{\eta_{r,1}+2\delta_{r}}\frac{\delta_{r} r \upmu_{r}'((\eta_{r,1}+\delta_{r}) r)}{\upsigma_{r}^2((\eta_{r,1}+\delta_{r}) r)},
\end{align*}
and since the right-hand side tends to $+\infty$ as $r\to +\infty$, this contradicts \eqref{event holds true} if $r$ is large enough. Similarly, if $\upxi_{r,k}\geq (\eta_{r,2}-\delta_{r}) r >(\eta_{r,2}-2\delta_{r}) r \geq \kappa_{r,k}$, then 
\begin{align*}
\upmu_{r}((\eta_{r,2}-2\delta_{r})r) \geq  \upmu_{r}(\kappa_{r,k})=k=\mathrm{N}_{r}(\upxi_{r,k}) \geq \mathrm{N}_{r}((\eta_{r,2}-\delta_{r})r),
\end{align*}
and we find
\begin{align*}
& \frac{\upmu_{r}((\eta_{r,2}-\delta_{r})r)-\mathrm{N}_{r}((\eta_{r,2}-\delta_{r})r)}{\upsigma_{r}^2((\eta_{r,2}-\delta_{r})r)}\geq \frac{\upmu_{r}((\eta_{r,2}-\delta_{r})r)-\upmu_{r}((\eta_{r,2}-2\delta_{r})r)}{\upsigma_{r}^2((\eta_{r,2}-\delta_{r})r)}  \nonumber \\
& \geq \frac{\delta_{r}  \inf_{\xi \in (\eta_{r,2}-2\delta_{r},\eta_{r,2}-\delta_{r})}r\upmu_{r}'(r\xi)}{\sigma^2((\eta_{r,2}-\delta_{r})r)} \geq \frac{\eta_{r,2}-2\delta_{r}}{\eta_{r,2}-\delta_{r}}\frac{\delta_{r} r \upmu_{r}'((\eta_{r,2}-2\delta_{r})r)}{\upsigma_{r}^2((\eta_{r,2}-\delta_{r})r)}.
\end{align*}
The right-hand side tends to $+\infty$ as $r \to + \infty$ by Assumptions \ref{assumptions}, so this also contradicts \eqref{event holds true} if $r$ is large enough. Therefore, we have shown that $\upxi_{r,k} \in ((\eta_{r,1}+\delta_{r}) r, (\eta_{r,2}-\delta_{r})r)$ for all $k \in (\upmu_{r}((\eta_{r,1}+2\delta_{r}) r),\upmu_{r}((\eta_{r,2}-2\delta_{r})r))$, provided that $r$ is large enough. The rest of the proof follows as in \cite[proof of Lemma 2.2, eq (2.9) and below]{ChCl4}. In \cite{ChCl4}, the parameter $\lambda$ is equal to $2$. We consider here the sharper situation $\lambda>1$, but this improvement can be obtained without modifying the idea of the proof of \cite{ChCl4}, so we omit the details here.
\end{proof}
\begin{proof}[Proof of Theorem \ref{thm:rigidity}]
Let $\lambda> 1$. By Lemma \ref{lemma: A r eps}, there exist $c>0$ and $r_{0}>0$ such that
\begin{align*}
\mathbb{P}(A) \geq 1- \frac{c\, \upmu_{r}((\eta_{r,1}+\delta_{r}) r)^{-\frac{\epsilon}{\lambda}}}{\epsilon}, \qquad \mbox{for any } r>r_{0} \mbox{ and any small } \epsilon >0,
\end{align*}
and Lemma \ref{lemma: xk not far away from kappa k} implies that $A \subset B$, where $A$ and $B$ are the events
\begin{align*}
& A = \bigg\{\sup_{x\in ((\eta_{r,1}+\delta_{r}) r,(\eta_{r,2}-\delta_{r})r)}\left|\frac{\mathrm{N}_{r}(x)-\upmu_{r}(x)}{\upsigma_{r}^2(x)}\right| \leq   \sqrt{\frac{2}{\mathfrak{a}}\Big(1+\frac{\epsilon}{\lambda}\Big)}\bigg\}, \\
& B = \bigg\{\max_{k \in (\upmu_{r}((\eta_{r,1}+2\delta_{r}) r),\upmu_{r}((\eta_{r,2}-2\delta_{r})r))\cap \mathbb{N}_{\geq 0}} \frac{|\upmu_{r}(\upxi_{r,k}) - k|}{(\upsigma_{r}^2\circ\upmu_{r}^{-1})(k)} \leq \sqrt{\frac{2}{\mathfrak{a}}(1+\epsilon)}\bigg\}.
\end{align*} 
Hence $\mathbb{P}(B) \geq \mathbb{P}(A) \geq  1- \frac{c\, \upmu_{r}((\eta_{r,1}+\delta_{r}) r)^{-\frac{\epsilon}{\lambda}}}{\epsilon}$, which finishes the proof.
\end{proof}

\appendix

\section{Model RH problems}\label{Section:Appendix}
\subsection{Airy model RH problem}\label{subsec:Airy}
The following RH problem was introduced in \cite{DKMVZ1}, and its unique solution can be explicitly written in terms of Airy functions. 
\begin{itemize}
\item[(a)] $\Phi_{\mathrm{Ai}} : \mathbb{C} \setminus \Sigma_{\mathrm{Ai}} \rightarrow \mathbb{C}^{2 \times 2}$ is analytic, where $\Sigma_{\mathrm{Ai}}$ is shown in Figure \ref{figAiry} (left).
\item[(b)] $\Phi_{\mathrm{Ai}}$ has the jump relations
\begin{equation}\label{jumps P3}
\begin{array}{l l}
\Phi_{\mathrm{Ai},+}(z) = \Phi_{\mathrm{Ai},-}(z) \begin{pmatrix}
0 & 1 \\ -1 & 0
\end{pmatrix}, & \mbox{ on } \mathbb{R}^{-}, \\

\Phi_{\mathrm{Ai},+}(z) = \Phi_{\mathrm{Ai},-}(z) \begin{pmatrix}
 1 & 1 \\
 0 & 1
\end{pmatrix}, & \mbox{ on } \mathbb{R}^{+}, \\

\Phi_{\mathrm{Ai},+}(z) = \Phi_{\mathrm{Ai},-}(z) \begin{pmatrix}
 1 & 0  \\ 1 & 1
\end{pmatrix}, & \mbox{ on } e^{ \frac{2\pi i}{3} }  \mathbb{R}^{+} , \\

\Phi_{\mathrm{Ai},+}(z) = \Phi_{\mathrm{Ai},-}(z) \begin{pmatrix}
 1 & 0  \\ 1 & 1
\end{pmatrix}, & \mbox{ on }e^{ -\frac{2\pi i}{3} }\mathbb{R}^{+} . \\
\end{array}
\end{equation}
\item[(c)] As $z \to \infty$, $z \notin \Sigma_{\mathrm{Ai}}$, we have
\begin{equation}\label{Asymptotics Airy}
\Phi_{\mathrm{Ai}}(z) = z^{-\frac{\sigma_{3}}{4}}N \left( I + \frac{\Phi_{\mathrm{Ai,1}}}{z^{3/2}} + \bigO(z^{-3}) \right) e^{-\frac{2}{3}z^{3/2}\sigma_{3}},
\end{equation}
where
$$N = \frac{1}{\sqrt{2}} \begin{pmatrix} 1 & i \\ i & 1 \end{pmatrix}, \qquad \Phi_{\mathrm{Ai,1}} = \frac{1}{8}\begin{pmatrix}
\frac{1}{6} & i \\ i & -\frac{1}{6}
\end{pmatrix}.$$
As $z \to 0$, we have
\begin{equation}
\Phi_{\mathrm{Ai}}(z) = \bigO(1).
\end{equation} 
\end{itemize}

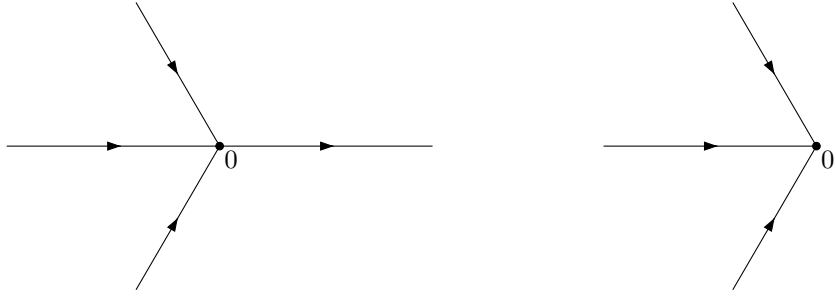
\begin{figure}[t]
\begin{center}
\begin{tikzpicture}
\node at (0.15,-0.18) {$0$};
\draw[fill] (0,0) circle (0.05);
\draw (-2.8,0)--(0,0);
\draw (0,0)--(120:2.2); \draw (0,0)--(-120:2.2);
\draw (0,0)--(2.8,0);
\draw[black,arrows={-Triangle[length=0.18cm,width=0.12cm]}]
(-1.3,0) --  ++(0:0.001);
\draw[black,arrows={-Triangle[length=0.18cm,width=0.12cm]}]
(120:1.1) --  ++(-60:0.001);
\draw[black,arrows={-Triangle[length=0.18cm,width=0.12cm]}]
(-120:1.1) --  ++(60:0.001);
\draw[black,arrows={-Triangle[length=0.18cm,width=0.12cm]}]
(0:1.5) --  ++(0:0.001);
\end{tikzpicture} \hspace{2cm}
\begin{tikzpicture}
\node at (0.15,-0.18) {$0$};
\draw[fill] (0,0) circle (0.05);
\draw (-2.8,0)--(0,0);
\draw (0,0)--(120:2.2); \draw (0,0)--(-120:2.2);
\draw[black,arrows={-Triangle[length=0.18cm,width=0.12cm]}]
(-1.3,0) --  ++(0:0.001);
\draw[black,arrows={-Triangle[length=0.18cm,width=0.12cm]}]
(120:1.1) --  ++(-60:0.001);
\draw[black,arrows={-Triangle[length=0.18cm,width=0.12cm]}]
(-120:1.1) --  ++(60:0.001);
\end{tikzpicture}
    \caption{\label{figAiry}Left: the jump contour $\Sigma_{\mathrm{Ai}}$ for $\Phi_{\mathrm{Ai}}$. Right: the jump contour $\Sigma_{\mathrm{Be}}$ for $\Phi_{\mathrm{Be}}$.}
\end{center}
\end{figure}

\subsection{Bessel model RH problem}\label{ApB}
Given $\alpha > -1$, we define
\begin{equation}\label{Phi explicit}
\Phi_{\mathrm{Be}}(z;\alpha)=
\begin{cases}
\begin{pmatrix}
I_{\alpha}(2z^{\frac{1}{2}}) & \frac{ i}{\pi} K_{\alpha}(2z^{\frac{1}{2}}) \\
2\pi i z^{\frac{1}{2}} I_{\alpha}^{\prime}(2 z^{\frac{1}{2}}) & -2 z^{\frac{1}{2}} K_{\alpha}^{\prime}(2 z^{\frac{1}{2}})
\end{pmatrix}, & |\arg z | < \frac{2\pi}{3}, \\

\begin{pmatrix}
\frac{1}{2} H_{\alpha}^{(1)}(2(-z)^{\frac{1}{2}}) & \frac{1}{2} H_{\alpha}^{(2)}(2(-z)^{\frac{1}{2}}) \\
\pi z^{\frac{1}{2}} \big( H_{\alpha}^{(1)} \big)^{\prime} (2(-z)^{\frac{1}{2}}) & \pi z^{\frac{1}{2}} \big( H_{\alpha}^{(2)} \big)^{\prime} (2(-z)^{\frac{1}{2}})
\end{pmatrix}e^{\frac{\pi i \alpha}{2}\sigma_{3}}, & \frac{2\pi}{3} < \arg z < \pi, \\

\begin{pmatrix}
\frac{1}{2} H_{\alpha}^{(2)}(2(-z)^{\frac{1}{2}}) & -\frac{1}{2} H_{\alpha}^{(1)}(2(-z)^{\frac{1}{2}}) \\
-\pi z^{\frac{1}{2}} \big( H_{\alpha}^{(2)} \big)^{\prime} (2(-z)^{\frac{1}{2}}) & \pi z^{\frac{1}{2}} \big( H_{\alpha}^{(1)} \big)^{\prime} (2(-z)^{\frac{1}{2}})
\end{pmatrix}e^{-\frac{\pi i \alpha}{2}\sigma_{3}}, & -\pi < \arg z < -\frac{2\pi}{3},
\end{cases}
\end{equation}
where $H_{\alpha}^{(1)}$ and $H_{\alpha}^{(2)}$ are the Hankel functions of the first and second kind, and $I_\alpha$ and $K_\alpha$ are the modified Bessel functions of the first and second kind.

\medskip \noindent The matrix-valued function $\Phi_{\mathrm{Be}}$ was first considered in \cite{DIZ} for $\alpha=0$ and in \cite{KMcLVAV} for general $\alpha >-1$. For fixed $\alpha$, it is known that $\Phi_{\mathrm{Be}}$ enjoys the following properties
\begin{itemize}
\item[(a)] $\Phi_{\mathrm{Be}} : \mathbb{C} \setminus \Sigma_{\mathrm{Be}} \to \mathbb{C}^{2\times 2}$ is analytic, where
$\Sigma_{\mathrm{Be}}$ is shown in Figure \ref{figAiry} (right).
\item[(b)] $\Phi_{\mathrm{Be}}$ satisfies the jump conditions
\begin{equation}\label{Jump for P_Be}
\begin{array}{l l} 
\Phi_{\mathrm{Be},+}(z) = \Phi_{\mathrm{Be},-}(z) \begin{pmatrix}
0 & 1 \\ -1 & 0
\end{pmatrix}, & z \in \mathbb{R}^{-}, \\

\Phi_{\mathrm{Be},+}(z) = \Phi_{\mathrm{Be},-}(z) \begin{pmatrix}
1 & 0 \\ e^{\pi i \alpha} & 1
\end{pmatrix}, & z \in e^{ \frac{2\pi i}{3} }  \mathbb{R}^{+}, \\

\Phi_{\mathrm{Be},+}(z) = \Phi_{\mathrm{Be},-}(z) \begin{pmatrix}
1 & 0 \\ e^{-\pi i \alpha} & 1
\end{pmatrix}, & z \in e^{ -\frac{2\pi i}{3} }  \mathbb{R}^{+}. \\
\end{array}
\end{equation}
\item[(c)] As $z \to \infty$, $z \notin \Sigma_{\mathrm{Be}}$, we have
\begin{equation}\label{large z asymptotics Bessel}
\Phi_{\mathrm{Be}}(z;\alpha) = ( 2\pi z^{\frac{1}{2}} )^{-\frac{\sigma_{3}}{2}}N
\left(I+\sum_{k=1}^{\infty} \widetilde{\Phi}_{\mathrm{Be},k}(\alpha) z^{-k/2}\right) e^{2z^{\frac{1}{2}}\sigma_{3}},
\end{equation}
for certain matrices $\widetilde{\Phi}_{\mathrm{Be},k}(\alpha)$ independent of $z$.
\item[(d)] As $z$ tends to 0, 
\begin{equation}\label{local behaviour near 0 of P_Be}
\begin{array}{l l}
\displaystyle \Phi_{\mathrm{Be}}(z;\alpha) = \left\{ \begin{array}{l l}
\begin{pmatrix}
\bigO(1) & \bigO(1) \\
\bigO(1) & \bigO(1) 
\end{pmatrix}z^{\frac{\alpha}{2}\sigma_{3}}, & |\arg z | < \frac{2\pi}{3}, \\
\begin{pmatrix}
\bigO(z^{-\frac{\alpha}{2}}) & \bigO(z^{-\frac{\alpha}{2}}) \\
\bigO(z^{-\frac{\alpha}{2}}) & \bigO(z^{-\frac{\alpha}{2}}) 
\end{pmatrix}, & \frac{2\pi}{3}<|\arg z | < \pi,
\end{array} \right.  & \displaystyle \mbox{ if } \alpha > 0.
\end{array}
\end{equation}
\end{itemize}
Also, in \cite[eqs (7.5) and (7.6)]{CharlierBessel} it was shown that, for $z$ in a neighborhood of $0$,
\begin{equation}\label{precise asymptotics of Phi Bessel near 0}
\Phi_{\mathrm{Be}}(z;\alpha) = \Phi_{\mathrm{Be},0}(z;\alpha)z^{\frac{\alpha}{2}\sigma_{3}}\begin{pmatrix}
1 & h(z) \\ 0 & 1
\end{pmatrix} H_{0}(z),
\end{equation}
where $H_{0}$ is given by \eqref{def of H}, $h$ by \eqref{def of h}, and $\Phi_{\mathrm{Be},0}$ is analytic in a neighborhood of $0$ and satisfies
\begin{equation}\label{precise matrix at 0 in asymptotics of Phi Bessel near 0}
\Phi_{\mathrm{Be},0}(0;\alpha) = \left\{ 
\begin{array}{l l}
\begin{pmatrix}
\frac{1}{\Gamma(1+\alpha)} & \frac{i \Gamma(\alpha)}{2\pi} \\
\frac{i \pi}{\Gamma(\alpha)} & \frac{\Gamma(1+\alpha)}{2}
\end{pmatrix}, & \mbox{if } \alpha \neq 0, \\[0.5cm]
\begin{pmatrix}
1 & \frac{\gamma_{\mathrm{E}}}{\pi i} \\ 0 & 1
\end{pmatrix}, & \mbox{if } \alpha = 0,
\end{array} \right.
\end{equation}
where $\gamma_{\mathrm{E}}$ is Euler's gamma constant.

We emphasize that the asymptotics \eqref{large z asymptotics Bessel} are valid only as $z \to \infty$ while $\alpha$ remains in a compact subset of $(-1,+\infty)$. Our task in the rest of this section is to find uniform asymptotics for $\Phi_{\mathrm{Be}}(z,\alpha)$ as $z \to \infty$ and simultaneously $\alpha \to +\infty$. The uniform asymptotics of $I_{\alpha}(\alpha z)$ and $K_{\alpha}(\alpha z)$ as $z\to \infty$, $\re z > 0$, and $\alpha \to + \infty$ are given in \cite[Chapter 10, equations (7.18)--(7.19)]{Olver}. As $z \to \infty$ in the right half-plane $\re z > 0$ and simultaneously $\alpha \to +\infty$, we have 
\begin{align}
& I_{\alpha}(\alpha z) = \frac{e^{\alpha \xi(z)}}{(2\pi \alpha)^{1/2}(1+z^{2})^{1/4}}\bigg( 1 + \frac{3p(z)-5p(z)^{3}}{24\alpha} + \bigO\Big(\frac{1}{(\alpha z)^{2}}\Big) \bigg), \label{Ia large} \\
& K_{\alpha}(\alpha z) = \Big( \frac{\pi}{2\alpha} \Big)^{1/2} \frac{e^{-\alpha \xi(z)}}{(1+z^{2})^{1/4}} \bigg( 1- \frac{3p(z)-5p(z)^{3}}{24\alpha} + \bigO\Big(\frac{1}{(\alpha z)^{2}}\Big) \bigg), \label{Ka large}
\end{align}
where the error terms are uniform for $|\arg z| \leq \frac{\pi}{2} - \delta$ for each fixed $\delta > 0$ and
\begin{align}\label{def of p and xi}
p(z) = \frac{1}{\sqrt{1+z^{2}}}, \qquad \xi(z) = \sqrt{1+z^{2}} + \log \frac{z}{1+\sqrt{1+z^{2}}}.
\end{align}
In \eqref{def of p and xi}, the principal branch is used for the square roots and the logarithm.
To evaluate the asymptotics of $I_{\alpha}'(\alpha z)$ and $K_{\alpha}'(\alpha z)$, we use \eqref{Ia large} and \eqref{Ka large} together with
\begin{align*}
& I_{\alpha}'(\alpha z) = \frac{I_{\alpha+1}(\alpha z)+I_{\alpha-1}(\alpha z)}{2}, \qquad K_{\alpha}'(\alpha z) = -K_{\alpha-1}(\alpha z)- \frac{1}{z}K_{\alpha}(\alpha z), \\
& e^{(\alpha+1)\xi(\frac{\alpha z}{\alpha +1})} = e^{\alpha \xi(z)} \frac{z}{1+\sqrt{1+z^{2}}} \bigg( 1-\frac{p(z)}{2\alpha} + \bigO\Big( \frac{1}{(\alpha z)^{2}} \Big) \bigg), \\
& e^{(\alpha-1)\xi(\frac{\alpha z}{\alpha -1})} = e^{\alpha \xi(z)} \frac{1+\sqrt{1+z^{2}}}{z} \bigg( 1-\frac{p(z)}{2\alpha} + \bigO\Big( \frac{1}{(\alpha z)^{2}} \Big) \bigg).
\end{align*}
As $z \to \infty$, $\re z > 0$, and simultaneously $\alpha \to +\infty$, we obtain
\begin{align}
& I_{\alpha}'(\alpha z) = \frac{e^{\alpha \xi(z)}(1+z^{2})^{1/4}}{(2\pi \alpha)^{1/2} z} \bigg( 1 + \frac{-9p(z)+7p(z)^{3}}{24\alpha} + \bigO\Big(\frac{1}{(\alpha z)^{2}}\Big) \bigg), \label{Ia der large} \\
& K_{\alpha}'(\alpha z) = \Big( \frac{\pi}{2\alpha} \Big)^{1/2} \frac{e^{-\alpha \xi(z)}(1+z^{2})^{1/4}}{-z}\bigg( 1 + \frac{9p(z)-7p(z)^{3}}{24\alpha} + \bigO\Big(\frac{1}{(\alpha z)^{2}}\Big) \bigg). \label{Ka der large}
\end{align}
Setting $z \mapsto 2\sqrt{z}\alpha^{-1}$ in \eqref{Ia large}--\eqref{Ka der large}, we obtain, as $\alpha \to +\infty$ and $\sqrt{z}\alpha^{-1} \to \infty$ uniformly for $|\arg z | < \frac{2\pi}{3}$,
\begin{align}\label{asymp for Phi Be with large alpha}
\Phi_{\mathrm{Be}}(z; \alpha) = \big( \sqrt{\pi}(\alpha^{2}+4z)^{\frac{1}{4}} \big)^{-\sigma_{3}}N \bigg( I+ \Phi_{\mathrm{Be},1}(z;\alpha)z^{-1/2} + \bigO(z^{-1}) \bigg)e^{\alpha \xi(\frac{2\sqrt{z}}{\alpha})\sigma_{3}},
\end{align}
where $\Phi_{\mathrm{Be},1}(z;\alpha)$ is uniformly bounded and given by
\begin{align}\label{def of PhiBe1}
\Phi_{\mathrm{Be},1}(z;\alpha) = \sqrt{z} \begin{pmatrix}
\frac{-3p(\frac{2\sqrt{z}}{\alpha})+p(\frac{2\sqrt{z}}{\alpha})^{3}}{24\alpha} & \frac{i(-p(\frac{2\sqrt{z}}{\alpha})+p(\frac{2\sqrt{z}}{\alpha})^{3})}{4\alpha} \\
\frac{i(-p(\frac{2\sqrt{z}}{\alpha})+p(\frac{2\sqrt{z}}{\alpha})^{3})}{4\alpha} & \frac{3p(\frac{2\sqrt{z}}{\alpha})-p(\frac{2\sqrt{z}}{\alpha})^{3}}{24\alpha}
\end{pmatrix}.
\end{align}
In fact, the asymptotic formula \eqref{asymp for Phi Be with large alpha} is valid as $\alpha \to +\infty$ and $\sqrt{z}\alpha^{-1} \to \infty$ uniformly for all values of $\arg z$ (not just in the sector $|\arg z | < \frac{2\pi}{3}$).
To see this, we employ the identities (see \cite[eq (10.27.8)]{NIST})
\begin{align*}
& H_\alpha^{(1)}(z) = \frac{2}{\pi}e^{-\frac{\pi i}{2}(\alpha+1)} K_\alpha(-iz), & & -\frac{\pi}{2} < \arg(z) \leq \pi,
	\\
& H_\alpha^{(2)}(z) = \frac{2}{\pi} e^{\frac{\pi i}{2}(\alpha+1)}  K_\alpha(iz), & & -\pi < \arg(z) \leq \frac{\pi}{2},
\end{align*}
to write 
\begin{equation}\label{Phi explicit2}
\Phi_{\mathrm{Be}}(z; \alpha)=
\begin{cases}
\begin{pmatrix}
-\frac{i}{\pi} K_{\alpha}(-i2(-z)^{\frac{1}{2}}) & \frac{i}{\pi}K_{\alpha}(i2(-z)^{\frac{1}{2}}) \\
-2 z^{\frac{1}{2}}  K_{\alpha}^{\prime} (-i2(-z)^{\frac{1}{2}}) & -2 z^{\frac{1}{2}} K_{\alpha}^{\prime} (i2(-z)^{\frac{1}{2}})
\end{pmatrix}, & \frac{2\pi}{3} < \arg z < \pi, \\

\begin{pmatrix}
\frac{i}{\pi} K_{\alpha}(i2(-z)^{\frac{1}{2}}) & \frac{i}{\pi} K_{\alpha}(-i2(-z)^{\frac{1}{2}}) \\
2 z^{\frac{1}{2}} K_{\alpha}^{\prime} (i2(-z)^{\frac{1}{2}}) & -2 z^{\frac{1}{2}} K_{\alpha}^{\prime} (-i2(-z)^{\frac{1}{2}})
\end{pmatrix}, & -\pi < \arg z < -\frac{2\pi}{3}.
\end{cases}
\end{equation}
We now observe that the asymptotic formulas \eqref{Ka large} and \eqref{Ka der large} 
for $K_\alpha(\alpha z)$ and $K_\alpha'(\alpha z)$ are in fact valid uniformly for all $|\arg z| \leq \pi$ as $z \to \infty$ and $\alpha \to +\infty$, provided that the functions $(1+z^{2})^{1/4}$, $p(z)$ and $\xi(z)$ are analytically continued in the natural way as $z$ crosses the imaginary axis, see \cite[Chapter 10, Section 8.2]{Olver}. (The formulas \eqref{Ia large} and \eqref{Ia der large} cannot be analytically continued in the same way.)
Substituting the asymptotics \eqref{Ka large} and \eqref{Ka der large} into \eqref{Phi explicit2}, we conclude after a long calculation that \eqref{asymp for Phi Be with large alpha} holds as $\alpha \to +\infty$ and $\sqrt{z}\alpha^{-1} \to \infty$ uniformly also for $\frac{2\pi}{3} < \arg z < \pi$ and $-\pi < \arg z < -\frac{2\pi}{3}$.

\subsection{Confluent hypergeometric model RH problem}\label{subsection: model RHP with HG functions}
The following RH problem depends on a parameter $\beta \in i \mathbb{R}$ and was introduced in \cite{ItsKrasovsky}. Its unique solution can be explicitly written in terms of confluent hypergeometric function.
\begin{itemize}
\item[(a)] $\Phi_{\mathrm{HG}} : \mathbb{C} \setminus \Sigma_{\mathrm{HG}} \rightarrow \mathbb{C}^{2 \times 2}$ is analytic, where $\Sigma_{\mathrm{HG}}$ is shown in Figure \ref{Fig:HG}.
\item[(b)] For $z \in \Gamma_{k}$ (see Figure \ref{Fig:HG}), $k = 1,\dots,6$, $\Phi_{\mathrm{HG}}$ obeys the jump relations
\begin{equation}\label{jumps PHG3}
\Phi_{\mathrm{HG},+}(z) = \Phi_{\mathrm{HG},-}(z)J_{k},
\end{equation}
where
\begin{align*}
& J_{1} = \begin{pmatrix}
0 & e^{-i\pi \beta} \\ -e^{i\pi\beta} & 0
\end{pmatrix}, \quad J_{2} = \begin{pmatrix}
1 & 0 \\ e^{i\pi\beta} & 1
\end{pmatrix}, \quad J_{3} = \begin{pmatrix}
1 & 0 \\ e^{-i\pi\beta} & 1
\end{pmatrix} \quad  \\
& J_{4} = \begin{pmatrix}
0 & e^{i\pi\beta} \\ -e^{-i\pi\beta} & 0
\end{pmatrix}, \quad J_{5} = \begin{pmatrix}
1 & 0 \\ e^{-i\pi\beta} & 1
\end{pmatrix}, \quad J_{6} = \begin{pmatrix}
1 & 0 \\ e^{i\pi\beta} & 1
\end{pmatrix}.
\end{align*}
\item[(c)] As $z \to \infty$, $z \notin \Sigma_{\mathrm{HG}}$, we have
\begin{equation}\label{Asymptotics HG}
\Phi_{\mathrm{HG}}(z) = \left( I +  \frac{\Phi_{\mathrm{HG},1}(\beta)}{z} + \bigO(z^{-2}) \right) z^{-\beta\sigma_{3}}e^{-\frac{z}{2}\sigma_{3}}\left\{ \begin{array}{l l}
\displaystyle e^{i\pi\beta  \sigma_{3}}, & \displaystyle \frac{\pi}{2} < \arg z <  \frac{3\pi}{2}, \\
\begin{pmatrix}
0 & -1 \\ 1 & 0
\end{pmatrix}, & \displaystyle -\frac{\pi}{2} < \arg z < \frac{\pi}{2},
\end{array} \right.
\end{equation}
where 
\begin{equation}\label{def of tau}
\Phi_{\mathrm{HG},1}(\beta) = \beta^{2} \begin{pmatrix}
-1 & \tau(\beta) \\ - \tau(-\beta) & 1
\end{pmatrix}, \qquad \tau(\beta) = \frac{- \Gamma\left( -\beta \right)}{\Gamma\left( \beta + 1 \right)}.
\end{equation}
In \eqref{Asymptotics HG}, the root is defined by $z^{\beta} = |z|^{\beta}e^{i\beta \arg z}$ with $\arg z \in (-\frac{\pi}{2},\frac{3\pi}{2})$.

As $z \to 0$, we have
\begin{equation}\label{lol 35}
\Phi_{\mathrm{HG}}(z) = \left\{ \begin{array}{l l}
\begin{pmatrix}
\bigO(1) & \bigO(\log z) \\
\bigO(1) & \bigO(\log z)
\end{pmatrix}, & \mbox{if } z \in II \cup V, \\
\begin{pmatrix}
\bigO(\log z) & \bigO(\log z) \\
\bigO(\log z) & \bigO(\log z)
\end{pmatrix}, & \mbox{if } z \in I\cup III \cup IV \cup VI.
\end{array} \right.
\end{equation}
\end{itemize}
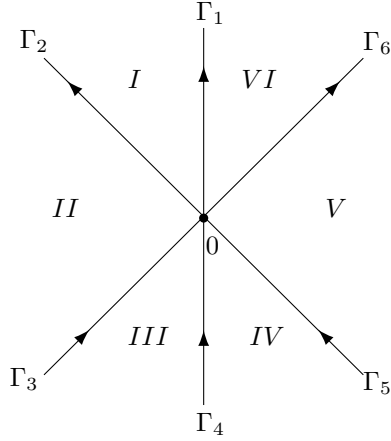
\begin{figure}[t!]
    \begin{center}
    \setlength{\unitlength}{1truemm}
    \begin{picture}(100,55)(-5,10)
             
        \put(50,39.8){\thicklines\circle*{1.2}}
        \put(50,40){\line(-0.5,0.5){21}}
        \put(50,40){\line(-0.5,-0.5){21}}
        \put(50,40){\line(0.5,0.5){21}}
        \put(50,40){\line(0.5,-0.5){21}}
        \put(50,40){\line(0,1){25}}
        \put(50,40){\line(0,-1){25}}
        
        \put(50.3,35){$0$}
        \put(71,62){$\Gamma_6$}
        \put(49,66){$\Gamma_1$}
        \put(25.8,62.3){$\Gamma_2$}        
        \put(24.5,17.5){$\Gamma_3$}
        \put(49,11.5){$\Gamma_4$}
        \put(71,17){$\Gamma_5$}
        
        \put(32,58){\thicklines\vector(-0.5,0.5){.0001}}
        \put(35,25){\thicklines\vector(0.5,0.5){.0001}}
        \put(68,58){\thicklines\vector(0.5,0.5){.0001}}
        \put(65,25){\thicklines\vector(-0.5,0.5){.0001}}
        \put(50,60){\thicklines\vector(0,1){.0001}}
        \put(50,25){\thicklines\vector(0,1){.0001}}
        \put(40,57){$I$}
        \put(30,40){$II$}
        \put(40,23){$III$}
        \put(56,23){$IV$}
        \put(66,40){$V$}
        \put(55,57){$VI$}
    \end{picture}
    \caption{\label{Fig:HG}The jump contour $\Sigma_{\mathrm{HG}}$. For each $k=1,\ldots,6$, the angle formed by $\Gamma_{k}$ and $(0,+\infty)$ is a multiple of $\frac{\pi}{4}$.}
\end{center}
\end{figure}
We will need the following more detailed asymptotics: as $z \to 0$, $z \in II$, we have
\begin{equation}\label{precise asymptotics of Phi HG near 0}
\Phi_{\mathrm{HG}}(z) = \begin{pmatrix}
\Psi_{11} & \Psi_{12} \\ \Psi_{21} & \Psi_{22}
\end{pmatrix} (I + \bigO(z)) \begin{pmatrix}
1 & \frac{\sin (\pi \beta)}{\pi} \log z \\
0 & 1
\end{pmatrix},
\end{equation}
where the argument of $\log z = \log |z| + i \arg z$ is such that $\arg z \in \big(\small{-}\frac{\pi}{2},\frac{3\pi}{2}\big)$, and
\begin{align*}
& \Psi_{11} = \Gamma(1-\beta), \qquad \Psi_{12} = \frac{1}{\Gamma(\beta)} \left( \frac{\Gamma^{\prime}(1-\beta)}{\Gamma(1-\beta)}+2\gamma_{\mathrm{E}} - i \pi \right), \\
& \Psi_{21} = \Gamma(1+\beta), \qquad \Psi_{22} = \frac{-1}{\Gamma(-\beta)} \left( \frac{\Gamma^{\prime}(-\beta)}{\Gamma(-\beta)} + 2\gamma_{\mathrm{E}} - i \pi \right),
\end{align*}
where $\gamma_{\mathrm{E}}$ is Euler's gamma constant.

\end{document}